\documentclass[a4paper,twoside]{article}
\usepackage{pdfsync}



\usepackage{amsmath,amsthm,amsfonts,latexsym,amscd,amssymb,enumerate}

\usepackage[pdftitle={The surgery exact sequence, K-theory and the
  signature operator},pdftex]{hyperref}

\setcounter{secnumdepth}{3}
\setcounter{tocdepth}{2}

\swapnumbers
\theoremstyle{plain}
\newtheorem{theorem}{Theorem}[section]
\newtheorem{lemma}[theorem]{Lemma}
\newtheorem{corollary}[theorem]{Corollary}
\newtheorem{proposition}[theorem]{Proposition}

\theoremstyle{definition}
\newtheorem{definition}[theorem]{Definition}
\newtheorem{example}[theorem]{Example}
\newtheorem{notation}[theorem]{Notation}

\newtheorem{set-up}[theorem]{Geometric set-up}
\newtheorem{remark}[theorem]{Remark}

\newcommand{\relC}[2]{C^*(#2\subset {#1})} 
\newcommand{\relD}[2]{D^*(#2\subset{#1})} 
\DeclareMathOperator{\coarseind}{Ind}

\newcommand{\reals}{\mathbb{R}}
\newcommand{\complexs}{\mathbb{C}}
\newcommand{\naturals}{\mathbb{N}}
\newcommand{\integers}{\mathbb{Z}}

\newcommand{\KK}{\mathbb{K}}

\DeclareMathOperator{\id}{id}
\newcommand{\boundary}[1]{\partial#1}

\newcommand{\abs}[1]{\left\lvert#1\right\rvert} 
\newcommand{\norm}[1]{\left\lVert#1\right\rVert}

\newcommand{\tensor}{\otimes}
\newcommand{\into}{\hookrightarrow}

\newcommand{\iso}{\cong}

\newcommand{\disjointunion}{\sqcup}

\DeclareMathOperator{\supp}{supp}   
\DeclareMathOperator{\im}{im}      

\DeclareMathOperator{\spin}{spin}

\DeclareMathOperator{\Pos}{Pos} 
\DeclareMathOperator{\spec}{Spec}

\DeclareMathOperator{\Ind}{Ind}


\newcommand{\forget}[1]{}

\def  \nuint {\raise10pt\hbox{$\nu$}\kern-6pt\int}

\def \L{\mathcal L}

\newcommand\C{\mathcal C}

\def \L {{\cal L}}
\def \Sp {{\cal S}}

\def\Id{{\rm Id}}

\newcommand\D{\mathcal D}
\newcommand\Di{D\kern-6pt/}
\newcommand\cDi{{\mathcal D}\kern-6pt/}
\newcommand\spi{S\kern-6pt/}
\newcommand \cspi{\Sp\kern-6pt/}

\newcommand\CC{\mathbb C}

\def \cal {\mathcal}
\def \C {{\cal C}}

\def \BB {\mathbb B}

\newcommand\RR{\mathbb R}
\newcommand\ZZ{\mathbb Z}

\newcommand\pa{\partial}

 \usepackage{color}
\definecolor{darkgreen}{cmyk}{1,0,1,.2}
\definecolor{m}{rgb}{1,0.1,1}


\marginparwidth 0pt
\oddsidemargin  0pt
\evensidemargin  0pt
\marginparsep 0pt
\topmargin   0pt
\textwidth 6.5 in 
\textheight 8.5 in

{\catcode`@=11\global\let\c@equation=\c@theorem}




\allowdisplaybreaks[2]

\begin{document}
\pagestyle{myheadings}
\markboth{Paolo Piazza and Thomas Schick}{Surgery sequence, K-theory  and the signature  operator}


\title{The surgery exact sequence, K-theory  and the signature  operator}

\author{Paolo Piazza and Thomas Schick}
\maketitle

\begin{abstract}
The {main result} of this paper is a new and direct proof of the natural
transformation from the surgery exact sequence in topology to the analytic
K-theory sequence of Higson and Roe.

Our approach makes crucial use of analytic properties and new index
theorems for the signature operator
on Galois coverings  with boundary. These are of independent interest and 
form the second main theme
of the paper. The main technical novelty is the use of large scale index
theory for 
Dirac type operators that are perturbed by lower order operators.
\end{abstract}

\tableofcontents

\section{Introduction}
Let $V$ be a smooth, closed, oriented $n$-dimensional manifold.
We consider $\Gamma:=\pi_1 (V)$ and the universal cover $\widetilde{V}\to V$.
Finally, we let $\tilde{u}\colon \widetilde{V}\to E\Gamma$ be 
a $\Gamma$-equivariant
map covering a classifying map $u\colon V\to B\Gamma$ for $\widetilde{V}\to V$.
Associated to this data there are two important exact sequences.

\medskip
The first one, due to Browder, Novikov, Sullivan and Wall, is the surgery exact sequence
in topology \cite{MR2000a:57089}, \cite{ranicki-ags}, \cite{lueck-survey}:
\begin{equation}\label{wall-sequence}
\dots\rightarrow L_{n+1}(\ZZ\Gamma)  \dashrightarrow \mathcal{S}(V) \rightarrow
 \mathcal{N}(V) \rightarrow  L_{n}(\ZZ\Gamma)\,.
\end{equation}
The central object of interest in this sequence is the {\em structure set} $\mathcal{S}(V)$; elements in  the set $\mathcal{S}(V)$ are given by homotopy manifold structures on $V$, i.e. orientation preserving homotopy equivalences 
$f\colon M\rightarrow V$, with $M$ a smooth oriented closed manifold, considered
up to $h$-cobordism.  $\mathcal{N}(V)$ is the set of degree one normal maps 
$f:M\to V$ considered up to normal bordism. Finally, the abelian 
groups $L_* (\ZZ\Gamma)$, the $L$-groups of the integral group ring
$\ZZ\Gamma$, are defined
algebraically but have a geometric realization as cobordism groups 
of manifolds with boundary with additional structure on the boundary.

The surgery exact sequence \eqref{wall-sequence} plays a fundamental role in the classification
of high-dimensional smooth compact manifolds.

\medskip
The second exact sequence associated to $\widetilde{V}\to V$ is purely analytic
and  is due to Higson and Roe. Consider  the $C^*$-algebra
$C^*(\widetilde{V})^\Gamma$ called the Roe algebra and obtained as the closure of the $\Gamma$-equivariant locally compact finite propagation operators; this is an ideal in
$D^*(\widetilde{V})^\Gamma$, the $C^*$-algebra obtained as the closure 
 of the $\Gamma$-equivariant pseudolocal finite propagation
operators. There is a short exact sequence of $C^*$-algebras 
$$0\rightarrow  C^*(\widetilde{V})^\Gamma\rightarrow 
D^*(\widetilde{V})^\Gamma\rightarrow 
D^*(\widetilde{V})^\Gamma/C^*(\widetilde{V})^\Gamma\rightarrow 0$$ and thus a 6-terms long exact sequence in K-theory
$$\cdots\rightarrow K_{n+1}  (C^*(\widetilde{V})^\Gamma)
\rightarrow K_{n+1}  (D^*(\widetilde{V})^\Gamma)
\rightarrow K_{n+1}  (D^*(\widetilde{V})^\Gamma/C^*(\widetilde{V})^\Gamma)
\xrightarrow{\pa} K_n (C^*(\widetilde{V})^\Gamma)\rightarrow \cdots$$
There are canonical isomorphisms,
$$K_{*+1}  (D^*(\widetilde{V})^\Gamma/C^*(\widetilde{V})^\Gamma)= K_{*} (V)
\quad\text{and}\quad  K_{*}  (C^*(\widetilde{V})^\Gamma)=K_* (C^*_r \Gamma)$$
with $C^*_r \Gamma$ denoting, as usual, the reduced $C^*$-algebra of the
group $\Gamma$.
Thus we can rewrite the long exact sequence in K-theory as
\begin{equation}\label{HR}
\cdots\rightarrow K_{n+1}  (C^*_r \Gamma)
\rightarrow K_{n+1}  (D^*(\widetilde{V})^\Gamma)
\rightarrow K_{n}  (V)
\rightarrow K_n (C^*_r \Gamma)\rightarrow \cdots
\end{equation}
This is the {\em analytic surgery sequence} of Higson and Roe. 

By \cite{roe-bc_coarse}, the  map 
$K_* (V)\rightarrow K_* (C^*_r \Gamma)$  is precisely  
the (Baum-Connes) assembly map. This connects the Higson-Roe surgery sequence
to fundamental questions such as the Strong Novikov Conjecture or the
Baum-Connes conjecture.

In a series of papers \cite{higson-roeI,higson-roeII,higson-roeIII} 
Higson and Roe constructed the following remarkable commutative diagram:
       \begin{equation}\label{HRdiagram}
  \begin{CD}
   L_{n+1}(\ZZ\Gamma) &\dashrightarrow& \mathcal{S}(V) @>>> \mathcal{N}(V) @>>> L_{n}(\ZZ\Gamma)\\
   @VV{\gamma}V  @VV{\alpha}V @VV{\beta}V 
   @VV{\gamma}V \\
   K_{n+1} ( C^*_r\Gamma)[\frac{1}{2}]  @>>>  K_{n+1}(D^* (\widetilde{V})^\Gamma )[\frac{1}{2}]
   @>>>K_{n} (V)[\frac{1}{2}] @>>>  K_{n} ( C^*_r\Gamma)[\frac{1}{2}] \\
    \end{CD}
    \end{equation}
{\em where $A[\frac{1}{2}]$ is a shorthand for
$A\tensor_\integers\integers[\frac{1}{2}]$ if $A$ is any abelian group. }


It is important to mention that both $\alpha$ and 
$\gamma$ in \eqref{HR} are constructed using fine properties of Poincar\'e
spaces that are
\emph{not} smooth manifolds. The main ingredient is the use of homotopy
equivalences to glue manifolds with boundary along their boundaries. The
resulting objects are \emph{not} manifolds, but still Poincar\'e
complexes. These have well defined (higher) signatures which then feature in
the construction.


{\em One main  goal of this article 
is to give an alternative and direct route to the transformation from the
smooth surgery exact 
sequence in topology to the analytic surgery sequence of Higson-Roe.
The main point of our construction is that we use index theoretic
constructions, applied to the signature operator, throughout.}

Our approach follows the one presented in our recent paper
 \cite{PS-Stolz}, where we showed how to
map the exact sequence of Stolz for positive scalar curvature metrics
to the Higson-Roe exact sequence. Throughout, we will follow the notations
of our  paper \cite{PS-Stolz}, the current paper should be considered as a
companion to the latter one, with the signature operator replacing the Dirac
operator in a fundamental 
way.

\smallskip
Our main result, Theorem \ref{theo:main}, is that there are natural (index
theoretic) maps $\Ind_\Gamma, \rho,\beta$ 
    
    \begin{equation}\label{eq:our_diag}
  \begin{CD}
   L_{n+1}(\ZZ\Gamma) &\dashrightarrow & \mathcal{S}(V) @>>> \mathcal{N}(V) @>>> L_{n}(\ZZ\Gamma)\\
   @VV{\Ind_\Gamma}V  @VV{\rho}V @VV{\beta}V 
   @VV{\Ind_\Gamma}V \\
   K_{n+1} ( C^*_r\Gamma)[\frac{1}{2}]  @>>>  K_{n+1}(D^*(\tilde V)^\Gamma
   )[\frac{1}{2}] 
   @>>>K_{n} (V)[\frac{1}{2}] @>>>  K_{n} ( C^*_r\Gamma)[\frac{1}{2}] \\
    \end{CD}
    \end{equation}
%
%
%
making the diagram commutative.

The main technical novelty, compared to the companion paper \cite{PS-Stolz},
is 
that we have to use an orientation preserving homotopy $f\colon M\to V$ to
perturb the signature operator (on the disjoint union of $M$ and $V$) to an invertible
operator. {We follow an explicit recipe for such a perturbation $C_f$ initiated
by 
Hilsum-Skandalis in their fundamental work \cite{HiSka}, and then modified by Piazza and Schick in \cite{PS1} and further modified by Wahl in \cite{Wahl_higher_rho}.
We will use the latter version  of this perturbation and for this reason
we name it Wahl's
perturbation. One advantage of choosing  Wahl's perturbation is that
it makes it possible to extend most of the results
of Hilsum and Skandalis  from closed manifolds to manifolds with boundary. See 
\cite[Theorem 8.4]{Wahl_higher_rho}.}

Notice that the perturbed
operator looses several of the appealing properties of a Dirac type operator
(like unit propagation of the associated wave operator). 
{Therefore, the second main theme of this work is the proof of a new coarse
 index theorem for the signature operator
on Galois coverings  with boundary. This is of independent interest.} The main technical novelty is the use of large scale index
theory for Dirac type operators that are perturbed by lower order operators.

\smallskip
More explicitly, in \eqref{eq:our_diag} the map $\rho$ is defined applying the
following idea: if $M\xrightarrow{f} V$ is a homotopy equivalence 
then $\rho [ M\xrightarrow{f} V]$ is defined in terms of the
projection onto the positive part of the spectrum of the self-adjoint
invertible operator $D+C_f$, with $D$
the signature operator on the Galois covering defined by 
$(f\circ u) \disjointunion (-u)\colon  M\disjointunion (-V)\to B\Gamma$ and
$C_f$  {is Wahl's perturbation} defined by $f$.

\smallskip
A geometrically given cycle for $L_n(\integers\Gamma)$ consists in particular
of a manifold with boundary (made up of two components), and the extra datum
of a homotopy equivalence of the boundaries of the pieces.
{Wahl's}
perturbation for this homotopy equivalence can then be used
to perturb the signature operator to be invertible at the boundary. 
%
This allows for the definition of a generalized Atiyah-Patodi-Singer index class
{$\Ind(D,f)$} in $K_{n+1} (C^*_r \Gamma)$, which in the end defines the map $\Ind_\Gamma$.

\smallskip
Finally, $\beta$ is defined 
as in Higson and Roe: if $f\colon M\rightarrow V$ defines a class
in $\mathcal{N} (V)$ then its image through $\beta$
is obtained as $f_* ([D_M])- [D_V]\in K_n (V)$, with $[D_M]$
and $[D_V]$ the fundamental classes associated to the signature operators on the smooth compact manifolds
$M$ and $V$.


\smallskip
We shall first treat the case in which $V$ is odd
dimensional and only at the end indicate how the results of this paper 
and of \cite{PS-Stolz} can also be extended to the even dimensional case.

\smallskip
It should be added that the map $\Ind_\Gamma$
of our diagram, the one out of $L_*(\ZZ\Gamma)$, has 
already been constructed by Charlotte Wahl in \cite{Wahl_higher_rho}. We 
will make use of her important results for parts of our program. 
{The main novelty in the approach presented in this paper and in
\cite{Wahl_higher_rho}, compared 
to \cite{higson-roeI,higson-roeII,higson-roeIII},  is therefore the definition
of the map $\Ind_\Gamma$, the definition 
of $\rho$ and, crucially, the proof of well-definedness and of
commutativity of the squares. }

All this we prove by establishing and then employing
 a delocalized Atiyah-Patodi-Singer index theorem for
\emph{perturbed} Dirac operators:

\begin{theorem}[Theorem \ref{theo:k-theory-deloc}]
Let $W$ be an oriented manifold with free cocompact orientation preserving
action and with boundary $M_1\disjointunion M_2$. Let $f\colon M_1\to M_2$ be
an orientation preserving $\Gamma$-equivariant homotopy equivalence. Then
\begin{equation*}
\iota_* ( \Ind(D,f) )= j_*(\rho( D_{\pa} + C_{f})) \quad\text{in}\quad K_{0}
(D^*(\widetilde{W})^\Gamma).
\end{equation*}

Here, $D$ is the signature operator on $W$, {$\Ind(D,f)$ is the generalized
APS index class associated to $D$ and to the homotopy equivalence $f$,
 $j\colon D^*(\boundary
\widetilde{W})^\Gamma\to 
D^*(\widetilde{W})^\Gamma$ is the homomorphism induced by the inclusion 
$\boundary \widetilde{W}\to \widetilde{W}$} and $\iota\colon
C^*(\widetilde{W})^\Gamma\to D^*(\widetilde{W})^\Gamma$ the inclusion.

\end{theorem}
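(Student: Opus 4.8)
The plan is to establish the identity by a delocalized Atiyah--Patodi--Singer (APS) argument, adapting the closed-manifold strategy of the companion paper \cite{PS-Stolz} to the perturbed signature operator on the manifold with boundary $W$. The key conceptual point is that $\Ind(D,f)$ should be realized, up to the boundary map, as the large-scale index of a Dirac-type operator on the cylinder-completed covering $\widetilde{W}_\infty = \widetilde{W} \cup_{\boundary\widetilde{W}} (\boundary\widetilde{W}\times[0,\infty))$, where the signature operator $D$ is perturbed by a family $C_{f,t}$ that on the cylindrical end equals the (time-independent) invertible boundary perturbation $D_{\boundary} + C_f$. The fact that $D_{\boundary}+C_f$ is invertible --- which is exactly the content of the Hilsum--Skandalis--Wahl construction, cf. \cite{HiSka,PS1,Wahl_higher_rho} and in particular \cite[Theorem 8.4]{Wahl_higher_rho} --- is what makes this perturbed operator on $\widetilde{W}_\infty$ invertible modulo $C^*(\widetilde{W})^\Gamma$ and hence gives a well-defined coarse index class $\coarseind(D + C_f) \in K_0(C^*(\widetilde{W})^\Gamma)$, which by definition is (a model for) $\Ind(D,f)$.

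First I would set up the relevant mapping-cone/relative $K$-theory picture: the pair $(D^*(\widetilde{W})^\Gamma, C^*(\widetilde{W})^\Gamma)$ with its boundary map $\pa$, and likewise for $\boundary\widetilde{W}$, and recall that for the \emph{invertible} self-adjoint operator $D_{\boundary}+C_f$ the class $\rho(D_{\boundary}+C_f) \in K_1(D^*(\boundary\widetilde{W})^\Gamma)$ is represented by the projection onto its positive spectrum (via the standard $D^*$-model of $\rho$-classes). The heart of the proof is then a \emph{coarse relative index theorem}: one shows that, in $K_0(D^*(\widetilde{W})^\Gamma)$, the image $\iota_*(\coarseind(D+C_f))$ of the index class coming from the interior equals $j_* \pa^{\boundary}[\text{something on the boundary}]$, and then identifies that boundary contribution with $j_*\rho(D_\boundary + C_f)$. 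Concretely this is done by the usual APS-type excision/deformation: cut $\widetilde{W}_\infty$ along the boundary, use that on the cylinder the perturbed operator is the suspension of the invertible $D_\boundary + C_f$, and invoke the compatibility of the boundary map in the Higson--Roe sequence with the $\pa$-map in the coarse mapping-cone sequence. All of this is the perturbed analogue of the corresponding step in \cite{PS-Stolz}; the new input needed is that the perturbation $C_f$ is a \emph{lower-order} (in fact $0$-th order, bounded, finite-propagation after smoothing) operator, so that $D + C_f$ still has a well-behaved functional calculus at the level of the coarse algebras --- finite propagation is only approximately preserved, so one must work with the large-scale index theory of \emph{perturbed} Dirac operators, controlling propagation by the standard trick of writing functions of $D+C_f$ via compactly-supported-Fourier-transform functions plus a $C^*(\widetilde{W})^\Gamma$ remainder.

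The main obstacle, as the authors themselves flag, is precisely that the wave operator of $D + C_f$ does \emph{not} have unit (or even finite) propagation, so the clean finite-propagation arguments available for honest Dirac operators on coverings do not apply verbatim. I would handle this by establishing, as a preliminary lemma (presumably proved earlier in the paper, which I may assume), that for Schwartz functions $\varphi$ the operator $\varphi(D+C_f)$ can be approximated in norm by finite-propagation operators modulo $C^*(\widetilde{W})^\Gamma$, with the approximation controlled uniformly enough to run the Mayer--Vietoris / relative-index bookkeeping; once one has this, the perturbed operator behaves, \emph{at the level of $K$-theory classes in the coarse algebras}, just like an unperturbed one, and the APS deformation argument of \cite{PS-Stolz} goes through. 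A secondary technical point is to check carefully that the two boundary perturbations match up: the perturbation $C_{f,\boundary}$ induced on the boundary of $W$ by the chosen interior perturbation must agree (up to a path of invertibles, hence not changing the $\rho$-class) with the perturbation $C_f$ used in the definition of $\rho(D_\boundary + C_f)$ --- this is where Wahl's product formula \cite[Theorem 8.4]{Wahl_higher_rho} is essential, since it guarantees the perturbation restricts correctly to the cylindrical end. Assembling these, one gets $\iota_*(\Ind(D,f)) = j_*\rho(D_\boundary + C_f)$ in $K_0(D^*(\widetilde{W})^\Gamma)$, as claimed.
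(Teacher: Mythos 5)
Your overall architecture matches the paper's: pass to the cylinder-completed covering $\widetilde{W}_\infty$, define the coarse index class of the grafted perturbed operator, reduce the identity to a statement on the half-cylinder $[0,\infty)\times\pa\widetilde{W}$, and identify the boundary contribution with $\rho(D_\pa+C_f)$. However, there are two genuine gaps. The first and more serious one is that the cylinder step --- the ``cylinder delocalized index theorem for perturbed operators'' --- is exactly where the content of the theorem lies, and you dispose of it by saying the perturbed operator on the cylinder is ``the suspension of the invertible $D_\pa+C_f$'' and invoking ``compatibility of the boundary maps.'' That is the statement to be proved, not a proof. In the paper this step requires constructing an explicit isometry $V\colon \L^2(\pa\widetilde{W})\to \L^2([0,\infty)\times\pa\widetilde{W})$, $(Vs)(t)=\sqrt{2|D_\pa+C_\pa|}\,e^{-t|D_\pa+C_\pa|}s$, proving that $V$ covers the inclusion of the boundary in the $D^*$-sense (a delicate approximation argument comparing $V$ with its unperturbed analogue $V_D$ via polynomials in $(1+t^2)^{-1}$), proving that $\frac{|D_\pa+C_\pa|+\pa_t}{(D_\pa+C_\pa)-\pa_t}$ lies in $D^*(\RR\times\pa\widetilde{W})^\Gamma$ (which needs Sobolev-space mapping properties and Rellich-type compactness for the perturbed operator), and then computing a parametrix $Q$ whose defect projection is $V\chi_{[0,\infty)}(D_\pa+C_\pa)V^*$. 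None of this is routine for a perturbed operator, and your proposal contains no substitute for it.

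The second gap is your proposed mechanism for handling the loss of finite propagation: approximating functions of $D+C_f$ ``via compactly-supported-Fourier-transform functions plus a $C^*(\widetilde{W})^\Gamma$ remainder.'' The Fourier-inversion trick rests on unit propagation speed of the wave group, which is precisely what $D+C_f$ does \emph{not} have; applying $g(D+C_f)$ for $g$ with compactly supported Fourier transform gives you no propagation control at all. The paper circumvents this by never using the wave operator of the perturbed operator: it compares $f(D+B)$ with the translation-invariant model $f(\bar D+\bar C)$ on the full cylinder through the resolvent identity and a Neumann-series expansion of $(D+B+i\lambda)^{-1}$ in terms of $(D+i\lambda)^{-1}$, reducing everything to the unperturbed case (where finite propagation is available) plus the support hypothesis $P_0B P_0-P_0C_\infty P_0\in C^*(\widetilde{W}\subset\widetilde{W}_\infty)^\Gamma$. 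You would need to replace your Fourier-transform step by an argument of this kind for the proof to go through.
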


{It should be added that this result 
generalizes to general Dirac type operators with more abstract boundary
perturbations making the boundary operator invertible. See
Theorem \ref{theo:k-theory-deloc} for a precise statement. }

The main novelty in Theorem \ref{theo:main}, {i.e.~the construction and the commutativity
of the  diagram
\eqref{eq:our_diag},
 compared to the analogous result in \cite{PS-Stolz},} is the treatment of
the technicalities which arise when dealing with perturbed Dirac type
operators.

\begin{remark}
By using the signature operator of Hilsum and Teleman
it is possible to extend our results to Lipschitz manifolds and thus,
by Sullivan's theorem, to the surgery exact sequence in the category TOP of
topological manifolds. {We refer 
the reader to Zenobi's
\cite{zenobi} where stability results for the topological structure set under
products are also discussed.}
\end{remark}

\begin{remark}
{ Because of their fundamental result,
Higson and Roe name the group $K_{0} (D^*_\Gamma)$
 the {\it analytic structure group}; they denote it $\mathcal{S}_1(\Gamma)$.
Similarly, the analytic structure group $\mathcal{S}_0 (\Gamma)$ is, by definition,
the group $K_{1} (D^*_\Gamma)$.
In \cite{DGof1}, Deeley and Goffeng have introduced a  geometric 
 structure group $\mathcal{S}^{{\rm geo}}_1(\Gamma)$, with cycles
 defined \`a la Baum-Douglas.
 Using the results of the present paper, Deeley and Goffeng have proved that their geometric  
definition is isomorphic to the analytic one given by Higson and Roe.}
\end{remark}


\bigskip
\noindent
{\bf Acknowledgments.} We are glad to thank Georges Skandalis, 
Stephane Vassout, Charlotte Wahl, Rudolf Zeidler and Vito Felice Zenobi for interesting discussions.
We thank the anonymous referee for a very careful reading of the original
manuscript and for valuable suggestions.
P.P.~thanks the \textit{Projet Alg\`ebres d'Op\'erateurs} of \emph{Institut de
  Math\'ematiques de Jussieu} for hospitality and financial support
while part of this research was carried out; he also thanks
{Mathematisches Institut} and the Courant Research Center ``Higher order structures in
mathematics'' for their hospitality for several visits to G\"ottingen;
the  financial support 
of {\em Ministero dell'Universit\`a e della Ricerca Scientifica} (through the
project ``Spazi di Moduli e Teoria di Lie'') is also gratefully acknowledged.
T.S.~acknowledges the support of the Courant Research Center ``Higher order
structures in mathematics''.

\section{Index and rho classes defined by perturbations}
\label{sec:indexdef}

\subsection{The index homomorphism in L-theory}\label{sub:L-map}
The map $\Ind_\Gamma\colon L_{n+1}(\ZZ\Gamma) \to K_{n+1} (C^*_r\Gamma)$ has
been defined 
by Wahl, building on results of Hilsum-Skandalis \cite{HiSka} and the authors
\cite{PS1}. We briefly describe it. Assume that $n+1$ is even. 
Recall, see for example \cite[Chapter 4]{higson-roeIII}
and the references therein, that an element $x\in
L_{n+1}(\ZZ\Gamma)$ is represented by a quadruple $(W,F,X\times [0,1],u\colon
X\to B\Gamma)$ 
with $W$ a cobordism between two smooth orientable manifolds $\pa_1 W$ and $\pa_2 W$, $X$ a smooth orientable manifold,
$F\colon (W, \pa W) \to (X\times [0,1], \pa(X\times [0,1]))$ a degree one normal map of pairs, $f_1:=F\,|_{\pa_1 W}$ and $f_2:=F\,|_{\pa_2 W}$ oriented homotopy equivalences
and $u\colon X\to B\Gamma$ a classifying map. Let $f=f_1 \disjointunion f_2$ denote the restriction of $F$ to $\pa W$. Consider 
$Z:= W\disjointunion X\times [0,1]$, a manifold with boundary. Let $\mathcal{D}_Z$ be the signature operator on $Z$
with coefficients in the Mishchenko bundle defined by $u\colon X\to B\Gamma$
and $u\circ F\colon W\to B\Gamma$, i.e.~the bundle obtained as pullback of
the $C^*_{r}\Gamma$-module bundle $E\Gamma\times_\Gamma C^*_{r}\Gamma$
over $B\Gamma$. Then, {proceeding as in  \cite{Wahl_higher_rho}}, we can construct a 
smoothing perturbation $\mathcal{C}_f$
of the boundary operator $\mathcal{D}_{\pa Z}$ with the property
that $\mathcal{D}_{\pa Z} + \mathcal{C}_f$ is invertible. This perturbation, a smoothing
version of the original one defined by Hilsum and Skandalis in \cite{HiSka},
 is, first of all, a {self-adjoint} bounded operator 
on the $C^*_r \Gamma$-Hilbert module $\mathcal{E}_{{\rm M}}:= L^2
(Z, \Lambda^* Z\otimes \mathcal{F}_{{\rm M}})$ with $\mathcal{F}_{{\rm M}}$
denoting
the Mishchenko bundle. {Moreover,
it is  an element in} $\Psi^{-\infty}_{C^*_r \Gamma} (Z, \Lambda^* Z\otimes \mathcal{F}_{{\rm M}})$, the smoothing operators in the Mishchenko-Fomenko calculus.
Wahl's perturbation $\mathcal{C}_f$ is an example of what is called, in the literature,
a smoothing trivializing perturbation; more generally a
{\it trivializing perturbation} for $\D$ is a self-adjoint $C^*_r \Gamma$-bounded operator $\C$ with the property that $\D + \C$
is invertible as an unbounded operator on the Hilbert module $\mathcal{E}_{{\rm M}}$.
The original perturbation defined by  Hilsum-Skandalis is an example of such a perturbation, see \cite[Proposition 3.1]{zenobi}
for a proof.
Notice that in this paper only the smoothing trivializing perturbation $\mathcal{C}_f$ will be used; this corresponds to the choice 
$\epsilon\in (0,+\infty)$ in Wahl's treatment.\\
%
%
We extend Wahl's perturbation
$\mathcal{C}_f$ in the obvious way to the cylinder $\RR\times \pa Z$
(we extend it to be constant in the cylindrical direction) and then use a 
cut-off function in order to graft  this operator to the manifold with cylindrical 
end $Z_\infty$ associated to $Z$. We denote this
global perturbation by $\C_{f,\infty}$;
this is  the global perturbation
chosen by Wahl and it is the one we shall take. (In previous work on higher APS
index theory the global perturbation $\C_{f,\infty}$ was chosen to be $b$-pseudodifferential, see
\cite{MPI} \cite{LPGAFA};
while this choice would simplify some of our arguments in Section  \ref{subsub:coarse-classes},
it would eventually make the proof of our main theorem more involved; this is why we have chosen
the perturbation just explained.)
Proceeding as in 
\cite{LPGAFA} and  \cite[Theorem 10.1]{LLP} one proves that 
there is a well defined index class
associated to  $\mathcal{D}_{\infty}+\mathcal{C}_{f,\infty}$,
with $\D$ denoting the Mishchenko-Fomenko signature operator. The index class
is an element  in $K_{n+1} (C^*_r \Gamma)$. 
See Section \ref{subsub:compatibility} below for further details.
Thus, to 
the  quadruple  $(W,F,X\times [0,1],u\colon X\to B\Gamma)$ we associate 
$\Ind (\mathcal{D}_{\infty}+\mathcal{C}_{f,\infty})\in K_{n+1} (C^*_r \Gamma)$.

\begin{theorem}\label{theo:charlotte}
This construction  induces a well defined group homomorphism 
\begin{equation*}
\Ind_\Gamma\colon L_{n+1}(\ZZ\Gamma) \rightarrow 
K_{n+1} (C^*_r \Gamma)
\end{equation*}
\end{theorem}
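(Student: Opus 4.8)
The plan is to verify that the assignment $x=(W,F,X\times[0,1],u)\mapsto \Ind(\mathcal D_\infty+\mathcal C_{f,\infty})\in K_{n+1}(C^*_r\Gamma)$ is (i) independent of the auxiliary choices entering the construction and (ii) compatible with the equivalence relation defining $L_{n+1}(\ZZ\Gamma)$, and finally (iii) additive for disjoint union, which gives the group homomorphism property. For the well-definedness of the index class itself (independence of the cut-off function used to graft $\mathcal C_f$ onto $Z_\infty$, of the Riemannian metric, and of the particular smoothing perturbation $\mathcal C_f$ within Wahl's family), I would invoke the bordism-invariance and relative-index techniques already developed in \cite{LPGAFA} and \cite[Theorem 10.1]{LLP}: two choices are connected by a perturbed operator on $Z_\infty\times[0,1]$ whose boundary operators are both invertible, so its index class vanishes and the two classes at the ends agree. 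Wahl's work \cite{Wahl_higher_rho} supplies the homotopy through invertible boundary perturbations when one varies the homotopy equivalence $f$ within its homotopy class, and the contractibility of Wahl's space of admissible perturbations handles the choice of $\epsilon$ and of the smoothing data.

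Next I would check invariance under the $L$-theoretic equivalence relation. An element of $L_{n+1}(\ZZ\Gamma)$ is trivial precisely when the quadruple bounds: there is a degree one normal map of pairs over $(Y\times[0,1]\times[0,1])$ restricting to $F$ on one face and to homotopy equivalences on the remaining faces. Gluing this bounding object to $Z_\infty$ along the relevant boundary piece produces a cocompact manifold with boundary whose boundary signature operator has been perturbed (via Wahl's perturbation for the boundary homotopy equivalences) to an invertible operator, and whose interior carries no closed "new" boundary — so by the APS-type vanishing for perturbed operators (again the mechanism of \cite{LPGAFA,LLP}, now for manifolds with corners, smoothed in the standard way) the associated index class is zero. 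Hence $\Ind_\Gamma(x)=0$. The same gluing argument run on two representatives of the same class, taking their "difference", shows the map descends to $L_{n+1}(\ZZ\Gamma)$; and since the group operation on $L$-theory is realized by disjoint union (or boundary connected sum) of representatives, while the index class is plainly additive under disjoint union of the underlying perturbed operators, the map is a homomorphism.

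The main obstacle, and the reason this is not purely formal, is that the grafted global perturbation $\mathcal C_{f,\infty}$ is \emph{not} a $b$-pseudodifferential operator and destroys unit propagation of the wave operator; the classical APS machinery on manifolds with cylindrical ends — existence of the index class, its bordism invariance, the relative index theorem — must therefore be re-established in the presence of a lower-order, non-pseudodifferential but $C^*_r\Gamma$-smoothing perturbation that is only asymptotically translation-invariant on the end. Concretely, one must show the perturbed operator $\mathcal D_\infty+\mathcal C_{f,\infty}$ is $C^*_r\Gamma$-Fredholm in the Mishchenko--Fomenko sense, that its index class is stable under compactly supported perturbations of $\mathcal C_{f,\infty}$, and that a cobordism between two such data yields a perturbed operator on the cobordism whose boundary operators are invertible so that its index vanishes. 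This is exactly the "large scale index theory for Dirac type operators perturbed by lower order operators" advertised in the introduction; the estimates controlling the perturbation near the cylindrical end (so that invertibility of $\mathcal D_{\pa Z}+\mathcal C_f$ genuinely forces the cylinder-model operator to be invertible) are the technical heart, and once they are in place the three bullet points above follow by the usual formal arguments.
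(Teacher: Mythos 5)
Your outline is correct and matches the substance of the argument: the paper's own ``proof'' is simply the citation of Wahl's Theorem 9.1 in \cite{Wahl_higher_rho}, and what you sketch (independence of choices via homotopies of invertible perturbed operators, descent to $L_{n+1}(\ZZ\Gamma)$ via an APS-type vanishing for a bounding quadruple, additivity under disjoint union, with the real work being Fredholmness and bordism invariance for the non-pseudodifferential grafted perturbation on the cylindrical end) is exactly the content of Wahl's proof together with the supporting results the paper itself develops (Proposition \ref{prop:stab-index-class} and the material of Section \ref{subsub:coarse-classes}). No gap; you have essentially reconstructed the argument the paper delegates to the reference.
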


\begin{proof}
This is proved by Charlotte Wahl in \cite[Theorem 9.1]{Wahl_higher_rho}.
\end{proof}
We give more information on this index class in Section
\ref{subsub:coarse-classes}.




\subsection{Rho classes}\label{sub:coarse-index}

In this subsection we first fix a 
 $\Gamma$-manifold $\widetilde{V}$ with a free
cocompact action of $\Gamma$ with quotient $V$, a smooth compact
manifold without boundary. We fix a $\Gamma$-invariant metric
on $\widetilde{V}$; we also fix a $\Gamma$-equivariant hermitian 
vector bundle $\widetilde{E}$ on $\widetilde{V}$ with quotient $E$ on $V$. 
We assume the existence of
a $\Gamma$-equivariant Clifford structure on $\widetilde{E}$ and we denote by $D$
the corresponding Dirac type operator on $\widetilde{V}$; this is a $\Gamma$-equivariant
operator. Notice that we {\it do not} employ
    the tilde-notation for the operators on the covering.
    We denote by $\D$ the induced operator in the Mishchenko-Fomenko 
calculus.

Recall the main players in the Higson-Roe surgery sequence.
We have the $C^*$-algebra
$C^*(\tilde V)^\Gamma$, called  Roe algebra, obtained as the closure  of the
locally compact $\Gamma$-invariant finite propagation operators,  and  we have
the $C^*$-algebra 
$D^*(\tilde V)^\Gamma$, obtained as the closure of  $\Gamma$-invariant pseudolocal finite propagation
operators. 
Recall also that $C^*(\tilde V)^\Gamma$ is an ideal in $D^*(\tilde V)^\Gamma$.
We refer to the companion
paper \cite{PS-Stolz} for the precise definitions and for the notation we adopt.
One of the extra subtleties is that one has to stabilize the bundles by
tensoring with $l^2(\naturals)$ for the definition in particular of
$D^*(\tilde V)^\Gamma$ and let operators act on one corner $V\tensor \complexs
e_1\subset V\tensor l^2(\naturals)$. See for example  \cite[Section 1.2]{PS-Stolz}.
We will suppress this throughout in the
notation.


\subsubsection{Operators on the covering}

Recall from \cite{PS-Stolz} that given a cocompact Galois covering
$\widetilde{V}\to V$ there is an isomorphism 
\begin{equation}\label{c-star-iso}
 \KK (\mathcal{E}_{{\rm M}})\cong C^* (\widetilde{V})^\Gamma \end{equation}
 where we recall that $\mathcal{E}_{{\rm M}}$ stands for the 
 Mishchenko $C^*_r \Gamma$-Hilbert module
 $L^2 (V,E\otimes \mathcal{F}_{{\rm M}})$, with $\mathcal{F}_{{\rm M}}=\widetilde{V}\times_\Gamma C^*_r \Gamma$, the Mishchenko bundle.
  Consider now $V$ and $\widetilde{V}$ as above. Let 
 $\mathcal{C}\in \Psi^{-\infty}_{C^*_r \Gamma} (V, E\otimes \mathcal{F}_{{\rm M}})$
 be a   smoothing trivializing perturbation for a Dirac type operator $\D
 \in {\rm Diff}^1_{C^*_r \Gamma} (V, E\otimes \mathcal{F}_{{\rm M}})$.
  Obviously, from the Mishchenko-Fomenko calculus, we have that
 $\Psi^{-\infty}_{C^*_r \Gamma} (V,E\otimes \mathcal{F}_{{\rm M}})
 \subset \KK (\mathcal{E}_{{\rm M}})$.
 Using \eqref{c-star-iso}
we obtain immediately that $\mathcal{C}$ defines an element
  $C$ in $C^* (\widetilde{V})^\Gamma$. 
   
   {In what follows we have two goals in mind: on the one hand 
 we wish to generalize the assignment of the operator $C$  to any self-adjoint 
 bounded operator $\C$ (not necessarily
 a smoothing operator in the Mishchenko-Fomenko calculus); 
 on the other hand we wish to give a precise definition for such a $C$.
 To this end let} $\pi\colon C^*_r \Gamma\to \mathcal{B} (\ell^2 (\Gamma))$ be the left
regular representation. 
Recall e.g.~from \cite{Schick-ell2} 
that tensoring with $\pi$ (a faithful representation) induces an isomorphism
of right Hilbert $\Gamma$-modules
\begin{equation}\label{tensor}
 \mathcal{E}_{{\rm M}}\otimes_\pi \ell^2 (\Gamma)\rightarrow L^2 (\widetilde{V}, \Lambda^* 
\widetilde{V}) .
\end{equation}
Associating to each $\mathcal{C}\in \BB (\mathcal{E}_{{\rm M}})$
the operator $\mathcal{C}\otimes_\pi \Id_{\ell^2 (\Gamma)}$ in
$\mathcal{B}(\mathcal{E}_{{\rm M}}\otimes_\pi \ell^2 (\Gamma))$
  we define a homomorphism of $C^*$-algebras
$$ \BB (\mathcal{E}_{{\rm M}})\ni \mathcal{C}\mapsto
\mathcal{C}\otimes_\pi \Id_{\ell^2 (\Gamma)}\in \mathcal{B}(\mathcal{E}_{{\rm
    M}}\otimes_\pi \ell^2 (\Gamma)) .$$
 Notice that  the right hand side
is $\Gamma$-equivariant, given that
$\mathcal{C}$ is $C^*_r \Gamma$-linear.
Thus, conjugating with \eqref{tensor}, we obtain a $C^*$-homomorphism
$$ L_\pi\colon  \BB (\mathcal{E}_{{\rm M}})\ni \mathcal{C}\mapsto
C_\pi \in 
\mathcal{B} (L^2 (\widetilde{V}, \Lambda^* 
\widetilde{V}))^\Gamma$$

\begin{proposition}\label{prop:hs-mult}
$L_\pi$ sends 
$\KK (\mathcal{E}_{{\rm M}})$ isomorphically onto $C^* (\tilde{V})^\Gamma$.
Moreover, it sends  $ \BB (\mathcal{E}_{{\rm M}})$ into \\
$\mathfrak{M} ( C^* (\tilde{V})^\Gamma)$, the multiplier algebra
of $C^* (\tilde{V})^\Gamma$.
\end{proposition}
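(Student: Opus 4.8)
The strategy is to leverage the isomorphism \eqref{c-star-iso}, namely $\KK(\mathcal{E}_{\rm M})\cong C^*(\widetilde V)^\Gamma$, which is already established in \cite{PS-Stolz}, together with the standard functoriality of the external tensor product by a faithful representation. The key observation is that $L_\pi$ is built precisely so that, restricted to $\KK(\mathcal{E}_{\rm M})$, it should reproduce (up to the already-known identification) the isomorphism of \eqref{c-star-iso}. So the first step is to check that the two descriptions of the compact operators agree: that $L_\pi|_{\KK(\mathcal{E}_{\rm M})}$ coincides with the isomorphism \eqref{c-star-iso}. Concretely, $\KK(\mathcal{E}_{\rm M})$ is generated by rank-one operators $\theta_{s,t}$, $s,t\in\mathcal{E}_{\rm M}$, and one computes directly that $\theta_{s,t}\otimes_\pi\Id$, transported across the unitary \eqref{tensor}, is the $\Gamma$-equivariant operator on $L^2(\widetilde V,\Lambda^*\widetilde V)$ with the expected Schwartz kernel (finite propagation, locally compact), which is exactly the generator of $C^*(\widetilde V)^\Gamma$ coming from \eqref{c-star-iso}. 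Since both maps are $*$-homomorphisms agreeing on a generating set of a dense $*$-subalgebra, they agree; in particular $L_\pi$ sends $\KK(\mathcal{E}_{\rm M})$ isomorphically onto $C^*(\widetilde V)^\Gamma$.

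For the multiplier statement, I would argue as follows. The map $\mathcal{C}\mapsto \mathcal{C}\otimes_\pi\Id$ is a unital $*$-homomorphism $\BB(\mathcal{E}_{\rm M})\to\mathcal{B}(\mathcal{E}_{\rm M}\otimes_\pi\ell^2(\Gamma))$, hence (after conjugating by the unitary \eqref{tensor}) a unital $*$-homomorphism $\BB(\mathcal{E}_{\rm M})\to\mathcal{B}(L^2(\widetilde V,\Lambda^*\widetilde V))^\Gamma$. Now $C^*(\widetilde V)^\Gamma$ sits inside $\mathcal{B}(L^2(\widetilde V,\Lambda^*\widetilde V))^\Gamma$ as a (closed, two-sided) ideal, and its multiplier algebra $\mathfrak{M}(C^*(\widetilde V)^\Gamma)$ can be identified with the idealizer $\{T\in\mathcal{B}(L^2(\widetilde V,\Lambda^*\widetilde V))^\Gamma : T\,C^*(\widetilde V)^\Gamma\subseteq C^*(\widetilde V)^\Gamma \text{ and } C^*(\widetilde V)^\Gamma\,T\subseteq C^*(\widetilde V)^\Gamma\}$ provided $C^*(\widetilde V)^\Gamma$ acts nondegenerately on the ambient Hilbert space (which it does, being an essential ideal in $\mathcal{B}(L^2)^\Gamma$ — compare the discussion of multiplier algebras of Roe algebras in \cite{PS-Stolz} and \cite{higson-roeI}). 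So it suffices to show $C_\pi\cdot C^*(\widetilde V)^\Gamma\subseteq C^*(\widetilde V)^\Gamma$ for every $\mathcal{C}\in\BB(\mathcal{E}_{\rm M})$, and likewise on the other side. But $C^*(\widetilde V)^\Gamma=L_\pi(\KK(\mathcal{E}_{\rm M}))$ by the first part, $\KK(\mathcal{E}_{\rm M})$ is an ideal in $\BB(\mathcal{E}_{\rm M})$, and $L_\pi$ is a homomorphism, so $C_\pi\cdot L_\pi(k)=L_\pi(\mathcal{C}k)\in L_\pi(\KK(\mathcal{E}_{\rm M}))=C^*(\widetilde V)^\Gamma$ for all $k\in\KK(\mathcal{E}_{\rm M})$. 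The other inclusion is symmetric. Hence $C_\pi\in\mathfrak{M}(C^*(\widetilde V)^\Gamma)$.

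The main obstacle, and the only point requiring genuine care, is the first step: the identification of $L_\pi|_{\KK(\mathcal{E}_{\rm M})}$ with the isomorphism \eqref{c-star-iso}. One must be precise about the conventions defining \eqref{c-star-iso} in \cite{PS-Stolz} (the stabilization by $\ell^2(\naturals)$ that is suppressed here, the identification of rank-one operators with integral kernels, and the way the Mishchenko bundle $\mathcal{F}_{\rm M}=\widetilde V\times_\Gamma C^*_r\Gamma$ is unwound under the unitary \eqref{tensor} into functions on $\widetilde V$). Once the diagram
\[
\begin{CD}
\KK(\mathcal{E}_{\rm M}) @>{L_\pi}>> \mathcal{B}(L^2(\widetilde V,\Lambda^*\widetilde V))^\Gamma\\
@| @AAA\\
\KK(\mathcal{E}_{\rm M}) @>{\eqref{c-star-iso}}>> C^*(\widetilde V)^\Gamma
\end{CD}
\]
is checked to commute on rank-one operators, everything else is formal. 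A secondary, purely bookkeeping, point is to make sure the $\Gamma$-equivariance of $\mathcal{C}\otimes_\pi\Id$ — which relies on $\mathcal{C}$ being $C^*_r\Gamma$-linear and on $\Gamma$ acting on $\ell^2(\Gamma)$ through the right regular representation, commuting with $\pi$ — is invoked correctly so that the target really is $\mathcal{B}(L^2(\widetilde V,\Lambda^*\widetilde V))^\Gamma$ and not merely $\mathcal{B}(L^2(\widetilde V,\Lambda^*\widetilde V))$.
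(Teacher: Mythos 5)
Your proposal is correct and follows the same overall strategy as the paper (check the first claim on a dense subset of $\KK(\mathcal{E}_{\rm M})$, then deduce the multiplier statement by abstract $C^*$-algebra), but the two halves are implemented with different standard facts. For the first claim the paper does not compare $L_\pi$ with \eqref{c-star-iso} on rank-one operators; it evaluates $L_\pi$ on the dense subalgebra $\Psi^{-\infty}_{\CC\Gamma}$ of smoothing operators with $\CC\Gamma$-coefficients, citing Lott for the fact that these go to the $\Gamma$-compactly supported smoothing kernels on the covering, which are dense in $C^*(\widetilde V)^\Gamma$; injectivity comes from faithfulness of $\pi$, exactly as you say. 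Your rank-one route works too, but note that for arbitrary $s,t\in\mathcal{E}_{\rm M}=L^2$ the operator $L_\pi(\theta_{s,t})$ is only a \emph{norm limit} of finite propagation operators, so you should run the computation on a dense set of nice (compactly supported/smooth) sections. For the second claim the paper invokes Kasparov's theorem that $\BB(\mathcal{E}_{\rm M})$ \emph{is} $\mathfrak{M}(\KK(\mathcal{E}_{\rm M}))$ together with the general fact that an injective (nondegenerate) representation of a $C^*$-algebra extends to an isomorphism of multiplier algebras onto the multiplier algebra of the image; you instead use the idealizer picture of $\mathfrak{M}$ inside $\mathcal{B}(H)$ plus the ideal property $\BB\cdot\KK\subseteq\KK$. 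Your version is slightly more self-contained (no appeal to Kasparov's theorem), at the cost of needing the idealizer characterization. One genuine slip there: the hypothesis you need is that $C^*(\widetilde V)^\Gamma$ acts \emph{nondegenerately} on $L^2(\widetilde V,\Lambda^*\widetilde V)$ (true, since it contains approximate units built from smoothing kernels); your justification ``being an essential ideal in $\mathcal{B}(L^2)^\Gamma$'' is not right --- the Roe algebra is an ideal in $D^*(\widetilde V)^\Gamma$, not in all of $\mathcal{B}(L^2)^\Gamma$. With that hypothesis corrected, the argument closes.
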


\begin{proof}
$L_\pi$ is injective, given that $\pi$ is faithful.
The statement about $L_\pi (\KK (\mathcal{E}_{{\rm M}}))$  follows by looking
at the 
image through $L_\pi$ of a dense set in $\KK (\mathcal{E}_{{\rm M}})$. 
We can choose, for example, 
$\Psi^{-\infty}_{\CC\Gamma}$. If $A\in \Psi^{-\infty}_{\CC\Gamma}$ then
$L_\pi (A)$ is nothing but the associated  $\Gamma$-compactly 
supported smoothing operator on the covering (see \cite[Proposition 6]{LottI})
and these operators are dense in $C^* (\tilde{V})^\Gamma$. Thus 
$L_\pi$ sends 
$\KK (\mathcal{E}_{{\rm M}})$ isomorphically onto $C^* (\tilde{V})^\Gamma$.
The second statement is classical: 
indeed, Kasparov has proved, see \cite{WO}, that
$ \BB (\mathcal{E}_{{\rm M}})$ {\it is} the multiplier algebra of $\KK (\mathcal{E}_{{\rm M}})$.
The statement now follows from general arguments; indeed, 
given an injective representation $\phi$ of a $C^*$-algebra $K$ into the
bounded operators
of a Hilbert space $H$, then $\phi$ extends uniquely to a representation
$\tilde{\phi}$ of the multiplier algebra of $K$ and $\tilde{\phi}(\mathfrak{M}({K}))=\mathfrak{M}
(\phi (K))$.
\end{proof}

{We are now ready to define precisely the bounded operator $C$, on the covering
$\widetilde{V}$,
corresponding to a self-adjoint operator
$\C\in \BB (\mathcal{E}_{{\rm M}})$.
We simply  set
\begin{equation}\label{ell-pi}
C:= L_\pi (\mathcal{C})\,.
\end{equation}
We know that this is an element in
$\mathfrak{M} ( C^* (\tilde{V})^\Gamma)$, the multiplier algebra
of $C^* (\tilde{V})^\Gamma$.
If $\C$ is, in addition,  smoothing, i.e. 
$\C\in
\Psi^{-\infty}_{C^*_r \Gamma} (V, E\otimes \mathcal{F}_{{\rm M}})$,
then $C\in C^* (\tilde{V})^\Gamma$.}

  \begin{remark}
  If we apply all this to the smoothing
  trivializing perturbation $\mathcal{C}_f$ defined by a homotopy equivalence
  $f\colon M\to V$, { we immediately see that $\mathcal{C}_f$  defines a 
  perturbation }
  $C_f:= L_\pi (\C_f)$ in $C^* (\widetilde{Z})^\Gamma$, with $Z=M\cup (-V)$ and 
  $\widetilde{Z}$ the Galois covering defined by $f\circ u\cup (-u): M\cup (-V)\to B\Gamma$, $u$ denoting a classifying map for the universal cover of $V$.
  \end{remark}

\subsubsection{The  rho-class associated to a perturbation}

Let $\D\in {\rm Diff}^1_{C^*_r \Gamma} (V, E\otimes \mathcal{F}_{{\rm M}})$ be a 
Dirac type operator as above. We first assume $V$ and therefore $\widetilde{V}$
to be odd dimensional. Assume that there exists a 
{trivializing
perturbation $\C$ for $\D$.
We recall that this means that $\C$ is a self-adjoint bounded operator on
 $\mathcal{E}_{{\rm M}}$, the 
 Mishchenko $C^*_r \Gamma$-Hilbert module, with the property that the self-adjoint regular 
 operator
$\D+\C$ is invertible.}  Following \cite{MPI}, it is proved in \cite{LP03}
that this 
is true if and only if the index class $\Ind (\D)\in K_1 (C^*_r\Gamma)$
vanishes.

For the proof of the following proposition  see, e.g., the proof of 
\cite[Lemma 2.1]{LLP}.

\begin{proposition}
Let $D$ be the Dirac operator on $\widetilde{V}$ corresponding to $\D$.
Let $C:= L_\pi (\C)$.  Then 
$D+C$ 
is self-adjoint
and $L^2$-invertible.
\end{proposition}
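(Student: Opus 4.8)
The plan is to transfer the known statement about the operator $\D+\C$ acting on the Hilbert module $\mathcal{E}_{\rm M}$ to the operator $D+C$ acting on $L^2(\widetilde V,\Lambda^*\widetilde V)$ via the isomorphism \eqref{tensor}, using the fact that $D$ itself corresponds to $\D$ under this correspondence, i.e. $D = L_\pi(\D)$ in the appropriate (unbounded, regular operator) sense. First I would recall that tensoring a right Hilbert $C^*_r\Gamma$-module with $\ell^2(\Gamma)$ over the faithful left regular representation $\pi$ is a functor that preserves adjointability, self-adjointness and spectra of regular operators: if $T$ is a self-adjoint regular operator on $\mathcal{E}_{\rm M}$ then $T\otimes_\pi\Id$ is self-adjoint and regular on $\mathcal{E}_{\rm M}\otimes_\pi\ell^2(\Gamma)$, and $\spec(T\otimes_\pi\Id)=\spec(T)$ because $\pi$ is faithful (so the induced map on $C^*$-algebras generated by resolvents is isometric). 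In particular invertibility of $T$ is equivalent to invertibility of $T\otimes_\pi\Id$.

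Next I would apply this with $T=\D+\C$. By hypothesis $\D+\C$ is a self-adjoint regular operator on $\mathcal{E}_{\rm M}$ which is invertible. Hence $(\D+\C)\otimes_\pi\Id_{\ell^2(\Gamma)}$ is self-adjoint, regular and invertible on $\mathcal{E}_{\rm M}\otimes_\pi\ell^2(\Gamma)$. Now I conjugate by the isomorphism \eqref{tensor}, which is a $\Gamma$-equivariant unitary of Hilbert spaces identifying $\mathcal{E}_{\rm M}\otimes_\pi\ell^2(\Gamma)$ with $L^2(\widetilde V,\Lambda^*\widetilde V)$. Under this conjugation the bounded part $\C\otimes_\pi\Id$ becomes exactly $C=L_\pi(\C)$ by the definition \eqref{ell-pi}, and the unbounded part $\D\otimes_\pi\Id$ becomes the Dirac operator $D$ on $\widetilde V$ associated to $\D$ — this is the standard compatibility between the Mishchenko–Fomenko operator $\D$ and the lifted operator $D$ on the covering (the covering operator $D$ is, by construction, obtained from $\D$ by this very induction, cf. \cite{Schick-ell2}). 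Therefore the image of $(\D+\C)\otimes_\pi\Id$ under \eqref{tensor} is precisely $D+C$, and $D+C$ inherits self-adjointness and $L^2$-invertibility.

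The one point that needs care — and which I expect to be the main obstacle — is the bookkeeping for \emph{unbounded} operators under the functor $-\otimes_\pi\ell^2(\Gamma)$: one must check that the algebraic core on which $\D+\C$ is essentially self-adjoint maps to a core for the induced operator, that the graph of $\D\otimes_\pi\Id$ is the closure of the algebraic tensor product of graphs, and that "$D$ corresponds to $\D$" really means $D$ is the closure of $\D\otimes_\pi\Id$ conjugated by \eqref{tensor} and not merely that they agree on smooth compactly supported sections. All of this is routine once one invokes the regularity of $\D$ (guaranteed because $\D+\C$ with $\C$ bounded has the same domain as $\D$, and $\D$ is a symmetric elliptic operator in the Mishchenko–Fomenko calculus, hence regular by Mishchenko–Fomenko), together with the fact that $\C$ is $C^*_r\Gamma$-linear so that $\C\otimes_\pi\Id$ is a genuine bounded $\Gamma$-equivariant operator and a bounded perturbation does not change domains. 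I would organise the write-up as: (i) self-adjointness and regularity of $D$ from that of $\D$ via \eqref{tensor}; (ii) $D+C$ is the induced operator of $\D+\C$, hence self-adjoint; (iii) invertibility transfers because $\pi$ is faithful and \eqref{tensor} is unitary. As the proposition itself points to \cite[Lemma 2.1]{LLP}, I would lean on that lemma for the technical lemma in step (i)–(ii) and only spell out the invertibility transfer in step (iii).
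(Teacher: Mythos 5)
Your proposal is correct and is essentially the argument the paper delegates to its citation of \cite[Lemma 2.1]{LLP}: transfer self-adjointness and invertibility of $\D+\C$ through the functor $-\otimes_\pi\ell^2(\Gamma)$ and the unitary \eqref{tensor}, the only real content being the identification of $D$ with the induced operator $\D\otimes_\pi\Id$ (which is the compatibility result of \cite{Schick-ell2} that you correctly flag). One minor remark: for the direction actually needed, faithfulness of $\pi$ is not required, since $(\D+\C)^{-1}$ is a bounded adjointable operator whose image under $-\otimes_\pi\Id$ already furnishes the inverse of $D+C$.
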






Next we recall a result of Higson-Roe, implicitly proved in \cite[Proposition
5.9]{higson-roeI}.

\begin{proposition}\label{prop:hr-controlled-sharp}
Let $D$ be a self-adjoint unbounded operator on $H:=
L^2(\widetilde{V},\widetilde{E})$, as above. 
Let $\mathfrak{A}$ be a $C^*$-algebra in $\mathcal{B}(H)$ and let $\mathfrak{J}$
be an ideal in $\mathfrak{A}$. Let $\mathfrak{M}$ be the multiplier algebra
of $\mathfrak{J}$.
Assume that $S$ is a self-adjoint operator in $\mathfrak{M}$, that the resolvent of
$D$ is in $\mathfrak{J}$ and that $D (1+D^2)^{-1/2}$ is in $\mathfrak{A}$. Then
the resolvent of $D+S$ is in $\mathfrak{J}$
and $(D+S) (1+(D+S)^2)^{-1/2}\in \mathfrak{A}$. Consequently, 
\begin{itemize}
\item $f(D+S)\in \mathfrak{J}$, $\;\;\forall f\in C_0 (\RR)$
\item $f(D+S)\in \mathfrak{A}$,  $\;\;\forall f\in C([-\infty,\infty])$.
\end{itemize}

\end{proposition}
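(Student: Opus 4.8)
The plan is to reduce everything to a resolvent estimate and then invoke functional calculus. First I would observe that the hypotheses give us, with $R = (1+D^2)^{-1}$, that $R \in \mathfrak{J}$ and $D(1+D^2)^{-1/2} \in \mathfrak{A}$; since $S \in \mathfrak{M}$ is a self-adjoint multiplier of $\mathfrak{J}$, it is in particular bounded, so $D+S$ is again self-adjoint on the same domain. The key identity is the second resolvent formula: writing $Q = (i + D)^{-1}$ and $Q_S = (i + D + S)^{-1}$, one has
\begin{equation*}
Q_S = Q - Q_S S Q = Q - Q S Q + Q S Q_S S Q,
\end{equation*}
or more simply $Q_S = Q(1 + SQ)^{-1}$, using that $1 + SQ$ is invertible in $\mathfrak{M}$ (its imaginary part is controlled, or one expands in a Neumann series after noting $\|SQ\|$ can be made $<1$ near the relevant part of the spectrum and handling the rest by analyticity / a standard argument). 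Since $Q \in \mathfrak{J}$ (as $Q = Q \cdot (i+D) \cdot R$ up to bounded factors, or directly $Q^*Q = R \in \mathfrak{J}$ and $\mathfrak{J}$ is an ideal so $Q \in \mathfrak{J}$) and $(1+SQ)^{-1} \in \mathfrak{M}$, and $\mathfrak{J}$ is an ideal in $\mathfrak{M}$, we conclude $Q_S \in \mathfrak{J}$: the resolvent of $D+S$ lies in $\mathfrak{J}$.

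Next I would handle $(D+S)(1 + (D+S)^2)^{-1/2}$. The cleanest route is to write this bounded transform in terms of the resolvent via the formula
\begin{equation*}
(D+S)(1+(D+S)^2)^{-1/2} = \frac{1}{\pi}\int_0^\infty (D+S)\,\bigl((D+S)^2 + 1 + \lambda\bigr)^{-1}\,\lambda^{-1/2}\,d\lambda,
\end{equation*}
and compare term by term with the same expression for $D$ using the resolvent identity above; the difference involves $Q_S S Q$-type terms which land in $\mathfrak{J} \subset \mathfrak{A}$, while the leading term $D(1+D^2)^{-1/2} \in \mathfrak{A}$ by hypothesis. Alternatively, and perhaps more robustly, I would phrase this as: the map $D' \mapsto D'(1+D'{}^2)^{-1/2}$ applied to a self-adjoint regular operator whose resolvent lies in $\mathfrak{J}$ and whose bounded transform differs from an element of $\mathfrak{A}$ by something expressible through the resolvent, stays in $\mathfrak{A}$ — this is exactly the content of \cite[Proposition 5.9]{higson-roeI}, and I would cite that for the technical core rather than reprove it.

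Once we know $f(D+S) \in \mathfrak{J}$ for one $f$ with $f(x) = (i+x)^{-1}$ (and its conjugate), the first bullet follows for all $f \in C_0(\RR)$ by Stone–Weierstrass: the $C^*$-subalgebra of $C_0(\RR)$ generated by $(i+x)^{-1}$ and its conjugate is all of $C_0(\RR)$, and $g \mapsto g(D+S)$ is a $*$-homomorphism $C_0(\RR) \to \mathcal{B}(H)$ carrying this generator into $\mathfrak{J}$, hence carrying all of $C_0(\RR)$ into $\mathfrak{J}$ (which is closed). For the second bullet, $C([-\infty,\infty]) = C_0(\RR) + \CC\cdot 1$ as a unitization, and the bounded transform $b(x) = x(1+x^2)^{-1/2}$ together with $C_0(\RR)$ and constants generate $C([-\infty,\infty])$; since $b(D+S) \in \mathfrak{A}$ by the previous step, $C_0(\RR)$ maps into $\mathfrak{J} \subseteq \mathfrak{A}$, and $\mathfrak{A}$ contains the identity of $\mathcal{B}(H)$ (or at least the relevant multiplier, which suffices for the functional calculus to land in $\mathfrak{A}$), we get $f(D+S) \in \mathfrak{A}$ for all $f \in C([-\infty,\infty])$.

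\textbf{Main obstacle.} The one genuinely delicate point is the invertibility of $1 + SQ$ and the bookkeeping showing $Q_S S Q$-type correction terms actually lie in $\mathfrak{J}$ rather than merely in $\mathfrak{M}$ — one must use that $Q \in \mathfrak{J}$, that $\mathfrak{J}$ is an ideal in $\mathfrak{M}$, and be careful that $S$ only multiplies from within $\mathfrak{M}$. Since $S$ is a multiplier and not necessarily in $\mathfrak{A}$ or $\mathfrak{J}$ itself, every product must be arranged so that at least one genuine element of $\mathfrak{J}$ appears. This is precisely the sleight of hand already carried out in \cite[Proposition 5.9]{higson-roeI}, so in the write-up I would isolate this as the lemma to quote and spend the remaining lines on the Stone–Weierstrass reductions, which are routine.
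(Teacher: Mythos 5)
Your argument is correct in substance, but note that the paper does not prove this proposition at all: it is introduced with the words ``we recall a result of Higson--Roe, implicitly proved in [Proposition 5.9]'' and no proof follows, so your sketch is really a reconstruction of the Higson--Roe argument that the paper merely cites (and re-uses explicitly later, in the proof of Proposition \ref{prop:stab-index-class}, in the form of the integral formula for $\phi(G)-\phi(G+S)$ with $\phi(x)=x/\sqrt{1+x^2}$). Two points in your write-up deserve tightening. First, in the naive second resolvent identity $Q_S=Q-Q_S S Q$ the left factor $Q_S$ is a priori only in $\mathcal{B}(H)$, not in $\mathfrak{M}$, so it cannot be used directly to place $Q_S S Q$ in $\mathfrak{J}$; your alternative $Q_S=Q(1+SQ)^{-1}$ is the right route, but the invertibility of $1+SQ$ should not be argued via a Neumann series (there is no reason to have $\norm{SQ}<1$). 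Rather, $1+SQ=(i+D+S)Q$ is invertible in $\mathcal{B}(H)$ with inverse $(i+D)(i+D+S)^{-1}$, which is bounded because $D+S$ is self-adjoint with the same domain as $D$; since $1+SQ$ lies in the unital $C^*$-algebra $\mathfrak{J}+\complexs 1\subseteq\mathcal{B}(H)$, inverse-closedness of unital $C^*$-subalgebras puts $(1+SQ)^{-1}$ there as well, whence $Q_S=Q(1+SQ)^{-1}\in\mathfrak{J}$. Second, your hedge about whether $1\in\mathfrak{A}$ is unnecessary: $1=\bigl(D(1+D^2)^{-1/2}\bigr)^2+(1+D^2)^{-1}\in\mathfrak{A}+\mathfrak{J}\subseteq\mathfrak{A}$, so the Stone--Weierstrass step for $C([-\infty,\infty])$ goes through with no extra hypothesis.
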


\begin{proposition}
Let $\D$ and $D$ as above; let $\C$ be a 
trivializing perturbation
for $\D$ and let $C:= L_\pi (\C)\in \mathfrak{M} (C^* (\widetilde{V})^\Gamma)$ be the corresponding perturbation of $D$ on the
covering.
Then 
$$\frac{D+C}{|D+C|}\text{ is an element in }D^* (\widetilde{V})^\Gamma\,.$$
\end{proposition}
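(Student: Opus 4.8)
The plan is to deduce the statement from Proposition~\ref{prop:hr-controlled-sharp}, applied with $\mathfrak{A} = D^*(\widetilde{V})^\Gamma$, with $\mathfrak{J} = C^*(\widetilde{V})^\Gamma$ (an ideal in $\mathfrak{A}$, as recalled above), with $\mathfrak{M} = \mathfrak{M}(C^*(\widetilde{V})^\Gamma)$ and with $S := C = L_\pi(\mathcal{C})$. To invoke that proposition I first check its structural hypotheses. That $C$ is a self-adjoint element of $\mathfrak{M}$ is immediate from Proposition~\ref{prop:hs-mult} and the self-adjointness of $\mathcal{C}$. That the resolvent $(D\pm i)^{-1}$ lies in $C^*(\widetilde{V})^\Gamma$ and that $D(1+D^2)^{-1/2}$ lies in $D^*(\widetilde{V})^\Gamma$ are the basic facts for a Dirac type operator on a cocompact Galois covering: $f(D)\in C^*(\widetilde{V})^\Gamma$ for $f\in C_0(\RR)$ and $\chi(D)\in D^*(\widetilde{V})^\Gamma$ for every normalizing function $\chi\in C([-\infty,\infty])$, both of which follow from finite propagation speed of the wave operator $e^{itD}$ together with pseudolocality; see \cite{PS-Stolz} and the references therein.

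Granting these inputs, Proposition~\ref{prop:hr-controlled-sharp} yields $f(D+C)\in C^*(\widetilde{V})^\Gamma$ for all $f\in C_0(\RR)$ and $g(D+C)\in D^*(\widetilde{V})^\Gamma$ for all $g\in C([-\infty,\infty])$. It then remains to write $\frac{D+C}{|D+C|}$ as $g(D+C)$ for a suitable such $g$. Here I use the previous proposition: $D+C$ is self-adjoint and $L^2$-invertible, so $0\notin\spec(D+C)$ and there is $\epsilon>0$ with $\spec(D+C)\cap(-\epsilon,\epsilon)=\emptyset$. Choosing $g\in C([-\infty,\infty])$ with $g\equiv -1$ on $(-\infty,-\epsilon]$, $g\equiv 1$ on $[\epsilon,\infty]$ and $g$ linear in between, the spectral theorem gives $g(D+C)=\sgn(D+C)=(D+C)\,|D+C|^{-1}=\frac{D+C}{|D+C|}$, and by the above this operator lies in $D^*(\widetilde{V})^\Gamma$, as claimed.

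The only delicate point --- and the one where Proposition~\ref{prop:hr-controlled-sharp} does the real work --- is that $C$ is a multiplier of $C^*(\widetilde{V})^\Gamma$ which is \emph{not} itself in $C^*(\widetilde{V})^\Gamma$ and, in particular, has no finite propagation speed; hence the wave operator of $D+C$ does not propagate at finite speed and one cannot argue directly as for $\chi(D)$. Higson--Roe's perturbation lemma circumvents exactly this: membership in $\mathfrak{A}$ and in $\mathfrak{J}$ is stable under adding a self-adjoint multiplier of $\mathfrak{J}$, and this stability is what makes the argument go through. Everything else is routine functional calculus, so I anticipate no further obstacle.
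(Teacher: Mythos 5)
Your proof is correct and follows exactly the paper's own route: apply Proposition \ref{prop:hr-controlled-sharp} with $\mathfrak{A}=D^*(\widetilde{V})^\Gamma$, $\mathfrak{J}=C^*(\widetilde{V})^\Gamma$ and $S=C\in\mathfrak{M}(C^*(\widetilde{V})^\Gamma)$ (via Proposition \ref{prop:hs-mult}), then use the spectral gap of the invertible operator $D+C$ to choose a chopping function equal to $\pm 1$ on its spectrum. The only difference is that you spell out the hypothesis checks and the choice of $g$ more explicitly than the paper does.
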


\begin{proof}
Choose $\mathfrak{A}=D^* (\widetilde{V})^\Gamma$ and $\mathfrak{J}=
C^* (\widetilde{V})^\Gamma$. Then we can apply Proposition
\ref{prop:hr-controlled-sharp}, given that $C\in \mathfrak{M} (C^* (\widetilde{V})^\Gamma)$ (see
Proposition \ref{prop:hs-mult}).
We thus obtain that $\chi(D+C)$  is an element in $D^* (\widetilde{V})^\Gamma$ for any chopping
function $\chi$;
choosing $\chi$ equal to $\pm 1$ on the spectrum of $D+C$ we are done.
\end{proof}

This brings us to the definition
of rho-classes.

\begin{definition}\label{def:rho-perturbed}
Let  $\widetilde{V}\to V$ a $\Gamma$-covering
of a smooth compact orientable  odd dimensional manifold $V$ without boundary.
Let $D$ be a $\Gamma$-equivariant Dirac operator on $\widetilde{V}$
acting on the sections of a $\Gamma$-equivariant bundle $\widetilde{E}$.
Let $\mathfrak{M}$ be the multiplier algebra of $C^* (\widetilde{V})^\Gamma$.
For any self-adjoint operator $A\in \mathfrak{M}$ with the property
that $D+A$ is $L^2$-invertible we can consider the operator $(D+A)/|D+A|$.
Then, proceeding as above we have that 
$$\frac{D+A}{|D+A|}\text{ is an element in }D^* (\widetilde{V})^\Gamma\,.$$
The rho-class associated to $D$ and to the trivializing perturbation 
$A$ is, by definition, the idempotent defined by the involution $(D+A)/|(D+A)|$:
 \begin{equation}\label{rho-odd}
 \rho(D+A):= [\frac{1}{2}\left( \frac{D+A}{|D+A|} + 1 \right)]\;\in\; K_0 (D^* (\widetilde{V})^\Gamma)\,.
 \end{equation}
In the even dimensional case, with a $\ZZ/2$-graded bundle 
$\widetilde{E}=\widetilde{E}^+ \oplus \widetilde{E}^- $ 
we can proceed analogously, once we have  a
$\ZZ/2$-graded  trivializing self-adjoint perturbation 
$A\in \mathfrak{M} (C^* (\widetilde{V})^\Gamma)$, viz.
\begin{equation}\label{z2-pertubed}
   D+A= \begin{pmatrix}
      0 & D^- + A^-\\ D^+ + A^+ & 0
    \end{pmatrix}
   \end{equation}
A necessary and sufficient
 condition for such a perturbation to exist is that $\Ind (\D)=0$ in $K_0
 (C^*_r \Gamma)$, see \cite{MPII,LP03}.
 In order to define the rho class associated to \eqref{z2-pertubed} we consider the space, call it $J$, of  $\Gamma$-equivariant
bundle isometries from $\widetilde{E}^-$ to
$\widetilde{E}^+$ (more precisely,
on their stabilizations,  obtained by tensoring the two bundles with $l^2(\naturals)$, see the beginning of Section \ref{sub:coarse-index}; this makes sure
that the bundles are trivial). 
Any element $u$ in $J$ induces in a natural way an  
isometry $U\colon L^2 (\widetilde{V}, \widetilde{E}^-)
\to L^2 (\widetilde{V}, \widetilde{E}^+)$. It is obvious that  $U$ commutes with the action of $\Gamma$ and that
it covers the identity in the $D^*$ sense, see for example \cite[Section 1]{PS-Stolz}.
 Thus we can set
 \begin{equation}\label{rho-even}
\rho(D+A):= [U\chi(D + A)_+]\in K_1 (D^* (\widetilde{V})^\Gamma)
\end{equation}
with $\chi$ an odd  chopping function equal to the sign function on the  spectrum of
$D+A$. 
Since $J$ is isomorphic, once we stabilize,  to the space of $U(H)$-valued functions 
on the quotient $V=\widetilde{V}/\Gamma$ ($H$ a separable Hilbert space), and since $U(H)$ is contractible, we
see immediately that also $J$ is contractible. Thus, by homotopy invariance of
K-theory, the right hand side in \eqref{rho-even} 
does not depend on the choice of $u$ in $J$. 

\begin{remark} {In our previous paper \cite{PS-Stolz} we allowed for
    an arbitrary $\Gamma$-equivariant isometry $U\colon L^2 (\widetilde{V},
    \widetilde{E}^-) \to L^2 (\widetilde{V}, \widetilde{E}^+)$ covering the
    identity in the $D^*$-sense, see the discussion leading to Definition
    1.11.  \emph{This is not correct} for we could then change at will the
    class on the right hand side of \eqref{rho-even} by composing on the left
    with an arbitrary $\Gamma$-equivariant unitary $V\in U(L^2 (\widetilde{V},
    \widetilde{E}^-))$, covering the identity in the $D^*$-sense.} Using the
  more specific choice of $U$ as above, everything in \cite{PS-Stolz} goes
  through as stated there.
\end{remark}

Finally, if we consider the canonical map $u\colon \widetilde{V}\to E\Gamma$
then we define
\begin{equation}\label{real-rho}
\rho_{\Gamma} (D+A) := u_* \rho(D+A) \;\in\; K_* (D^*_\Gamma).
\end{equation}
\end{definition}

\begin{remark}
The rho classes do depend, in general,  on the choice of the trivializing
perturbation 
$A$. This will be clear from our delocalized APS index theorem
\ref{theo:k-theory-deloc} for perturbed
operators.

\end{remark}
\begin{remark}
Note that $K_*(D^*_\Gamma)$, by the Baum-Connes conjecture, is expected to
vanish for torsion-free groups, but is often non-zero otherwise. Therefore,
the universal $\rho$-class $\rho_\Gamma$ is of interest essentially only if
$\Gamma$ is a group with non-trivial torsion.
\end{remark}

\subsubsection{Fundamental examples of rho classes}

We present two fundamental examples of $\rho$-classes. As we shall see later these
examples enjoy strong stability properties with respect to the trivializing perturbation.

\begin{definition}\label{example:rho-he}
Let $(V,g)$ be an oriented smooth Riemannian manifold without boundary
with fundamental group $\Gamma$. Let $u\colon V\to B\Gamma$ be the
classifying map for the universal cover of $V$.
Let  $(M,h)$ be another oriented Riemannian
manifold without boundary and assume that $M\xrightarrow{f}V$ is an oriented
homotopy equivalence.
We consider $Z= M\disjointunion (-V)$ with the obvious classifying map $u_Z
  \colon Z\to B\Gamma$ 
  induced  by $u$ and by $u\circ f$. We then obtain
  $$\rho (D + C_f)\in K_{n+1} (D^* (\widetilde{Z})^\Gamma)\quad\text{and}\quad
   \rho_\Gamma (D + C_f)\in K_{n+1} (D^*_\Gamma)$$
  with $D$  equal to the signature operator
  on the covering $\widetilde{Z}:= u_Z^* E\Gamma$ and $C_f$ the smoothing
  trivializing perturbation defined by the homotopy equivalence $f$. {We choose
  Wahl's version of this perturbation}. Notice that
  there is an obvious $\Gamma$-equivariant map $\tilde{\phi}\colon \widetilde{Z}\to \widetilde{V}$,
  $\tilde{\phi}:=\tilde{f}\cup \id_{(-\widetilde{V})}$.
  We set 
   \begin{equation}\label{def-of-rho-Gamma(f)}
  \rho  (f):=\tilde{\phi}_* \rho (D+C_f)\in K_{n+1}(D^*
  (\widetilde{V})^\Gamma);\qquad   \rho_\Gamma (f):=\rho_\Gamma(D+C_f)\in
  K_{n+1}(D^*_\Gamma).
  \end{equation}
Observe  that, by functoriality,  $\rho_\Gamma (f)= \tilde{u}_*   \rho  (f)$.
\end{definition}

\begin{proof}
  We have to argue why $\rho (f)$ and $\rho_\Gamma(f)$ do not depend on the choices
  involved.
The signature operator on $Z$ depends on the choice of the Riemannian metrics
 on $M$ and $V$.
 The perturbation $\C_f$ depends on several choices (see \cite{Wahl_higher_rho} for
 details); it also depends on the choice of the metric on $M$ and $V$.
  Wahl proves (compare \cite[Section 4]{Wahl_higher_rho}) that two different
  choices can be joined by a path of invertible operators
  $D(t)+ B(t)$; thus, according to (an easy extension of) 
  Proposition \ref{prop:stab-index-class} below,
  the rho-class $\rho (D + C_f)\in K_{n+1} (D^* (\widetilde{Z})^\Gamma)$ is independent
  of the choice of the Riemannian metrics and of
  the choices we have made in defining the trivializing perturbation
  $C_f$. Consequently, also $\rho (f)$ and $\rho_\Gamma(f)$ are independent of these choices.
\end{proof}

  \begin{example}\label{example:LLP-rho}
Let $(M,g)$ be a closed oriented Riemannian manifold and 
$\widetilde{M}\to M$ a $\Gamma$-cover. Let $2m$ or $2m+1$ be the
dimension of $M$, depending whether $M$ is even or odd dimensional.
We assume that the Laplacian on differential forms on the covering is
$L^2$-invertible in degree $m$. We shall say briefly that the Laplacian on
the covering is invertible in middle degree.
By the homotopy invariance of $L^2$-Betti numbers and of the Novikov-Shubin
invariants this is a homotopy invariant condition.
Let $D$ be the signature operator on $\widetilde{M}$ and let $\D$ be the
associated Mishchenko-Fomenko operator
As explained in \cite{LPAGAG} there is then a class of smoothing 
trivializing perturbations
$\mathcal{S}$, that were named {\it symmetric} there, and that enjoy strong
stability properties, as we shall explain in a moment.
We define the rho-class of a manifold $M$ satisfying the above condition 
as the $K_* (D^* (\widetilde{M})^\Gamma)$-class
$$\rho (D+\mathcal{S}) \quad\text{with}\quad \mathcal{S}\text{ a symmetric trivializing perturbation.}
$$
with  $*=\dim M + 1$.
We also define $\rho_\Gamma (D+\mathcal{S})=\tilde{u}_* (\rho
(D+\mathcal{S}))\in K_* (D^*_\Gamma)$.
We show in Remark \ref{rem:symmetric} that
\begin{equation}\label{invariance-symm}
\rho_\Gamma (D+\mathcal{S})=\rho_\Gamma (D+\mathcal{S}^\prime) \in  K_*
(D^*_\Gamma) \quad \text{if } \mathcal{S},  
\mathcal{S}^\prime \text{ are  symmetric trivializing perturbations.}
\end{equation}
This brings us to the following definition:\\
{\it Let $\widetilde{M}\to M$ a $\Gamma$-cover and assume 
 that the Laplacian on differential forms on  $\widetilde{M}$  is invertible in middle degree.
 Let $D$ be the signature operator on  $\widetilde{M}$.
We  then define  $\rho_\Gamma (\widetilde{M})\in K_* (D^*_\Gamma)$ as
$$\rho_\Gamma (\widetilde{M}):= \rho_\Gamma (D+\mathcal{S}) \text{ for any  symmetric trivializing perturbation }
\mathcal{S}.$$ 
}
The notation is justified because, as in the previous example,
 the right hand side is also independent  of the choice of  metric.
Working a bit harder one can extend these results to odd
dimensional
Galois coverings $\widetilde{M}\to M$, $\dim M=2m+1$, with the property 
that in degree $m$ the reduced and unreduced cohomology with values 
in the local system defined by the Mishchenko-Fomenko bundle are equal.
See \cite{LLP,Wahl-K-theory}. 
  \end{example}

\subsection{Index classes}\label{subsub:coarse-classes}

The index map $\Ind_\Gamma\colon L_{n+1} (\ZZ\Gamma)\to K_{n+1} (C^*_r \Gamma)$ is constructed using APS higher 
index theory in the Mishchenko-Fomenko framework. {We wish to frame the construction 
of  APS index classes on manifolds with boundary and with cylindrical ends in coarse index theory.}

\subsubsection{Coarse index classes on manifolds with cylindrical ends}\label{subsub:cylindrical-classes}

We will now consider manifolds with boundary and with cylindrical ends. First we recall the basic notation.

\begin{notation}
Let $(W,g)$ be a  Riemannian manifold  with boundary $M:=\boundary
W$.
 We shall always assume $g$ to have
product structure near the boundary. We denote 
  by $W_\infty$ the manifold  $W$ with an 
  infinite semi-cylinder $[0,\infty)\times \pa M$ attached to the boundary. 
  If $\widetilde{W}$ is a $\Gamma$-covering of $W$, then we similarly denote by
$\widetilde{W}_\infty$ the manifold obtained from $\widetilde{W}$ by attaching an 
  infinite semi-cylinder.\\
  We will denote by $P_0$ the multiplication operator on $\widetilde{W}_\infty$
  defined by the characteristic
  function of the subset $[0,\infty)\times \pa \widetilde{W}$ in  $\widetilde{W}_\infty$; similarly,
  we denote by $P_R$ the multiplication operator defined by the characteristic
  function of  the subset $[R,\infty)\times \pa \widetilde{W}$ in $\widetilde{W}_\infty$.
  We have similar operators defined in $\RR\times \pa \widetilde{W}$ and with a small
  abuse of notation we employ the same symbols.\\
   We assume $W$ to be even dimensional.\\
  We consider a $\Gamma$-equivariant Dirac type operator $D$ on $\widetilde{W}$,
  acting on the sections of a $\Gamma$-equivariant $\ZZ/2$-graded Hermitian 
  vector bundle $\widetilde{E}$.
   We denote the boundary operator by $D_{\pa}$ and the natural extension
  of $D$ to   $\widetilde{W}_\infty$ by $D_\infty$. Notice once again  that we
  do not employ the tilde-notation for the operators on the covering.\\
    We denote with $\D$, $\D_\partial$
    and $\D_\infty$ the corresponding operators in the 
    Mishchenko-Fomenko calculus. 
    We adopt the Clifford and grading conventions of \cite{Wahl_higher_rho}.
 \end{notation}
    
    { We consider $C_{\pa}$, a norm limit of $\Gamma$-equivariant self-adjoint bounded finite propagation  operators on $L^2 (\partial \widetilde{W})$
    such that  
    $D_\partial + C_{\pa} $ is $L^2$-invertible. For example $C_{\pa}$
    is equal to   $L_\pi (\mathcal{C}_{\pa})$, with $\mathcal{C}_{\pa}$  a 
    smoothing trivializing perturbation for
    $\D_\partial$.}
    We now define a global perturbation on $\widetilde{W}_\infty$.
    
\begin{definition}\label{def:global-perturbation}
 We   extend  $C_{\pa}$  on
    $\RR\times \pa\widetilde{W}$ to be constant in the $\RR$ direction.
    We fix an inward
      collar on $\widetilde{W}$ diffeomorphic to $[-1,0]\times \widetilde{W}$; we consider
      $\widetilde{W}_\infty$; then 
    using a cutoff function equal to 1 on $[0,\infty)$ and equal to $0$ at
    $-1$,  we  graft
    this operator to a bounded operator on $\widetilde{W}_\infty$, denoted
    $C_\infty^+$, acting from the sections of $\widetilde{E}^+$ to the sections 
    of $\widetilde{E}^-$. We set  $C_\infty^- := (C_\infty^+)^*$, the formal
    adjoint of $C_\infty^+$, and we consider 
    $$C_\infty :=  \begin{pmatrix}
      0 & C_\infty^-\\ C_\infty^+ & 0
    \end{pmatrix} \;.$$
    \end{definition}
    We consider the unbounded operator $D_\infty + C_\infty$, which is odd
    with respect to the 
    $\ZZ/2$-grading induced by $\widetilde{E}$.  
    The operator $C_\partial$  is, by assumption,
     a norm limit of finite propagation operators. Therefore  
    $C_\infty$ also has this property. 

 More generally, let $B_\infty$ be a bounded self-adjoint  odd
 operator on $W_\infty$ with the following two properties:
 \begin{equation}\label{def-of-B}
B_\infty\;\text{is {norm limit of  finite propagation
    $\Gamma$-operators} and}\;\;
  P_0B_\infty P_0- P_0C_\infty P_0\in \relC{\tilde W_\infty}{\tilde
 W}^\Gamma.
 \end{equation}
Notice that $C_\infty$ has these properties.
 
    \begin{lemma}
    The operators $B_\infty$ belongs to the multiplier algebra
    $\mathfrak{M} (C^* ( \widetilde{W}_\infty)^\Gamma)$. 
    \end{lemma}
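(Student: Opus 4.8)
The plan is to show that $B_\infty$, a norm limit of finite propagation $\Gamma$-operators on $\widetilde{W}_\infty$, multiplies $C^*(\widetilde{W}_\infty)^\Gamma$ into itself. Since $C^*(\widetilde{W}_\infty)^\Gamma$ is the norm closure of the $\Gamma$-equivariant locally compact finite propagation operators and $B_\infty$ is by assumption a norm limit of finite propagation $\Gamma$-operators, it suffices by continuity to check that if $T$ is $\Gamma$-equivariant, locally compact and of finite propagation, and $B$ is a $\Gamma$-equivariant finite propagation operator, then $BT$ and $TB$ are $\Gamma$-equivariant, locally compact and of finite propagation — the three defining properties of $C^*(\widetilde{W}_\infty)^\Gamma$. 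Equivariance is immediate since the class of $\Gamma$-equivariant operators is an algebra. Finite propagation is also immediate: $\supp(BT)\subseteq\{(x,y): \exists z,\ (x,z)\in\supp B,\ (z,y)\in\supp T\}$, so the propagation of $BT$ is at most the sum of the propagations of $B$ and $T$, and symmetrically for $TB$.

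The remaining point is local compactness: for every $\phi\in C_c(\widetilde{W}_\infty)$, the operators $\phi(BT)$ and $(BT)\phi$ (and likewise with $TB$) are compact, and similarly for the adjoints. For $(BT)\phi = B(T\phi)$ this is clear: $T\phi$ is compact since $T$ is locally compact, and $B$ is bounded, so $B(T\phi)$ is compact. For $\phi(BT)$ we use finite propagation: choose $\psi\in C_c(\widetilde{W}_\infty)$ equal to $1$ on an $r$-neighbourhood of $\supp\phi$, where $r$ is (at least) the propagation of $B$; then $\phi B = \phi B\psi$, so $\phi(BT) = \phi B\psi T = (\phi B)(\psi T)$, and $\psi T$ is compact by local compactness of $T$ while $\phi B$ is bounded, hence the product is compact. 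The same argument with adjoints (noting $(BT)^* = T^* B^*$, and $T^*$ is again locally compact of finite propagation, $B^*$ of finite propagation) handles the $*$-conditions. Exactly the same reasoning applies to $TB$.

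There is no serious obstacle here; the only mild subtlety worth a remark is that $C^*(\widetilde{W}_\infty)^\Gamma$ is defined with the usual stabilization (operators on $L^2(\widetilde{W}_\infty,\widetilde{E})\otimes\ell^2(\naturals)$, acting on a corner), as recalled at the beginning of Section~\ref{sub:coarse-index}; one lets $B_\infty$ act as $B_\infty\otimes\id$, and all three verifications above go through verbatim. Passing to norm limits, the set of operators multiplying $C^*(\widetilde{W}_\infty)^\Gamma$ into itself is norm-closed (it is precisely the multiplier algebra viewed inside $\mathcal{B}(\cdot)$), so $B_\infty$ itself lies in $\mathfrak{M}(C^*(\widetilde{W}_\infty)^\Gamma)$. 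The hypothesis \eqref{def-of-B} involving $P_0 B_\infty P_0 - P_0 C_\infty P_0$ is not needed for this lemma; it will only be used later, when one must identify the class of $D_\infty + B_\infty$ near the cylindrical end.
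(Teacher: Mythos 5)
Your proof is correct and follows essentially the same route as the paper: the key point in both is that for a finite propagation operator $B$ and compactly supported $\phi$ one has $\phi B=\phi B\psi$ with $\psi$ compactly supported and equal to $1$ on a suitable neighbourhood of $\supp\phi$, so that products with locally compact finite propagation operators remain locally compact, and one then passes to norm limits. Your observation that the condition on $P_0B_\infty P_0-P_0C_\infty P_0$ is irrelevant here also matches the paper, which states that \emph{every} norm limit of $\Gamma$-equivariant finite propagation operators is a multiplier of $C^*(\widetilde{W}_\infty)^\Gamma$.
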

    
    \begin{proof}
      Indeed, as is well known, {\emph{every}  operator which
      is norm limit of  $\Gamma$-equivariant finite
      propagation operators} is a multiplier of
      $C^*(\widetilde{W}_\infty)^\Gamma$. {We give for completness
      the easy argument:}
if  $A$ is such an operator with propagation $R$ and if $\phi$ is a
      compactly supported function, then $\phi A=\phi A\psi$ for every
      compactly supported $\psi$ which is equal to $1$ on the $R$-neighborhood
      of the support of $\phi$. Consequently, if $E\in C^* ( \widetilde{W}_\infty)^\Gamma$, $\phi A E=\phi
      A\psi E$ is compact given that $\psi E$ is compact. Passing to norm
      limits, the general statement follows.   \end{proof}
    
    We can now apply the Higson-Roe result, {as stated in Proposition 
    \ref{prop:hr-controlled-sharp}}, with 
    $\mathfrak{A}=D^* (\widetilde{W}_\infty)^\Gamma$,  $\mathfrak{J}=
C^* (\widetilde{W}_\infty)^\Gamma$  and $S=B_\infty\in \mathfrak{M} (C^* (\widetilde{W}_\infty)^\Gamma)$,
obtaining 
\begin{lemma}\label{lem:ancon}
    For every $f\in C_0(\reals)$ we get
    $f(D_\infty+B_\infty)\in C^*(\tilde
    W_\infty)^\Gamma$. If $f\in C([-\infty,\infty])$ then 
    $f(D_\infty+B_\infty)\in D^*
    (\widetilde{W}_\infty)^\Gamma$. 
    \end{lemma}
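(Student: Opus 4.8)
The plan is to deduce Lemma~\ref{lem:ancon} directly from Proposition~\ref{prop:hr-controlled-sharp} by checking that its three hypotheses hold in the situation at hand. We work on the Hilbert space $H:=L^2(\widetilde{W}_\infty,\widetilde{E})$, with $\mathfrak{A}=D^*(\widetilde{W}_\infty)^\Gamma$ and $\mathfrak{J}=C^*(\widetilde{W}_\infty)^\Gamma$, which is an ideal in $\mathfrak{A}$; the relevant multiplier algebra is $\mathfrak{M}=\mathfrak{M}(C^*(\widetilde{W}_\infty)^\Gamma)$. The operator $D$ here is $D_\infty$, which is self-adjoint and unbounded, and the perturbation $S$ is $B_\infty$. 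So the three things to verify are: (i) $B_\infty\in\mathfrak{M}$; (ii) the resolvent of $D_\infty$ lies in $\mathfrak{J}=C^*(\widetilde{W}_\infty)^\Gamma$; and (iii) $D_\infty(1+D_\infty^2)^{-1/2}\in\mathfrak{A}=D^*(\widetilde{W}_\infty)^\Gamma$.

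Point (i) is exactly the content of the Lemma just proved before the statement: $B_\infty$ is by assumption \eqref{def-of-B} a norm limit of $\Gamma$-equivariant finite propagation operators, hence a multiplier of $C^*(\widetilde{W}_\infty)^\Gamma$. For points (ii) and (iii), the key observation is that $D_\infty$ is an honest Dirac type operator on the complete manifold $\widetilde{W}_\infty$ — no perturbation is involved at this stage — so the standard properties of Dirac operators on complete Galois coverings apply: the wave operator $e^{itD_\infty}$ has unit propagation (finite propagation speed), and $D_\infty$ is of Dirac type, hence has locally compact resolvent. From finite propagation speed of the wave operator one gets, by the usual Fourier-integral argument, that $f(D_\infty)$ has finite propagation whenever $\hat f$ has compact support, and then by norm approximation that $f(D_\infty)\in D^*(\widetilde{W}_\infty)^\Gamma$ for all $f\in C([-\infty,\infty])$ and $f(D_\infty)\in C^*(\widetilde{W}_\infty)^\Gamma$ for all $f\in C_0(\reals)$; in particular $(D_\infty\pm i)^{-1}\in C^*(\widetilde{W}_\infty)^\Gamma$ and $D_\infty(1+D_\infty^2)^{-1/2}\in D^*(\widetilde{W}_\infty)^\Gamma$, which are precisely (ii) and (iii). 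This is entirely parallel to the unperturbed discussion in \cite{PS-Stolz} and in \cite{higson-roeI}.

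Having checked the hypotheses, Proposition~\ref{prop:hr-controlled-sharp} applies verbatim with $D$ replaced by $D_\infty$ and $S$ by $B_\infty$, and its two displayed conclusions give exactly the two assertions of Lemma~\ref{lem:ancon}: $f(D_\infty+B_\infty)\in C^*(\widetilde{W}_\infty)^\Gamma$ for $f\in C_0(\reals)$, and $f(D_\infty+B_\infty)\in D^*(\widetilde{W}_\infty)^\Gamma$ for $f\in C([-\infty,\infty])$.

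The only genuinely delicate point — and the one I would expect to spend a sentence or two on rather than invoke blindly — is the verification that the \emph{unperturbed} functional calculus $f(D_\infty)$ lands in the Roe algebra and its multiplier/pseudolocal companion on the manifold with cylindrical end $\widetilde{W}_\infty$. Here one must be a little careful that $\widetilde{W}_\infty$ is complete (so that $D_\infty$ is essentially self-adjoint and the finite-propagation-speed estimate for $e^{itD_\infty}$ is valid) and that the cocompactness of the original $\Gamma$-action on $\widetilde{W}$, together with the product structure on the end, gives the $\Gamma$-equivariant local compactness needed to land in $C^*(\widetilde{W}_\infty)^\Gamma$ rather than merely in the multiplier algebra. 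All of this is standard coarse index theory and is already used implicitly elsewhere in the paper, so the proof of the lemma is short: cite the preceding lemma for (i), recall the unit-propagation/local-compactness facts for the Dirac operator $D_\infty$ for (ii) and (iii), and invoke Proposition~\ref{prop:hr-controlled-sharp}.
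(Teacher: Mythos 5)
Your proposal is correct and is exactly the paper's argument: the paper obtains Lemma \ref{lem:ancon} by applying Proposition \ref{prop:hr-controlled-sharp} with $\mathfrak{A}=D^*(\widetilde{W}_\infty)^\Gamma$, $\mathfrak{J}=C^*(\widetilde{W}_\infty)^\Gamma$ and $S=B_\infty$, using the immediately preceding lemma for the multiplier property of $B_\infty$. The hypotheses (ii) and (iii) on the unperturbed $D_\infty$, which the paper leaves implicit, are verified by you via the standard unit-propagation and local-compactness facts, exactly as intended.
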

    
%
      From Lemma \ref{lem:ancon} we learn that if $\chi$ is a chopping
      function then $\chi (D_\infty + B_\infty)$ is an element in $D^*
    (\widetilde{W}_\infty)^\Gamma$ 
    and an involution modulo $C^* (\widetilde{W}_\infty)^\Gamma$. 
   However, more is
    true.
    
    \begin{proposition}\label{prop:key}
  If 
   $C_{\pa}$ and  $C_\infty$  are as above,
     if $B_\infty$ is as in \eqref{def-of-B} and $\chi$ is a chopping
    function  equal to the sign function on the spectrum of $D_{\pa} + C_{\pa}$,    then 
    $\chi (D_\infty + B_\infty)$ is an involution modulo $C^* (\widetilde{W}\subset
    \widetilde{W}_\infty)^\Gamma$.
    \end{proposition}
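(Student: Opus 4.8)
The plan is to show that $\chi(D_\infty+B_\infty)^2-1$ lies in the ideal $\relC{\tilde W_\infty}{\tilde W}^\Gamma$ of $C^*(\widetilde{W}_\infty)^\Gamma$ consisting of operators that are locally compact and \emph{supported away from the cylindrical end}, i.e. that are a norm limit of operators whose propagation and whose support (in the cylinder coordinate) are bounded. From Lemma \ref{lem:ancon} we already know $\chi(D_\infty+B_\infty)^2-1\in C^*(\widetilde{W}_\infty)^\Gamma$, since $g:=\chi^2-1\in C_0(\reals)$; the content of the proposition is the additional statement about the support, which localizes the failure of $\chi(D_\infty+B_\infty)$ to be an involution near $W\subset W_\infty$.

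First I would reduce to understanding the operator $g(D_\infty+B_\infty)$ far out on the cylinder. On the half-cylinder $[R,\infty)\times\pa\widetilde W$, multiplication by $P_R$, the operator $D_\infty+B_\infty$ agrees with the translation-invariant operator $D_{\text{cyl}}+C_{\text{cyl}}$ on $\RR\times\pa\widetilde W$, where $D_{\text{cyl}}=\partial_t\cdot(\text{Clifford})+D_\pa$ and $C_{\text{cyl}}$ is the constant extension of $C_\pa$; this is where the hypothesis \eqref{def-of-B} on $B_\infty$, namely that $P_0B_\infty P_0-P_0C_\infty P_0\in\relC{\tilde W_\infty}{\tilde W}^\Gamma$, is used: modulo the ideal we are allowed to replace $B_\infty$ by $C_\infty$ on the cylinder. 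Since $\chi$ equals the sign function on the spectrum of $D_\pa+C_\pa$, and $D_\pa+C_\pa$ is invertible, the operator $D_{\text{cyl}}+C_{\text{cyl}}$ is invertible on $L^2(\RR\times\pa\widetilde W)$ with a spectral gap, hence $g(D_{\text{cyl}}+C_{\text{cyl}})=0$ because $g$ vanishes on the spectrum (which is bounded away from $0$). A standard finite-propagation/locality estimate (unit propagation speed no longer holds after perturbation, but one still has the Higson--Roe type estimate of Proposition \ref{prop:hr-controlled-sharp}, together with the fact that $g$ can be approximated in $C_0$-norm by compactly supported functions whose Fourier transforms control propagation) then shows that $g(D_\infty+B_\infty)$ differs from $g$ applied to the glued-in invertible cylindrical model by an operator supported, up to arbitrarily small norm, in a bounded neighborhood of the collar of $W$. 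Therefore $g(D_\infty+B_\infty)\in\relC{\tilde W_\infty}{\tilde W}^\Gamma$.

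The key steps, in order: (i) record that $g(D_\infty+B_\infty)\in C^*(\widetilde{W}_\infty)^\Gamma$ from Lemma \ref{lem:ancon}; (ii) use \eqref{def-of-B} to replace $B_\infty$ by $C_\infty$ modulo $\relC{\tilde W_\infty}{\tilde W}^\Gamma$ when working near the cylinder, i.e. after multiplying by $P_R$ for large $R$; (iii) identify $D_\infty+C_\infty$ restricted to the far cylinder with the translation-invariant invertible operator $D_{\text{cyl}}+C_{\text{cyl}}$, using that near the boundary everything has product structure and $C_\infty$ is the constant extension of $C_\pa$; (iv) observe $g$ vanishes identically on $\spec(D_{\text{cyl}}+C_{\text{cyl}})$ by the choice of $\chi$ as the sign on $\spec(D_\pa+C_\pa)$ and invertibility of $D_\pa+C_\pa$; (v) deploy the locality/decay estimate for functions of a perturbed Dirac operator to transfer the vanishing on the model to an estimate that $P_R\, g(D_\infty+B_\infty)P_R\to 0$ in norm as $R\to\infty$, so that $g(D_\infty+B_\infty)$ is a norm limit of operators supported near $W$; (vi) conclude that $g(D_\infty+B_\infty)=\chi(D_\infty+B_\infty)^2-1\in\relC{\tilde W_\infty}{\tilde W}^\Gamma$, i.e. $\chi(D_\infty+B_\infty)$ is an involution modulo that ideal.

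The main obstacle is step (v): unlike an honest Dirac operator, $D_\infty+B_\infty$ does \emph{not} have a wave operator with unit propagation, so the usual argument "the support of $f(D)$ is controlled by $\supp(\hat f)$" is unavailable directly. I would handle this by approximating $g\in C_0(\reals)$ uniformly by Schwartz functions, whose resolvent-expansion expressions $\int \hat f(s)\,e^{is(D_\infty+B_\infty)}\,ds$ can be controlled because $B_\infty$ itself is a norm limit of finite-propagation operators (so $e^{is(D_\infty+B_\infty)}$ has propagation growing only linearly in $|s|$, by a Duhamel/Dyson-series estimate), combined with the exponential decay of resolvents of the invertible model $D_{\text{cyl}}+C_{\text{cyl}}$ away from the collar; alternatively, and more cleanly, one can run the argument entirely at the level of resolvents $(D_\infty+B_\infty\pm i)^{-1}$, which lie in $C^*(\widetilde{W}_\infty)^\Gamma$ by Proposition \ref{prop:hr-controlled-sharp}, and compare them on the cylinder with the resolvents of the invertible translation-invariant model, whose off-diagonal decay is explicit. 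This is precisely the point where "large scale index theory for Dirac type operators perturbed by lower order operators", advertised in the introduction, does the work.
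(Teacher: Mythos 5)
Your proposal is correct and follows essentially the same route as the paper: reduce $e=1-\chi^2$ to a comparison of $P_0\,e(D_\infty+B_\infty)\,P_0$ with $e$ applied to the invertible translation-invariant cylinder operator (which vanishes by the choice of $\chi$), using \eqref{def-of-B} to pass from $B_\infty$ to $C_\infty$ modulo the ideal, and handling the lack of unit propagation speed by working at the level of resolvents. The paper implements exactly your ``cleaner'' alternative for step (v): it notes that $(x\pm i\lambda)^{-1}$ generate $C_0(\reals)$, expands $(D_\infty+B_\infty+\lambda i)^{-1}$ as a Neumann series in $(D_\infty+\lambda i)^{-1}$ and $B_\infty$ (and likewise for the cylinder model), and compares term by term using the unit-propagation argument for the \emph{unperturbed} operators together with a multiplicativity-modulo-the-ideal lemma.
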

  
  \begin{proof}
Basically, we prove this by comparing $\chi(D_\infty+B_\infty)$ to the 
corresponding
operator, $D_{{\rm cyl}} + C_{\pa}\otimes \Id_{\RR}$, on the the two-sided cylinder
$ \partial \tilde W\times\RR$.

\bigskip
We start with some general comparison results. For simplification, we use the
following notation:
\begin{itemize}
\item We use $D$ instead of $D_\infty$, and $\bar D$ for the corresponding
  (translation invariant) operator on $\reals\times \boundary \tilde W$.
\item We use $C$ instead of $C_\infty$ and $\bar C$ for the translation invariant perturbation of $\bar D$
  on $\reals\times \boundary W$; note that then $\bar D+\bar C$ is
  invertible.
  \item We use $B$ instead of $B_\infty$.
\item We choose a cutoff function $\chi$ which is equal to $\pm 1$ on the
  spectrum of $\bar D+\bar C$, setting $e:=1-\chi^2$ then $e(\bar D+\bar
  C)=0$. 
  \item {The subspace $[0,\infty)\times \boundary \tilde W$ is contained
  isometrically in $\tilde W_\infty$ and in $\reals\times\boundary\tilde W$;
  {\it we freely identify sections supported on this part of both manifolds}. 
  We employ the multiplication
  operator
  $P_R$ for $R\ge 0$; 
  we remark that this is nothing but the orthogonal
  projection onto the sections supported on
  $[R,\infty)\times \boundary \tilde W$. Once again,  with a small
  abuse of notation, we employ the same symbols for this operator
     in $\widetilde{W}_\infty$.
  and in $\RR\times \pa \widetilde{W}$. This way, in particular we consider an operator like $P_0f(\bar
D)P_R$ as acting on $L^2(\tilde W_\infty)$ (instead of $L^2(\boundary\tilde
W\times \reals)$). }
\end{itemize}

\begin{lemma}\label{lem:fD_loc}
  For $f\in C_0(\reals)$, $P_0f(D)P_0-P_0f(\bar D)P_0 \in C^*(\tilde W\subset
  \tilde W_\infty)^\Gamma$. 
\end{lemma}
\begin{proof}
  We know that $f(D)\in C^*(\tilde W_\infty)^\Gamma$ and $f(\bar D)\in
  C^*(\reals\times\boundary\tilde W)^\Gamma$ and that $P_0$ is in $D^*$. So it
  only remains to check the support condition, i.e.~to show that for each
  $\epsilon>0$ there is an $R>0$ such that $\norm{P_0f(D)P_R-P_0f(\bar
    D)P_R}<\epsilon$ (then also its adjoint $P_Rf(D)P_0-P_Rf(\bar D)P_0$ has norm
  $<\epsilon$). For the latter, we approximate $f$ in supremum norm by a
  function
  $g$ whose Fourier transform has compact support (say in $[-R,R]$), and
  therefore approximate $f(D)$ in operator norm by $g(D)$  (and $f(\bar D)$ by
  $g(\bar D)$). The usual Fourier inversion formula then implies, using unit
  propagation speed for the wave operators of $D$ and $\bar D$, that $P_0g(D)
  P_R=P_0g(\bar D)P_R$, and the statement follows.
\end{proof}

We now generalize Lemma \ref{lem:fD_loc} to $D+B$. Note that we don't have
unit propagation speed available. 
Instead, we give a proof which
uses a comparison between $f(D)$ and $f(D+B)$.

\begin{lemma}\label{lem:fDC_loc}
  For $f\in C_0(\reals)$ and $B$ as in \eqref{def-of-B} 
  we have that $P_0 f(D+B)P_0-P_0f(\bar D+\bar C)P_0 \in C^*(\tilde W\subset
  \tilde W_\infty)^\Gamma$. 
\end{lemma}
\begin{proof}
For the following simple argument, using the Neumann series,
  we are 
  grateful to the referee.
We have to show that for $f\in C_0(\reals)$ it holds that $\lim_{R\to\infty}
  \norm{P_Rf(D+B)P_0-P_Rf(\bar D+\bar C)P_0}\xrightarrow{R\to\infty} 0$; the
  corresponding statement then also holds for the adjoint, involving
  $P_0f(D+B) P_R$.
  
Recall that 
    if $X$ is a norm limit of bounded finite propagation
  operators then 
  $\lim_{\abs{R-S}\to\infty}\norm{(1-P_S)X P_R }=0$. 
   Therefore, if $A,\bar A,B,\bar B$ are norm limits of $\Gamma$-equivariant finite propagation
   operators and if $P_0AP_0-P_0\bar A P_0, P_RBP_0-P_R\bar BP_0\in C^*(\tilde
   W\subset \tilde W_\infty)^\Gamma$ in norm, then also
   \begin{multline}\label{eq:prod_lim}
     P_{2R}ABP_0 - P_{2R} \bar A\bar B P_0
 = \underbrace{(P_{2R}AP_R - P_{2R}\bar A P_R)}_{\xrightarrow{R\to\infty} 0}
     P_R B P_0\\
 + P_{2R} \bar A P_R\underbrace{(P_R B P_0-P_R\bar B
       P_0)}_{\xrightarrow{R\to\infty} 0} + P_{2R} A \underbrace{((1-P_R)
     B P_0)}_{\xrightarrow{R\to\infty} 0} - P_{2R} \bar A \underbrace{((1-P_R)
     \bar B P_0)}_{\xrightarrow{R\to\infty}0} \xrightarrow{R\to\infty} 0\\
\implies P_0ABP_0-P_0\bar A\bar BP_0 \in C^*(\tilde W\subset \tilde
W_\infty)^\Gamma. 
   \end{multline}

Consequently,  if 
the lemma is true for $f$ and $g$ then it is true for their
 product $fg$. 
  As $C^*$-algebra, $C_0(\reals)$ is for any $\lambda>0$ generated by the two
  functions 
  $x\mapsto (x+i\lambda)^{-1}$ and $x\mapsto
  (x-i\lambda)^{-1}$. This means that it suffices to
  treat the functions $x\mapsto (x+\lambda i)^{-1}$ and $x\mapsto (x-\lambda
  i)^{-1}$ for $\lambda>0$ large  and we
  concentrate for notational convenience on the first.

As $\lambda$ is sufficiently large, we can then use the Neumann series
  \begin{equation*}
    (D+B+\lambda i)^{-1} = (D+\lambda i)^{-1} \sum_{k=0}^\infty
    \left(-B(D+\lambda i)^{-1}\right)^k;\qquad (\bar D+\bar B+\lambda i)^{-1}
    = (\bar D+\lambda i)^{-1} \sum_{k=0}^\infty
    \left(-\bar B(\bar D+\lambda i)^{-1}\right)^k
  \end{equation*}
By Lemma \ref{lem:fD_loc}, $P_0(D+\lambda I)^{-1}P_0-P_0(\bar D+\lambda
i)^{-1} P_0\in C^*(\tilde W\subset \tilde W_\infty)^\Gamma$ and by
\eqref{def-of-B} $P_0BP_0-P_0\bar BP_0\in C^*(\tilde W\subset \tilde
W_\infty)^\Gamma$. We \eqref{eq:prod_lim} we can pass to products
and finite sums, showing that for each $N\in\naturals$ also
\begin{equation*}
  P_o\left((D+\lambda i)^{-1} \sum_{k=0}^N
    \left(-B(D+\lambda i)^{-1}\right)^k\right)P_0 -   P_o\left((\bar D+\lambda i)^{-1} \sum_{k=0}^N
    \left(-\bar B(\bar D+\lambda i)^{-1}\right)^k\right)P_0 \in C^*(\tilde W\subset \tilde
W_\infty)^\Gamma
\end{equation*}
  Because of the norm convergence of the Neumann series then also
  \begin{equation*}
    P_0(D+B+\lambda i)^-1P_0 -P_0 (\bar D+\bar B+\lambda i)^{-1}P_0 \in
    C^*(\tilde W\subset \tilde W_\infty)^\Gamma.
  \end{equation*}
\end{proof}

To finish the proof of Proposition \ref{prop:key} we only have to prove
the support condition for $e(D+B)$, with $e(x)=1-\chi(x)^2$ as above, in
particular $e\in C_0(\reals)$. It follows from Lemma \ref{lem:fDC_loc} that
$$P_0e(D+B)P_0-P_0 e(\bar D+\bar C)P_0\in C^*(\tilde W\subset \tilde
W_\infty)^\Gamma,$$
 and by our choice of $\chi$ we have $e(\bar D+\bar
C)=0$. Finally, by finite propagation and the definition of $P_0$ also
$P_0e(D+B)(1-P_0)$, $(1-P_0)e(D+B)(1-P_0)$, $(1-P_0)e(D+B)P_0\in C^*(\tilde
W\subset \tilde W_\infty)^\Gamma$.   
   \end{proof}


\begin{definition}
  These considerations imply that there 
  is a well defined coarse
  relative index class 
\begin{equation}\label{rel-class}  
  \Ind^{{\rm rel}}(D_\infty + C_\infty) \in
  K_0(\relC{\widetilde{W}_\infty}{\widetilde{W}}^\Gamma).
  \end{equation}
  It is obtained by the standard construction in coarse index theory: with an 
  isometry $U\colon L^2 (\widetilde{W}_\infty, \widetilde{E}^-)
\to L^2 (\widetilde{W}_\infty, \widetilde{E}^+)$ which is $\Gamma$-equivariant
and which covers the identity in the  
$D^*$-sense (see \cite[Section 1]{PS-Stolz}) the operator $U\chi(D)_+$ is
invertible in
$D^*(\widetilde{W}_\infty)^\Gamma/\relC{\widetilde{W}_\infty}{\widetilde{W}}^\Gamma$
and therefore defines an element in $K_1$ of this $C^*$-algebra. $\Ind^{\rm
  rel}(D_\infty+C_\infty)$ is its image under the boundary map of the
associated long exact sequence in $K$-theory.
We are also interested in the associated \emph{coarse index class}
\begin{equation}\label{coarse-class}
  \coarseind(D,C):=c_*^{-1}\Ind^{{\rm rel}}(D_\infty + C_\infty) \in K_0
  (C^*(\widetilde{W})^\Gamma)
\end{equation}
Here we use the canonical inclusion $c\colon C^*(W)^\Gamma\to
\relC{W_\infty}{W}^\Gamma$ which induces an isomorphism in K-theory,
see \cite[Lemma 1.9]{PS-Stolz}.\end{definition}

Note that the right hand side of  \eqref{coarse-class} is just a notation; we have not really defined an operator
$C$.

\begin{proposition}\label{prop:stab-index-class}
  If $B_\infty$ is the norm limit of $\Gamma$-equivariant bounded finite propagation operators,
  $C_\boundary$ is a perturbation as above and we assume that
  $P_0 B_\infty P_0 - P_0 C_\infty P_0 \in
  \relC{\tilde W}{\tilde W_\infty}^\Gamma$ then $\Ind^{\rm rel}(D_\infty+B_\infty) =
  \Ind^{\rm rel}(D_\infty+C_\infty)$.

  If $[0,1]\ni t\mapsto C^t_{\partial}$ is a continuous family of
    perturbations such 
    that $D_\partial + C_\partial^t$ is invertible for all $t$, then $\Ind^{\rm
      rel}(D_\infty+C_\infty^t)$ is independent of $t$.
      
      Finally, if $\widetilde{M}\to M$ is a $\Gamma$-Galois covering of a closed
      compact manifold $M$ and if $[0,1]\ni t\mapsto G_t$ is a continuous family
      of trivializing perturbations for $D$, as in Definition \ref{def:rho-perturbed}, then 
      $\rho (D+G_t)$ and 
      $\rho_\Gamma (D+G_t)$
      are independent of $t$.
      
\end{proposition}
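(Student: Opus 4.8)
\textbf{Proof proposal for Proposition \ref{prop:stab-index-class}.}
The plan is to handle the three assertions in sequence, the first by a direct comparison of the two relative index classes inside $K_0(\relC{\widetilde{W}_\infty}{\widetilde{W}}^\Gamma)$, the second and third by reducing to the first through a mapping-cylinder/homotopy argument. For the first assertion, I would observe that by Proposition \ref{prop:key} both $\chi(D_\infty+B_\infty)$ and $\chi(D_\infty+C_\infty)$ are involutions modulo $\relC{\widetilde{W}_\infty}{\widetilde{W}}^\Gamma$ (after multiplying by the fixed cover-the-identity isometry $U$ as in the definition of the relative class), so each defines a class in $K_1$ of $D^*(\widetilde{W}_\infty)^\Gamma/\relC{\widetilde{W}_\infty}{\widetilde{W}}^\Gamma$, whose image under the boundary map gives the two relative index classes. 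The point is then that these two $K_1$-classes agree: the linear path $s\mapsto D_\infty+(1-s)C_\infty+sB_\infty$ need not consist of invertible operators, so instead I would compare $\chi(D_\infty+B_\infty)$ and $\chi(D_\infty+C_\infty)$ directly modulo $\relC{\widetilde{W}_\infty}{\widetilde{W}}^\Gamma$. Using Lemma \ref{lem:fDC_loc} applied to $e=1-\chi^2$ and the hypothesis $P_0B_\infty P_0-P_0C_\infty P_0\in\relC{\widetilde{W}}{\widetilde{W}_\infty}^\Gamma$, one shows $\chi(D_\infty+B_\infty)-\chi(D_\infty+C_\infty)\in\relC{\widetilde{W}_\infty}{\widetilde{W}}^\Gamma$ (apply Lemma \ref{lem:fDC_loc} to the two Neumann-series expressions for the resolvents, now comparing $D_\infty+B_\infty$ with $D_\infty+C_\infty$ rather than with the cylinder model), so the two involutions represent the same class modulo the ideal, hence the same boundary class in $K_0(\relC{\widetilde{W}_\infty}{\widetilde{W}}^\Gamma)$.

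For the second assertion, given a continuous family $t\mapsto C_\partial^t$ with $D_\partial+C_\partial^t$ invertible for all $t$, I would run the standard homotopy-invariance argument for boundary maps in $K$-theory. Grafting each $C_\partial^t$ to a global perturbation $C_\infty^t$ as in Definition \ref{def:global-perturbation}, the family $t\mapsto \chi^t(D_\infty+C_\infty^t)$ (with $\chi^t$ equal to the sign function on $\mathrm{spec}(D_\partial+C_\partial^t)$) is a norm-continuous path of involutions modulo $\relC{\widetilde{W}_\infty}{\widetilde{W}}^\Gamma$ in $D^*(\widetilde{W}_\infty)^\Gamma/\relC{\widetilde{W}_\infty}{\widetilde{W}}^\Gamma$; norm-continuity follows from continuity of the family of bounded perturbations together with Proposition \ref{prop:hr-controlled-sharp} and functional calculus, plus the fact that one can choose a single chopping function $\chi$ that is $\pm1$ on $\mathrm{spec}(D_\partial+C_\partial^t)$ locally in $t$ and glue by homotopy on overlaps. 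A continuous path of invertibles in a $C^*$-algebra gives a constant $K_1$-class, and the boundary map then yields $\Ind^{\mathrm{rel}}(D_\infty+C_\infty^t)$ constant in $t$; passing through the isomorphism $c_*$ gives the same for $\coarseind(D,C^t)$ if desired.

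For the third assertion, I would argue that $\rho(D+G_t)\in K_0(D^*(\widetilde{M})^\Gamma)$ (here $M$ is closed, so there is no cylindrical end and the relevant object is the involution $(D+G_t)/|D+G_t|$ of Definition \ref{def:rho-perturbed}) depends continuously on $t$: the family $t\mapsto (D+G_t)/|D+G_t|$ is a norm-continuous path of involutions in $D^*(\widetilde{M})^\Gamma$, again by Proposition \ref{prop:hr-controlled-sharp} applied with $\mathfrak{A}=D^*(\widetilde{M})^\Gamma$, $\mathfrak{J}=C^*(\widetilde{M})^\Gamma$ and functional calculus with a chopping function that is $\pm1$ on the (uniformly bounded away from $0$) spectrum of $D+G_t$. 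A norm-continuous path of idempotents has constant $K_0$-class, so $\rho(D+G_t)$ is independent of $t$, and applying $\tilde u_*$ gives the same for $\rho_\Gamma(D+G_t)$. The main obstacle throughout is establishing norm-continuity of the functional-calculus expressions in the perturbation parameter: one must check that $t\mapsto (D+C_\infty^t)(1+(D+C_\infty^t)^2)^{-1/2}$, or the analogous resolvent, is norm continuous. This is where the precise hypotheses (uniform invertibility of $D_\partial+C_\partial^t$, continuity of $t\mapsto C_\partial^t$ in operator norm) enter, combined with the standard estimate that for self-adjoint $T$ the map $T\mapsto f(T)$ is norm-continuous for $f\in C_0(\reals)$ and, on a region of uniform spectral gap, for $f$ a chopping function as well.
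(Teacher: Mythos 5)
Your treatment of the second and third assertions is essentially the paper's: a norm-continuous path of involutions in the relevant quotient (resp.\ of idempotents in $D^*(\widetilde M)^\Gamma$), constancy of the $K$-theory class, and a single chopping function valid for the whole compact family thanks to the uniform spectral gap. For the first assertion, however, you reject the linear interpolation $A_t=tB_\infty+(1-t)C_\infty$ on the grounds that $D_\infty+A_t$ need not be invertible — and this is exactly the path the paper uses. Invertibility along the path is never needed: each $A_t$ satisfies condition \eqref{def-of-B} (the condition is preserved under convex combinations), so Proposition \ref{prop:key} makes each $\chi(D_\infty+A_t)$ an involution \emph{modulo} $\relC{\tilde W_\infty}{\tilde W}^\Gamma$, which is all that the definition of $\Ind^{\rm rel}$ requires; norm continuity in $t$ then finishes the argument by homotopy invariance. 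Your alternative — showing directly that $\chi(D_\infty+B_\infty)-\chi(D_\infty+C_\infty)\in\relC{\tilde W_\infty}{\tilde W}^\Gamma$ so that the two involutions coincide in the quotient — is logically sound and in fact true, but your justification has a gap: Lemma \ref{lem:fDC_loc} and the Neumann-series argument only control $f(D_\infty+B_\infty)-f(D_\infty+C_\infty)$ for $f\in C_0(\reals)$, and a chopping function is not in $C_0(\reals)$. You would need to treat the generator $\phi(x)=x/\sqrt{1+x^2}$ of $C([-\infty,\infty])$ separately, e.g.\ via the Higson--Roe integral identity
\begin{equation*}
\phi(G)-\phi(G+S)=\frac{1}{\pi}\int_1^{\infty}\frac{u}{\sqrt{u^2-1}}\bigl(R(u)\,S\,R_S(u)+R(-u)\,S\,R_S(-u)\bigr)\,du ,
\end{equation*}
with $S=C_\infty-B_\infty$, using that $S$ is supported near $\tilde W$ and that the resolvents are locally compact multipliers.

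The same omission recurs at the point you yourself flag as the ``main obstacle'': the norm continuity of $t\mapsto\phi(D_\infty+A_t)$. Norm-resolvent continuity handles $(x\pm i)^{-1}$, but the ``standard estimate that $T\mapsto f(T)$ is norm-continuous'' applies to $f\in C_0(\reals)$, not to $\phi$ or to a chopping function. The paper closes this with the same integral identity, which yields the Lipschitz bound $\norm{\phi(G)-\phi(G+S)}\le \frac{2}{\pi}\norm{S}\int_1^\infty\frac{du}{u\sqrt{u^2-1}}$ and hence norm continuity of $t\mapsto\chi(D_\infty+A_t)$ for every $\chi\in C([-\infty,\infty])$. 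With that estimate supplied, your argument for all three assertions goes through; without it, both the continuity claims and your membership claim in the first part are unproven assertions rather than proofs.
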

\begin{proof}
 For the first two statements
  write either $A_t:=t B_\infty+(1-t)C_\infty$ or $A_t=C_\infty^t$. 
  Then $[0,1]\ni
  t\mapsto D_\infty+A_t$ is norm resolvent continuous, as
  $(D_\infty+A_t+i)^{-1}-(D_\infty+A_s+i)^{-1} = (D_\infty
  +A_t+i)^{-1}(A_s-A_t)(D_\infty+A_s+i)^{-1}$ and
  $\norm{(D_\infty+A_t+i)^{-1}}\le 1$. 
  
  As the $C^*$-algebra $C[-\infty,\infty]$ of bounded continuous functions on $\reals$ with limits at $\pm\infty$ is generated by $(x+i)^{-1}$, $(x-i)^{-1}$, $1$ and $\phi(x)=x/\sqrt{1+x^2}$ and the first two are taken care of by norm resolvent continuity, it remains to show that $t\mapsto \phi(D_\infty+A_t)$ is norm continuous to conclude that $t\mapsto \chi(D_\infty+A_t)$ is norm continuous for arbitrary $\chi\in C[-\infty,\infty]$.

 To this end we write 
 $\| \phi (D_\infty + A_t)- \phi (D_\infty + A_s)\|$ as 
 $\| \phi (G)- \phi (G + S_{s,t})\|$, with $G= D_\infty + A_t$ and $S_{s,t}= A_s - A_t$.
 We employ the following identity, see \cite[Proposition
5.9]{higson-roeI} (this identity plays a fundamental role in the proof of 
Proposition \ref{prop:hr-controlled-sharp}). 
\begin{equation*}
\begin{split}
\phi (G)-\phi (G+S_{s,t})= & \frac{1}{\pi}\int_1^{+\infty}
\frac{u}{\sqrt{u^2-1}}R(u) S_{s,t} R_{s,t} (u) du\\
&+  \frac{1}{\pi}\int_1^{+\infty} \frac{u}{\sqrt{u^2-1}}
R(-u) S_{s,t} R_{s,t} (-u) du
\end{split}
\end{equation*}
where  $R(u)= (G+iu)^{-1}$ and $R_{s,t} (u)= (G+S_{s,t} + iu)^{-1}$.
Now, the norm of any of the two resolvent is bounded by $1/|u|$;
thus 
$$\| \phi (G)-\phi (G+S_{s,t}) \|\leq \frac{2}{\pi} \| A_s - A_t\| \left( \int_1^\infty
\frac{u}{\sqrt{u^2-1}} \frac{1}{u^2} du \right) $$
This establishes the required continuity. 
In particular, for any chopping function $\chi$,  $t\mapsto
  \chi(D_\infty+A_t)$ is a {\it norm continuous} family of involutions in
  $D^*(\tilde W_\infty)^\Gamma/\relC{\tilde W}{\tilde W_\infty}^\Gamma$. 
 By
  homotopy invariance of K-theory classes, the first two assertions
  follows.
  
   A similar proof applies to the last statement once we observe that we can use the same chopping function taking values $\pm 1$ on the spectrum of all (invertible!) operators in the compact family $D_\infty+A_t$, $t\in [0,1]$. This follows from the norm continuity of $t\mapsto (D_\infty+A_t)^{-1}$, using $$(D_\infty+A_s)^{-1}= (D_{\infty}+A_t)^{-1}(1+ (A_s-A_t)(D_\infty+A_t)^{-1})^{-1}$$ and a Neumann series argument for the last term.
   
    \end{proof}

%

\subsubsection{Compatibility of index classes}\label{subsub:compatibility}

We now prove the compatibility of this relative index class in coarse geometry
with the index class in the Mishchenko-Fomenko framework.\\
Let $D_\infty$ be a Dirac type
operator on $\widetilde{W}_\infty$ and let 
$\D_\infty$ denote the associated operator on $W_\infty$
with coefficients in the Mishchenko bundle $\mathcal{F}_{{\rm M}}$ .
 Let $\mathcal{C}_{\pa}$ be a  trivializing perturbation for  $\D_\pa$
 and let $\D_\infty+ \C_\infty$ the associated operator on $W_\infty$, obtained
 by applying the same construction explained above for the covering
 $\widetilde{W}$, see Definition \ref{def:global-perturbation}.
 {We concentrate directly on the case in which $\mathcal{C}_{\pa}$
 is smoothing (as far as compatibility is concerned, this is the case we are interested in).}
By \cite[Theorem 6.2]{LPGAFA} and  \cite[Theorem 10.1]{LLP}, $(\D_\infty+ \C_\infty)^+$ is invertible modulo 
$\KK (\mathcal{E}_{{\rm M}})$. Further extensions of these 
results were subsequently given by Wahl in \cite{wahl-aps}.
Consequently, we get an index class
$$\Ind^b_{{\rm MF}} (\D_\infty+\C_\infty)\in K_{n+1} (\KK (\mathcal{E}_{{\rm M}}))=K_{n+1} (C^*_r \Gamma)$$

\begin{remark}
We can apply this construction to the manifold with boundary 
appearing in the definition of a cycle of $L_{n+1} (\ZZ\Gamma)$,
choosing as a Dirac operator $\D$ the signature operator and as a 
trivializing perturbation for the boundary operator $\D_{\pa}$ the operator
$\mathcal{C}_f$, the one induced by an homotopy equivalence $f$.
 We obtain an index class   
$\Ind^b_{{\rm MF}} (\D_\infty+ \C_{f,\infty})\in K_{n+1} (C^*_r \Gamma)$;
this is precisely the index class we have considered in Section \ref{sub:L-map}.
\end{remark}


We have already observed in \cite{PS-Stolz} that there is 
an isomorphism $\KK (\mathcal{E}_{{\rm M}})\cong C^* (\widetilde{W}\subset
    \widetilde{W}_\infty)^\Gamma$. One can prove, as in \cite[Section
    2]{PS-Stolz}, the following proposition.
    
    \begin{proposition}\label{b=coarse}
    Under the canonical isomorphism $$K_{n+1} (\KK (\mathcal{E}_{{\rm M}}))
    \iso  K_{n+1} (C^* (\widetilde{W}\subset
    \widetilde{W}_\infty)^\Gamma)$$ one has the equality
    $$\Ind^b_{{\rm MF}} (\D_\infty+ \C_\infty)=\Ind^{{\rm rel}}(D_\infty +
    C_\infty). $$
    \end{proposition}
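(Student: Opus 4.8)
The statement asserts that two \emph{a priori} different constructions of an index class — one via the Mishchenko--Fomenko calculus on $W_\infty$, one via coarse geometry on $\widetilde{W}_\infty$ — agree under the canonical identification $\KK(\mathcal{E}_{{\rm M}}) \cong C^*(\widetilde{W}\subset\widetilde{W}_\infty)^\Gamma$. The overall strategy is to recognize that both $K$-theory classes are boundary-map images (in the six-term exact sequence of the pair) of \emph{the same} invertible element in the quotient algebra, so that naturality of the boundary map does the work. Concretely, I would first recall the explicit cycle description of $\Ind^b_{{\rm MF}}(\D_\infty+\C_\infty)$: since $(\D_\infty+\C_\infty)^+$ is invertible modulo $\KK(\mathcal{E}_{{\rm M}})$ by \cite[Theorem 6.2]{LPGAFA}, \cite[Theorem 10.1]{LLP}, one has an invertible element in the Calkin-type quotient $\BB(\mathcal{E}_{{\rm M}})/\KK(\mathcal{E}_{{\rm M}})$ represented by $U\chi(\D_\infty+\C_\infty)_+$ for a suitable isometry $U$ and chopping function $\chi$, and $\Ind^b_{{\rm MF}}$ is the connecting-map image of its $K_1$-class. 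On the coarse side, $\Ind^{{\rm rel}}(D_\infty+C_\infty)$ is, by the Definition preceding the proposition, exactly the boundary-map image of $[U\chi(D)_+]\in K_1(D^*(\widetilde{W}_\infty)^\Gamma/\relC{\widetilde{W}_\infty}{\widetilde{W}}^\Gamma)$.

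\textbf{Key steps.} First I would set up the commuting ladder of short exact sequences: the Mishchenko--Fomenko one
\[
0 \to \KK(\mathcal{E}_{{\rm M}}) \to \BB(\mathcal{E}_{{\rm M}}) \to \BB(\mathcal{E}_{{\rm M}})/\KK(\mathcal{E}_{{\rm M}}) \to 0
\]
and the coarse one
\[
0 \to \relC{\widetilde{W}_\infty}{\widetilde{W}}^\Gamma \to D^*(\widetilde{W}_\infty)^\Gamma \to D^*(\widetilde{W}_\infty)^\Gamma/\relC{\widetilde{W}_\infty}{\widetilde{W}}^\Gamma \to 0,
\]
connected by the map $L_\pi$ of Proposition \ref{prop:hs-mult} (extended to $W_\infty$) which carries $\KK(\mathcal{E}_{{\rm M}})$ isomorphically onto $\relC{\widetilde{W}_\infty}{\widetilde{W}}^\Gamma$ — here one uses the isomorphism $\KK(\mathcal{E}_{{\rm M}}) \cong C^*(\widetilde{W}\subset\widetilde{W}_\infty)^\Gamma$ recalled just before the proposition — and $\BB(\mathcal{E}_{{\rm M}})$ into the multiplier algebra, as in \cite[Section 2]{PS-Stolz}. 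Next I would check that $L_\pi$ intertwines the functional calculus: $L_\pi(\chi(\D_\infty+\C_\infty)) = \chi(D_\infty+C_\infty)$, which follows because $L_\pi$ is a $*$-homomorphism on $\BB(\mathcal{E}_{{\rm M}})$ and $D_\infty = L_\pi(\D_\infty)$, $C_\infty = L_\pi(\C_\infty)$ in the appropriate multiplier-algebra sense, so both sides solve the same functional calculus problem. Then the two invertible quotient elements $U\chi(\D_\infty+\C_\infty)_+$ and $U\chi(D_\infty+C_\infty)_+$ correspond under the induced map on quotients, hence have equal $K_1$-classes after transport; finally naturality of the connecting homomorphism with respect to the vertical map of the ladder gives
\[
\Ind^b_{{\rm MF}}(\D_\infty+\C_\infty) = \partial[U\chi(\D_\infty+\C_\infty)_+] \mapsto \partial[U\chi(D_\infty+C_\infty)_+] = \Ind^{{\rm rel}}(D_\infty+C_\infty),
\]
which is the assertion once we identify the $K$-theories via the stated isomorphism.

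\textbf{Main obstacle.} The routine part is the diagram chase; the delicate point is verifying that $L_\pi$ genuinely intertwines the two pictures at the level of the \emph{relative} (not just absolute) algebras — i.e.\ that the local/support conditions built into $C^*(\widetilde{W}\subset\widetilde{W}_\infty)^\Gamma$ match the compact-operator condition in $\KK(\mathcal{E}_{{\rm M}})$ after applying $L_\pi$, and that $\chi(\D_\infty+\C_\infty)_+$ really lands in $\BB(\mathcal{E}_{{\rm M}})$ with the claimed invertibility modulo $\KK$. This is where one must quote \cite[Theorem 6.2]{LPGAFA}, \cite[Theorem 10.1]{LLP} (and Wahl's extensions in \cite{wahl-aps}) for the MF side and Proposition \ref{prop:key} (together with Lemma \ref{lem:ancon}) for the coarse side, and then invoke the parallel argument of \cite[Section 2]{PS-Stolz} to see these two facts are the same fact under $L_\pi$. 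I expect the write-up to consist largely of a pointer to \cite[Section 2]{PS-Stolz} with the remark that the only new ingredient — the presence of the boundary perturbation $\C_\infty$ — is already fully handled by Propositions \ref{prop:hs-mult} and \ref{prop:key} above, so no genuinely new estimate is required.
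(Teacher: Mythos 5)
Your proposal is correct and is essentially the argument the paper intends: the paper's entire proof is the single sentence ``One can prove, as in [PS-Stolz, Section 2], the following proposition,'' and the ladder-of-extensions/naturality-of-the-boundary-map argument you describe, with Proposition \ref{prop:key} and Lemma \ref{lem:ancon} supplying the coarse-side membership statements and \cite{LPGAFA}, \cite{LLP} the Mishchenko--Fomenko ones, is exactly what that reference carries out in the unperturbed case. You even correctly anticipated that the write-up would be a pointer to \cite[Section 2]{PS-Stolz} with the perturbation handled by the preceding propositions.
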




\begin{remark}\label{remark:relative-formula}
The index class $\Ind^b_{{\rm MF}} (\D_\infty+ \C_\infty)$ heavily depends 
on the choice of trivializing perturbation $\C_{\pa }$. (Consequently, the same is true for
$\Ind^{{\rm rel}}(D_\infty +
    C_\infty)$.)
 Indeed, it is proved in
\cite[Theorem 6]{LP03}, inspired by \cite{MPI}, that if $\C_{\pa }^\prime$ is a different
perturbation then
\begin{equation}\label{relative-index-formula}
\Ind^b_{{\rm MF}} (\D_\infty+ \C_\infty)- \Ind^b_{{\rm MF}} (\D_\infty+ \C_\infty^\prime)
=
[\mathcal{P}^\prime- \mathcal{P}]\;\;\text{ in }\;\;K_{\dim W} (C^*_r \Gamma)
\end{equation}
with $[\mathcal{P}^\prime- \mathcal{P}]$ the difference class of the two
projections $\mathcal{P}=\chi_{[0,\infty)} (\D_{\pa}+\C)$ and $\mathcal{P}^\prime=
\chi_{[0,\infty)} (\D_{\pa}+\C^\prime)$.

\end{remark}

\section{Delocalized APS-index theorem for perturbed operators}

Let $(\widetilde{W},g_{\widetilde{W}})$ be an even dimensional Riemannian orientable manifold with boundary
$\boundary \widetilde{W}$. Assume that $\Gamma$ acts freely, cocompactly and
isometrically on $\widetilde{W}$. {We consider the 
associated manifold with cylindrical ends
$\widetilde{W}_\infty$.
Let $C_{\pa}$ be a trivializing 
perturbation as before (i.e.~the norm limit of $\Gamma$-equivariant bounded self-adjoint finite propagation operators such that $D_{\pa}+C_{\pa}$ is $L^2$-invertible) and let 
 $C_\infty$ be the associated
perturbation on  $\widetilde{W}_\infty$,
as in Section \ref{subsub:cylindrical-classes}.}
Recall the  coarse index class
$ \coarseind(D,C) \in K_0
  (C^*(\widetilde{W})^\Gamma)$ defined in  \eqref{coarse-class}. Our main tool
  in this paper will be the following ``delocalized APS-index theorem for
  perturbed operators''.
\begin{theorem}
\label{theo:k-theory-deloc} 
The following equality holds
\begin{equation}\label{k-theory-deloc}
\iota_* ( \coarseind(D,C) )= j_*(\rho( D_{\pa} + C_{\pa})) \quad\text{in}\quad K_{0}
(D^*(\widetilde{W})^\Gamma).
\end{equation}
Here, we use $j\colon D^*(\boundary \widetilde{W})^\Gamma\to
D^*(\widetilde{W})^\Gamma$ induced by the inclusion 
$\boundary \widetilde{W}\to \widetilde{W}$ and $\iota\colon C^*(\widetilde{W})^\Gamma\to D^*(\widetilde{W})^\Gamma$ the inclusion.
\end{theorem}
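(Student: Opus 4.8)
The strategy is a variant of the mapping-cone / double-manifold argument used for the Dirac operator in \cite{PS-Stolz}, adapted to accommodate the fact that the perturbed operator $D_\infty+C_\infty$ no longer has unit propagation speed. First I would set up the relevant commutative diagram of $C^*$-algebras and their $K$-theory six-term sequences: on one side the pair $C^*(\widetilde W)^\Gamma\subset D^*(\widetilde W)^\Gamma$ with quotient $D^*/C^*\cong D^*(\widetilde W)^\Gamma/C^*(\widetilde W)^\Gamma$; on the other side the coarse picture on $\widetilde W_\infty$ with the ideal $\relC{\widetilde W_\infty}{\widetilde W}^\Gamma$ inside $D^*(\widetilde W_\infty)^\Gamma$, together with the canonical isomorphism $c_*\colon K_*(C^*(\widetilde W)^\Gamma)\xrightarrow{\sim} K_*(\relC{\widetilde W_\infty}{\widetilde W}^\Gamma)$ from \cite[Lemma 1.9]{PS-Stolz}. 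By Proposition \ref{prop:key}, $\chi(D_\infty+C_\infty)$ is an involution modulo $\relC{\widetilde W_\infty}{\widetilde W}^\Gamma$ when $\chi$ is a chopping function equal to $\pm1$ on the spectrum of $D_\partial+C_\partial$, and $\Ind^{\mathrm{rel}}(D_\infty+C_\infty)$ is the image of the corresponding $K_1$-class under the boundary map; $\coarseind(D,C)=c_*^{-1}\Ind^{\mathrm{rel}}(D_\infty+C_\infty)$.

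The heart of the argument is to compute $\iota_*\coarseind(D,C)$ directly as an element of $K_0(D^*(\widetilde W)^\Gamma)$ by tracking the involution $\chi(D_\infty+C_\infty)$ through the inclusion of algebras, and to recognize the result as the image of $\rho(D_\partial+C_\partial)$ under $j_*$. The key geometric input is the decomposition $\widetilde W_\infty = \widetilde W \cup_{\boundary\widetilde W} ([0,\infty)\times\boundary\widetilde W)$ together with the comparison, established in Lemma \ref{lem:fDC_loc}, between $P_0 f(D_\infty+C_\infty) P_0$ and $P_0 f(\bar D+\bar C)P_0$ on the cylindrical end, where $\bar D+\bar C$ is the translation-invariant (hence invertible) operator on $\RR\times\boundary\widetilde W$. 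Concretely, I would run the standard ``cut along the cylinder'' argument: the class $\iota_*\coarseind(D,C)$ in $K_0(D^*(\widetilde W)^\Gamma)$ is, up to the identifications above, represented by the idempotent $\frac12(\chi(D_\infty+C_\infty)+1)$ restricted near $\widetilde W$ and compared to $\frac12(\chi(\bar D+\bar C)+1)$ on the semicylinder; because $\bar D+\bar C$ is invertible, the latter is (a graph projection giving) precisely the image under $j_*$ of the rho-idempotent $\rho(D_\partial+C_\partial)=[\frac12(\frac{D_\partial+C_\partial}{|D_\partial+C_\partial|}+1)]\in K_0(D^*(\boundary\widetilde W)^\Gamma)$, the point being that $\bar D+\bar C$ on the cylinder ``is'' $D_\partial+C_\partial$ tensored with the translation-invariant operator on $\RR$, and the $D^*$-algebra of the cylinder retracts onto that of the boundary. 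This identification is essentially the content of the already-established Lemmas \ref{lem:fD_loc}, \ref{lem:fDC_loc} and Proposition \ref{prop:key}, reorganized so that the boundary map in $K$-theory sends the cylindrical piece to $j_*\rho(D_\partial+C_\partial)$.

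I expect the main obstacle to be the careful bookkeeping of the two roles played by $[0,\infty)\times\boundary\widetilde W$ — as a subset of $\widetilde W_\infty$ and as a subset of $\RR\times\boundary\widetilde W$ — and making the ``freely identify sections supported there'' slogan from Proposition \ref{prop:key} into a genuine $K$-theory computation rather than a mod-$C^*$ statement. In particular one must verify that the comparison of Lemma \ref{lem:fDC_loc} is compatible with the boundary map of the six-term sequence and with the isomorphism $c_*$, i.e.\ that passing from $\Ind^{\mathrm{rel}}$ on $\widetilde W_\infty$ to a class on $\widetilde W$ and then including into $D^*(\widetilde W)^\Gamma$ genuinely lands on $j_*$ of the boundary rho-class; this requires a naturality-of-boundary-maps argument and the homotopy invariance established in Proposition \ref{prop:stab-index-class} (to move freely between chopping functions and between $C_\partial$ and homotopic perturbations). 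A secondary technical point is the absence of finite propagation speed for $D_\infty+C_\infty$: everywhere that a classical unit-propagation argument would be invoked (as in Lemma \ref{lem:fD_loc}) one must instead route through the Neumann-series comparison of Lemma \ref{lem:fDC_loc}, which is why that lemma was proved in the generality of arbitrary norm limits $B$ of finite-propagation $\Gamma$-operators. Once these identifications are in place the theorem follows by comparing the two resulting idempotents in $K_0(D^*(\widetilde W)^\Gamma)$.
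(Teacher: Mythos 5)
Your reduction to the cylinder is essentially the paper's first step, and your diagnosis of the technical issues there (the double role of $[0,\infty)\times\pa\widetilde W$, compatibility with the boundary map and with $c_*$, routing around the loss of unit propagation speed via Lemma \ref{lem:fDC_loc}) matches what the paper actually does in Section \ref{sec:proof}, including the proof of \eqref{key-for-chopping}. The gap is in the last step. You claim that, once the class is pushed to the semicylinder, the identity $\pa [P_0\chi(\overline D+\overline C)_+P_0]=j_{\pa}\,\rho(D_\pa+C_\pa)$ is ``essentially the content of the already-established Lemmas \ref{lem:fD_loc}, \ref{lem:fDC_loc} and Proposition \ref{prop:key}, reorganized'', because $\overline D+\overline C$ is $D_\pa+C_\pa$ extended translation-invariantly and the $D^*$-algebra of the cylinder retracts onto that of the boundary. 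This is not a computation of the boundary map: those lemmas only give statements modulo the ideal $C^*(\widetilde W\subset\widetilde W_\infty)^\Gamma$ (they are what makes the relative index class well defined and supported near $\widetilde W$), and the isomorphism $j_\pa$ tells you the two $K$-groups agree, not which class in $K_0(D^*(\pa\widetilde W)^\Gamma)$ the boundary map actually produces.

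Computing that boundary map is the heart of the paper's proof (Theorem \ref{theo:cyl} together with Sections \ref{sec:V_covers} and \ref{sec:belong_to_D}). One deforms $\chi(\overline D+\overline C)_+$ to $\frac{|D_\pa+C_\pa|+\pa_t}{(D_\pa+C_\pa)-\pa_t}$, exhibits an explicit parametrix $Q$ for $P_0\frac{|D_\pa+C_\pa|+\pa_t}{(D_\pa+C_\pa)-\pa_t}P_0$ whose defect is the projection $\Pi=V\chi_{[0,\infty)}(D_\pa+C_\pa)V^*$ built from the Poisson-type operator $(Vs)(t)=\sqrt{2|D_\pa+C_\pa|}\,e^{-t|D_\pa+C_\pa|}s$, and concludes that the boundary map sends the cylindrical class to $[V\chi_{[0,\infty)}(D_\pa+C_\pa)V^*]$. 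To identify this with $j_{\pa}\rho(D_\pa+C_\pa)$ one must prove that $V$ covers the inclusion $\pa\widetilde W\hookrightarrow[0,\infty)\times\pa\widetilde W$ in the $D^*$-sense, and that $\frac{|D_\pa+C_\pa|+\pa_t}{(D_\pa+C_\pa)-\pa_t}$ lies in $D^*(\RR\times\pa\widetilde W)^\Gamma$ (Propositions \ref{prop:v-covers} and \ref{prop:deformed-in-d}). Because $D_\pa+C_\pa$ has no unit propagation speed, neither statement follows from the standard wave-operator arguments; the paper needs the chain of approximations comparing $V$ with its unperturbed analogue $V_D$ (Lemmas \ref{lem:opnorm}--\ref{lem:comp_midterm}) and the Sobolev--Rellich analysis of Section \ref{sec:belong_to_D}. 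None of this is present, or even flagged, in your outline, so the proposal as written does not yet prove the theorem.
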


\begin{corollary}\label{corollary-main}
  By functoriality, using the canonical $\Gamma$-map $u\colon \widetilde{W}\to E\Gamma$
  we have 
  \begin{equation*}
   \iota_* u_*(\coarseind (D,C))  =\rho_\Gamma( D_{\pa} + C_{\pa})
     \quad\text{in}\quad K_0 (D^*_\Gamma). 
 \end{equation*}
 If we define $\coarseind_\Gamma (D,C):=u_* (\coarseind (D,C)) \quad\text{in}\quad
 K_0(C^*_\Gamma)$
 then the last equation reads
  \begin{equation}\label{main-index-eq}
   \iota_* (\coarseind_\Gamma (D,C))  =\rho_\Gamma(D_{\pa} + C_{\pa} )
   \quad\text{in}\quad K_0(D^*_\Gamma).
 \end{equation}
\end{corollary}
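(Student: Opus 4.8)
The plan is to deduce the corollary from the delocalized APS index theorem \eqref{k-theory-deloc} by a purely formal functoriality argument: one pushes both sides of \eqref{k-theory-deloc} forward along the classifying $\Gamma$-map $u\colon \widetilde{W}\to E\Gamma$. The input is the functoriality of the Roe algebra and of the structure algebra, $X\mapsto C^*(X)^\Gamma$ and $X\mapsto D^*(X)^\Gamma$, with respect to equivariant maps, as recalled in \cite[Section 1]{PS-Stolz}. Since $\Gamma$ acts freely and cocompactly on $\widetilde{W}$, both $u$ and its restriction $u|_{\pa\widetilde{W}}\colon \pa\widetilde{W}\to E\Gamma$ are admissible, and hence induce $C^*$-homomorphisms $u_*\colon C^*(\widetilde{W})^\Gamma\to C^*_\Gamma$ and $u_*\colon D^*(\widetilde{W})^\Gamma\to D^*_\Gamma$ (similarly on the boundary) that are compatible with the ideal inclusions $\iota$.

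First I would record the two $K$-theoretic compatibilities needed. The commuting square relating $(\iota,u)$ on $\widetilde{W}$ with $(\iota,u)$ on $E\Gamma$ yields $u_*\circ\iota_* = \iota_*\circ u_*$ on $K$-theory. Next, applying functoriality to the composition $\pa\widetilde{W}\hookrightarrow\widetilde{W}\xrightarrow{u}E\Gamma$ — which is exactly the classifying map of the boundary covering $\pa\widetilde{W}\to\pa W$ — gives $u_*\circ j_* = (u|_{\pa\widetilde{W}})_*$ as homomorphisms $K_0(D^*(\pa\widetilde{W})^\Gamma)\to K_0(D^*_\Gamma)$, where $j$ is the map of Theorem \ref{theo:k-theory-deloc}.

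Then I would apply $u_*\colon K_0(D^*(\widetilde{W})^\Gamma)\to K_0(D^*_\Gamma)$ to both sides of \eqref{k-theory-deloc}. On the left, $u_*\iota_*(\coarseind(D,C)) = \iota_*u_*(\coarseind(D,C)) = \iota_*(\coarseind_\Gamma(D,C))$ by the first compatibility and the definition $\coarseind_\Gamma(D,C):=u_*(\coarseind(D,C))$. On the right, $u_*j_*(\rho(D_{\pa}+C_{\pa})) = (u|_{\pa\widetilde{W}})_*\,\rho(D_{\pa}+C_{\pa}) = \rho_\Gamma(D_{\pa}+C_{\pa})$, the last equality being precisely the definition \eqref{real-rho} of $\rho_\Gamma$ in Definition \ref{def:rho-perturbed}, applied to the boundary covering with its canonical map to $E\Gamma$. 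Equating the two sides gives \eqref{main-index-eq}, and the first displayed identity of the corollary is the same statement written before the abbreviation $\coarseind_\Gamma$ is introduced.

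The only step that is more than bookkeeping is justifying that $u$ genuinely induces the displayed homomorphisms in a manner compatible with $\iota$ and $j$; this rests on the precise model of $E\Gamma$ (and of $D^*_\Gamma = D^*(E\Gamma)^\Gamma$) fixed in \cite{PS-Stolz} and on the fact that cocompactness of the $\Gamma$-action on $\widetilde{W}$ makes every $\Gamma$-equivariant continuous map out of $\widetilde{W}$ an admissible morphism. Granting that functoriality, the corollary is immediate; I do not expect any serious obstacle here.
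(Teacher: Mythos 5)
Your proposal is correct and coincides with the paper's own argument: the corollary is deduced from Theorem \ref{theo:k-theory-deloc} purely by pushing forward along $u_*$, using $u_*\circ\iota_*=\iota_*\circ u_*$, $u_*\circ j_*=(u|_{\pa\widetilde{W}})_*$, and the definitions of $\coarseind_\Gamma$ and $\rho_\Gamma$ in \eqref{real-rho}. The paper offers no further detail beyond the phrase ``by functoriality,'' so your spelled-out version is exactly the intended proof.
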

 
\noindent 
We prove these results in Section \ref{sec:proof}.

\begin{remark}
Consider  
$B_\infty$,   a bounded self-adjoint $\Gamma$-equivariant odd
 operator on $W_\infty$ satisfying  \eqref{def-of-B}, i.e.
$B_\infty\;\text{{is a norm limit of  $\Gamma$-equivariant finite propagation operators}}$
and  $ 
  P_0B_\infty P_0 - P_0 C_\infty P_0\in \relC{\tilde W_\infty}{\tilde
 W}^\Gamma$.
{By Proposition \ref{prop:key}}, we have the 
 coarse index class
 $ \coarseind(D,B) := c_*^{-1}\Ind^{{\rm rel}}(D_\infty + B_\infty) \in K_{n+1}
  (C^*(\widetilde{W})^\Gamma)$.
  Since  by Proposition \ref{prop:stab-index-class}
  $\Ind(D, B)=\Ind(D, C)$,
  we also have
  \begin{equation}\label{main-index-eq-pert}
  \iota_* ( \coarseind_\Gamma (D,B) )= \rho_\Gamma(D_{\pa} + C_{\pa} ).
  \end{equation}
\end{remark}

The above remark  brings us immediately to a version
of bordism invariance for $\rho$-classes:
\begin{corollary}\label{corol:part_index_special}
  Let $\widetilde{M}_1$ and $\widetilde{M}_2$ be two free cocompact 
$\Gamma$-manifolds of dimension $n$. Let $D_j$ be a
$\Gamma$-equivariant 
Dirac type
operator on $\widetilde M_j$, $C_j\in C^*(\widetilde{M}_j)^\Gamma$ a trivializing
perturbations ($j=1,2$).
Assume that $D_1$ and $D_2$ are bordant,
 i.e.~there exists a
   manifold $\widetilde{W}$ with free cocompact $\Gamma$-action
  such 
  that $\boundary \widetilde{W}=\widetilde{M}_1\disjointunion \widetilde{M}_2$
  and $D$ a $\Gamma$-equivariant Dirac type operator on $W$ with product
  structure near the boundary (in the sense of Dirac type operators) and with
  boundary operator $D_1\disjointunion - D_2$. Moreover, let $B_\infty$ be a
  global perturbation
    which is a norm limit of finite propagation operators and such that 
$  P_0 B_\infty P_0  -P_0 C^{\disjointunion}_\infty P_0\in \relC{\tilde W_\infty}{\tilde
 W}^\Gamma$, where 
 $C_\infty^{\disjointunion}:= C_{1,\infty}\disjointunion (- C_{2,\infty})$.

Assume in addition that $\coarseind_\Gamma (D,B) =0$ in 
 $ K_{n+1} (C^*_\Gamma)$. Note that this is satisfied e.g.~if $B_\infty$ is a
 \emph{trivializing} perturbation of $D$, i.e.~if $D+B_\infty$ is invertible.
Then
  \begin{equation*}
    \rho_\Gamma (D_1 + C_1) = \rho_\Gamma (D_2 + C_2) \in K_{n+1} (D^*_\Gamma).
  \end{equation*}
\end{corollary}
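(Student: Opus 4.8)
\textbf{Proof plan for Corollary \ref{corol:part_index_special}.}
The plan is to reduce the statement to the main delocalized APS index theorem \ref{theo:k-theory-deloc} applied on the bordism $\widetilde{W}$, together with the assumed vanishing of the index class. First I would set up the cylindrical-end manifold $\widetilde{W}_\infty$ and consider the disjoint-union perturbation $C_\partial^\sqcup := C_1 \disjointunion (-C_2)$ of the boundary operator $D_\partial = D_1 \disjointunion (-D_2)$; since each $C_j$ is a trivializing perturbation for $D_j$ on the closed manifold $\widetilde{M}_j$, the operator $D_\partial + C_\partial^\sqcup$ is $L^2$-invertible, so $C_\partial^\sqcup$ is an admissible boundary perturbation in the sense required throughout Section \ref{subsub:coarse-classes}. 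The global perturbation $B_\infty$ is assumed to satisfy \eqref{def-of-B} relative to $C_\infty^\sqcup := C_{1,\infty}\disjointunion(-C_{2,\infty})$, so Proposition \ref{prop:key} applies and the coarse index class $\coarseind_\Gamma(D,B)\in K_{n+1}(C^*_\Gamma)$ is defined; by Proposition \ref{prop:stab-index-class} it equals $\coarseind_\Gamma(D,C^\sqcup)$.

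Next I would apply Theorem \ref{theo:k-theory-deloc} (in the form \eqref{main-index-eq-pert} recorded in the preceding remark) on $\widetilde{W}$, giving
\begin{equation*}
\iota_*\bigl(\coarseind_\Gamma(D,B)\bigr) = \rho_\Gamma(D_\partial + C_\partial^\sqcup) \quad\text{in } K_{n+1}(D^*_\Gamma).
\end{equation*}
The left-hand side vanishes by the hypothesis $\coarseind_\Gamma(D,B)=0$. So it remains to identify $\rho_\Gamma(D_\partial + C_\partial^\sqcup)$ with $\rho_\Gamma(D_1+C_1) - \rho_\Gamma(D_2+C_2)$. This is a straightforward additivity-under-disjoint-union computation for the rho-class: the signature operator on $\widetilde{M}_1 \disjointunion \widetilde{M}_2$ with the grading/orientation conventions of \cite{Wahl_higher_rho} splits as $D_1 \oplus (-D_2)$, the perturbation splits accordingly, the involution $(D_\partial + C_\partial^\sqcup)/|D_\partial + C_\partial^\sqcup|$ splits as a block-diagonal involution, and under $K_*(D^*(\widetilde{M}_1)^\Gamma) \oplus K_*(D^*(\widetilde{M}_2)^\Gamma) \to K_*(D^*_\Gamma)$ the contribution of the second component acquires a sign because of the orientation reversal on $\widetilde{M}_2$. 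Hence $\rho_\Gamma(D_\partial + C_\partial^\sqcup) = \rho_\Gamma(D_1+C_1) - \rho_\Gamma(D_2+C_2)$, and combining with the vanishing of the left-hand side gives the claim.

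I expect the main obstacle to be purely bookkeeping rather than conceptual: one must be careful that the perturbations $C_j$ on the \emph{closed} manifolds $\widetilde{M}_j$ genuinely extend (via the collar, Definition \ref{def:global-perturbation}) to the $C_{j,\infty}$ used in forming $C_\infty^\sqcup$, so that the compatibility condition $P_0 B_\infty P_0 - P_0 C_\infty^\sqcup P_0 \in \relC{\tilde W_\infty}{\tilde W}^\Gamma$ is exactly the hypothesis in the statement; and one must check that the sign appearing under orientation reversal of $\widetilde{M}_2$ is consistent with the product-structure convention for the boundary operator of $D$ on $\widetilde{W}$ being $D_1 \disjointunion (-D_2)$. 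The remark following Definition \ref{def:rho-perturbed} about the correct class of isometries $U$ in the even-dimensional case is relevant here if $n$ is even, but the argument is otherwise identical. Once these identifications are pinned down, the corollary follows immediately from Theorem \ref{theo:k-theory-deloc} and the additivity of $\rho_\Gamma$.
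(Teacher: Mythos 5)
Your proposal is correct and follows essentially the same route as the paper's own (very short) proof: apply the delocalized APS index theorem in the form \eqref{main-index-eq-pert}, use the hypothesis $\coarseind_\Gamma(D,B)=0$, and conclude via additivity of the rho-class under disjoint union (with the sign coming from the orientation reversal on $\widetilde{M}_2$). The extra care you take with the compatibility of $B_\infty$ with $C_\infty^{\disjointunion}$ and with Propositions \ref{prop:key} and \ref{prop:stab-index-class} is exactly the bookkeeping the paper leaves implicit.
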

\begin{proof}
  The rho-class is additive for disjoint union. By assumption, $\coarseind_\Gamma (D,B) =0$. The assertion now follows
  directly from \eqref{main-index-eq-pert}.
\end{proof}

\begin{remark}\label{rem:symmetric}
We can now go back to Example \ref{example:LLP-rho} and 
explain why \begin{equation*}
\rho_\Gamma (D+\mathcal{S}_0)=\rho_\Gamma (D+\mathcal{S}_1) \quad \text{if } \mathcal{S}_0, 
\mathcal{S}_1 \text{ are two symmetric trivializing perturbations}.
\end{equation*}
Recall that we are considering $\widetilde{M}\to M$, a $\Gamma$-cover such that the differential-form Laplacian on the covering is $L^2$-invertible in degree $m$.
The above equality takes place in $K_{\dim M+1} (D^*_\Gamma)$.\\
We consider $[0,1]\times \widetilde{M}$ with associated manifold with
cylindrical ends.
By the delocalized APS-index theorem \ref{theo:k-theory-deloc} we have that
$$\rho_\Gamma (D+\mathcal{S}_0)-\rho_\Gamma (D+\mathcal{S}_1) = 
\iota_* (\Ind_\Gamma (D_\infty + \mathcal{S}^{0,1}_\infty))$$
with $\mathcal{S}^{0,1}_\infty$ the grafted perturbation associated to 
$\mathcal{S}_0$ on $\{0\}\times \widetilde{M}$ and $\mathcal{S}_1$ on 
$\{1\}\times \widetilde{M}$.
Using in a fundamental way the hypothesis that 
$\mathcal{S}^j$ are symmetric we have, see \cite[Proposition 4.4]{LPAGAG}, \cite{Wahl-K-theory}
$$
\Ind_\Gamma (D_\infty + \mathcal{S}^{0,1}_\infty) =
\Ind_\Gamma (D_\infty + \mathcal{S}^{0,0}_\infty)
\text{ in } K_* (C^*_r \Gamma)= K_* (C^*_\Gamma)
$$
Thus 
\begin{equation*}
\begin{split}
\rho_\Gamma (D+\mathcal{S}_0)-\rho_\Gamma (D+\mathcal{S}_1)&=
(\rho_\Gamma (D+\mathcal{S}_0)-\rho_\Gamma (D+\mathcal{S}_1))-
(\rho_\Gamma (D+\mathcal{S}_0)-\rho_\Gamma (D+\mathcal{S}_0))\\
&=\iota_* (\Ind_\Gamma (D_\infty + \mathcal{S}^{0,1}_\infty)) - \iota_*
(\Ind_\Gamma (D_\infty + \mathcal{S}^{0,0}_\infty)) =0.
\end{split}
\end{equation*}
\end{remark}

\begin{remark}
  As in \cite{PS-Stolz}, {it is plausible that 
  the methods which prove the delocalized APS-index
  theorem for perturbed operators, also yield a secondary partitioned manifold
  index theorem} \footnote{{We take this opportunity
  to point out that the secondary partitioned manifold index theorem 
  proved in \cite{PS-Stolz} can in fact be sharpened, with essentially the same proof,
   from an equality of {\it universal}
  rho-classes to an equality
  of rho-classes. We thank Vito Felice Zenobi and Rudolf Zeidler
  for alerting us of this sharpening.}}. {We leave the precise formulation and proof to future investigations.
  A potential application of such a theorem would be  the following}: 
  assume that $M_1,M_2$ are two
  oriented complete Riemannian manifolds with free proper isometric
  $\Gamma$-action and with $\Gamma$-invariant and $\Gamma$-cocompact
  separating 
  hypersurfaces $N_1\subset M_1$, $N_2\subset M_2$ with product
  neighborhoods. Let $f\colon M_1\to M_2$ be a $\Gamma$-equivariant homotopy
  equivalence which restricts to a homotopy equivalence $f|\colon N_1\to
  N_2$. Assume that $f$ coarsely preserves the signed distance to the
  hypersurfaces, 
  i.e.~that the two functions $d_1,d_2\circ f\colon M_1\to \reals$ with
  $d_1(x)= \epsilon_1(x)d(x,N_1)$ and $d_2(y)=\epsilon_2(y)d(y,N_2)$ are
  coarsely equivalent to each other. Here $\epsilon_1(x)=\pm 1$ depending
  whether $x$ lies in the positive or negative half of $M$.

  We call such a situation a ``partitioned manifold homotopy equivalence''. 

  In this situation, {possibly with further hypothesis 
  on $f$}, one  should obtain a partitioned manifold $\rho$-class
  $\rho^{{\rm pm}}(f)\in K_*(D^*_\Gamma)$. Similarly, we defined in
  \ref{example:rho-he} $\rho_\Gamma(f|_{N_1})\in K_*(D^*_\Gamma)$. The
  secondary partitioned manifold index theorem would claim  that these two classes
  coincide. 

 { As a potential application, assume that $f\colon N_1\to N_2$
 and $g:X_1 \to N_2$  are homotopy
  equivalences and that $\rho_\Gamma(f)\ne \rho_\Gamma(g)$
  so that $(N_1\xrightarrow{f} N_2)$ is not h-cobordant to $(X_1\xrightarrow{g} N_2)$.
  Then, also between the  stabilizations $
  (N_1\times \reals\xrightarrow{f\times \id_\reals} N_2\times \reals)$ 
  and $
  (X_1\times \reals\xrightarrow{g\times \id_\reals} N_2\times \reals)$ 
 there is no Riemannian h-cobordism such that the inclusion maps of the
 boundaries are
 continuous coarse homotopy equivalences.  Otherwise, we expect
$\rho^{{\rm pm}} (f\times
  \id_\reals)= \rho^{{\rm pm}} (g\times
  \id_\reals)$; 
  however, by the secondary index theorem, this would contradict the hypothesis
  $\rho_\Gamma(f)\ne \rho_\Gamma(g)$.}
\end{remark}

\section{Mapping the surgery sequence to K-Theory}

\subsection{The structure set and the map $\rho_{\Gamma}$}
Let $V$ be an $n$-dimensional 
  smooth closed oriented manifold with fundamental group $\Gamma$, $n>4$.

\begin{definition}
The \emph{structure set}
  $\mathcal{S}^{h}(V)$ consists of equivalence classes $[M\xrightarrow{f} V]$,
  where $M$ is a smooth closed oriented manifold, and  $f$ is an
  homotopy equivalence.  $(M_1\xrightarrow{f_1} V)$ and
  $(M_2\xrightarrow{f_2}V)$ are equivalent if there is an h-cobordism $X$
  between $M_1$ and $M_2$ and a map $F\colon X\to V\times [0,1]$ (necessarily a homotopy
  equivalence) such that $F|_{M_1}=f_1$ and $F|_{M_2}=f_2$.
  \end{definition}

Let $V$ be as above. Let $u\colon V\to B\Gamma$ be
the classifying map for the universal cover of $V$ and let $\tilde{u}\colon \widetilde{V}\to E\Gamma$ be
a $\Gamma$-equivariant lift of $u$.
  Let  $M\xrightarrow{f}V$ be a cycle representing an element $[M\xrightarrow{f}V]$ in 
  $\mathcal{S}^{h}(V)$. As in example \ref{example:rho-he} we consider $Z= M\disjointunion (-V)$ with the obvious classifying map $u_Z
  \colon Z\to B\Gamma$ 
  induced  by $u$ and by $u\circ f$. We observe that $Z$ comes with a map $\phi$ to $V$, 
  $\phi:= f\disjointunion (-\Id_V)$,  which
  is covered by a $\Gamma$-equivariant map $\tilde{\phi}\colon \widetilde{Z}\to \widetilde{V}$.
 \begin{definition}\label{def:rho}
  We define a map 
   $\rho
  \colon \mathcal{S}^{h}(V)\mapsto 
   K_{n+1} (D^* (\widetilde{V})^\Gamma)$ as follows
 \begin{equation}\label{rho-map-0}
  \mathcal{S}^{h}(V)\ni  [M\xrightarrow{f}V]\to 
  \rho(f) \in K_{n+1}
    (D^* (\widetilde{V})^\Gamma)
    \end{equation}
  where we recall that $D$  is the signature operator
  on the covering $\widetilde{Z}:= u_Z^* E\Gamma$, $C_f$ is the smoothing
  trivializing perturbation constructed using the homotopy equivalence $f$
  and $\rho(f) := \tilde{\phi}_* (\rho (D + C_f))\in K_{n+1}
    (D^* (\widetilde{V})^\Gamma)$.
\end{definition}

\begin{definition}\label{def:rho-gamma}
We define 
  \begin{equation}\label{rho-map}
  \rho_\Gamma
  \colon \mathcal{S}^{h}(V)\to 
   K_{n+1} (D^*_\Gamma);  [M\xrightarrow{f}V]\mapsto \rho_\Gamma(f)
    \end{equation}
    where we recall that $$\rho_\Gamma(f):= \rho_\Gamma (D + C_f):= (\tilde{u}_Z)_* (\rho (D + C_f))=
    \tilde{u}_* (\rho (f))\,.$$
  If necessary, we shall denote the right hand sides of 
  \eqref{rho-map-0} and 
\eqref{rho-map} as $\rho  [M\xrightarrow{f}V]$  and $\rho_\Gamma  [M\xrightarrow{f}V]$
respectively. 
 \end{definition}

From now on we assume our manifold $V$ to be odd dimensional.

\begin{remark}\label{remark:odd-versus-even}
 Notice
that we follow here different conventions with respect to \cite{higson-roeIII};
indeed we follow the conventions of \cite{PS1}, slightly modified  as in 
\cite{Wahl_higher_rho}. In particular, the operator we use on an odd
dimensional manifold is not the odd signature operator as in Atiyah, Patodi,
Singer (and the one used by topologists to define the signature K-homology
class, compare \cite{Rosenberg-Weinberger}). It is rather the direct sum of two
(unitarily equivalent) versions of this operator, and comes up as the boundary
operator of the signature operator on an even dimensional manifold.
 This results in a possible loss of two-torsion information. On the other
 hand, with this modification our diagrams commute without inverting $2$.

  However, 
 a factor of $2$ would with our conventions show up  in the same portion of
 the sequence for an even dimensional manifold $V$.

It would be intersting to improve the constructions of our invariants, in
particular the rho-invariant, so that they deal with the signature operator on
odd dimensional manifolds as of Atiyah-Patodi-Singer (potentially containing
additional $2$-torsion information). This involves understanding how the
Hilsum-Skandalis-Wahl perturbation can be made compatible with the splitting
of the operator alluded to above. It also involves establishing bordism
invariance of the rho-invariant to be obtained that way. This is a non-trivial
task, as shows \cite{Rosenberg-Weinberger}, where the corresponding problem of
bordism invariance of the K-homology class is solved.

Finally, we believe that our diagram can be extended to the left, provided
that one 
inverts $2$ (and multiplies all maps with the appropriate powers of $2$,
similar to \cite{higson-roeIII}), and in such a situation the powers of $2$
discussed above will inevitably appear to make all squares commutative.

\end{remark}


\begin{proposition}\label{prop:well-rho}
The maps $$\mathcal{S}^{h}(V)\ni  [M\xrightarrow{f}V]\mapsto 
  \rho_\Gamma (f)\in K_{n+1} (D^*_\Gamma)\;\;\;\text{and}\;\;\;
  \mathcal{S}^{h}(V)\ni  [M\xrightarrow{f}V]\mapsto 
  \rho (f)\in K_{n+1} (D^* (\widetilde{V})^\Gamma)$$ are well defined.
\end{proposition}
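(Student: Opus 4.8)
The plan is to show that $\rho(f)$, and hence also $\rho_\Gamma(f) = \tilde u_* \rho(f)$, depends only on the $h$-cobordism class of $(M \xrightarrow{f} V)$. Two independent sources of choices must be addressed: (a) for a fixed cycle $(M \xrightarrow{f} V)$ the class $\rho(D+C_f)$ a priori depends on the Riemannian metrics on $M$ and $V$ and on the various auxiliary choices entering Wahl's perturbation $C_f$; and (b) the class must be unchanged when $(M_1 \xrightarrow{f_1} V)$ is replaced by an $h$-cobordant cycle $(M_2 \xrightarrow{f_2} V)$. Part (a) is exactly the content of the \emph{proof} attached to Definition \ref{example:rho-he}: Wahl shows (compare \cite[Section 4]{Wahl_higher_rho}) that any two admissible sets of choices are joined by a path $D(t) + B(t)$ of invertible self-adjoint operators, and then the last statement of Proposition \ref{prop:stab-index-class} (more precisely, its easy extension to families on a closed manifold) gives that $\rho(D+C_f) \in K_{n+1}(D^*(\widetilde Z)^\Gamma)$, and therefore $\rho(f) = \tilde\phi_* \rho(D+C_f)$, is independent of these choices. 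So for Proposition \ref{prop:well-rho} the essential new point is (b), the $h$-cobordism invariance.

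For (b), I would argue by bordism using Corollary \ref{corol:part_index_special}. Suppose $X$ is an $h$-cobordism between $M_1$ and $M_2$ with $F\colon X \to V \times [0,1]$ restricting to $f_1$ and $f_2$. Form $\widetilde W := \widetilde X \disjointunion (-\widetilde V \times [0,1])$, a manifold with free cocompact $\Gamma$-action whose boundary is $\widetilde Z_1 \disjointunion (-\widetilde Z_2)$, where $Z_j = M_j \disjointunion (-V)$; equip $W$ with the signature operator $D$ (product near the boundary) so that the boundary operator is $D_{Z_1} \disjointunion (-D_{Z_2})$. The map $F$ together with $\mathrm{id}_{V\times[0,1]}$ is itself an (orientation preserving, $\Gamma$-equivariant) homotopy equivalence $W \to V\times[0,1]\disjointunion (-V\times[0,1])$; applying Wahl's construction to the boundary homotopy equivalence $f = f_1 \disjointunion f_2$ yields a global trivializing perturbation $B_\infty$ on $\widetilde W_\infty$ with $D_\infty + B_\infty$ \emph{invertible} (this is the manifold-with-boundary extension of Hilsum-Skandalis, \cite[Theorem 8.4]{Wahl_higher_rho}), and such that $B_\infty$ satisfies \eqref{def-of-B} relative to $C^\disjointunion_\infty := C_{f_1,\infty}\disjointunion(-C_{f_2,\infty})$. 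Invertibility of $D_\infty + B_\infty$ forces $\coarseind_\Gamma(D,B) = 0$ in $K_{n+1}(C^*_\Gamma)$, so Corollary \ref{corol:part_index_special} gives $\rho_\Gamma(D_{Z_1} + C_{f_1}) = \rho_\Gamma(D_{Z_2} + C_{f_2})$, i.e.\ $\rho_\Gamma(f_1) = \rho_\Gamma(f_2)$ in $K_{n+1}(D^*_\Gamma)$.

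To get the statement at the level of $K_{n+1}(D^*(\widetilde V)^\Gamma)$, rather than only after pushing forward to $D^*_\Gamma$, I would run the same argument but keep track of the comparison map to $\widetilde V$ instead of to $E\Gamma$: the maps $\phi_j = f_j \disjointunion (-\mathrm{id}_V)\colon Z_j \to V$ are the boundary restrictions of $\Phi := F \disjointunion (-\mathrm{id}_{V\times[0,1]})\colon W \to V\times[0,1]$, which is covered by a $\Gamma$-equivariant map $\widetilde\Phi\colon \widetilde W \to \widetilde V\times[0,1]$; composing with the projection $\widetilde V\times[0,1]\to\widetilde V$ and using functoriality of the coarse relative index class under such proper $\Gamma$-maps (together with the fact that $\widetilde\Phi$ restricted to each boundary component is the $\widetilde\phi_j$ of Definition \ref{def:rho}), the delocalized APS theorem \ref{theo:k-theory-deloc} — applied on $\widetilde W$ and then transported along $\widetilde\Phi$ — yields $\widetilde\phi_{1*}\rho(D_{Z_1}+C_{f_1}) = \widetilde\phi_{2*}\rho(D_{Z_2}+C_{f_2})$ in $K_{n+1}(D^*(\widetilde V)^\Gamma)$, which is precisely $\rho(f_1) = \rho(f_2)$; applying $\tilde u_*$ recovers the $\rho_\Gamma$ statement.

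The main obstacle I anticipate is the bookkeeping in the last paragraph: one must check that the coarse relative index class $\Ind^{\mathrm{rel}}(D_\infty + B_\infty)$ is natural under the (non-invertible, merely proper) map $\widetilde\Phi\colon \widetilde W_\infty \to \widetilde V_\infty \times[0,1] \simeq \widetilde V_\infty$ covering $F$, and that this naturality is compatible on the boundary with the identification $\widetilde\phi_j$ used in Definition \ref{def:rho}; this requires that $\widetilde\Phi$ be a continuous coarse map inducing the appropriate homomorphisms $C^*(\widetilde W)^\Gamma \to C^*(\widetilde V)^\Gamma$ and $D^*(\widetilde W_\infty)^\Gamma \to D^*(\widetilde V_\infty)^\Gamma$ (after the usual stabilization), which is standard coarse-index-theory input but must be invoked carefully — this is exactly the kind of functoriality already used implicitly in Definition \ref{def:rho} and in \cite{PS-Stolz}. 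A secondary, more routine point is verifying that Wahl's perturbation $B_\infty$ on the $h$-cobordism genuinely satisfies the compatibility condition \eqref{def-of-B} with $C^\disjointunion_\infty$ and that $D_\infty + B_\infty$ is invertible; both follow from \cite[Theorem 8.4]{Wahl_higher_rho} and the product structure of $D$ near $\partial W$, but the invertibility at infinity (as opposed to mere Fredholmness) is what makes $\coarseind_\Gamma(D,B)$ vanish and hence is essential to the argument.
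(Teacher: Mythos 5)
Your proposal is correct and follows essentially the same route as the paper: independence of choices via the stability result (Proposition \ref{prop:stab-index-class}), and $h$-cobordism invariance via the bordism $W = X \disjointunion (V\times[0,1])$ with boundary $Z_1 \disjointunion (-Z_2)$, the delocalized APS index theorem, Wahl's Theorem 8.4 to kill the index class, and push-forward along $\Phi$. The only cosmetic difference is that the paper first deduces $0=(j_1)_*\rho(D_{\widetilde Z_1}+C_{f_1})-(j_2)_*\rho(D_{\widetilde Z_2}+C_{f_2})$ inside $K_0(D^*(\widetilde W)^\Gamma)$ and only then applies $\Phi_*$, so the naturality issue you flag reduces to ordinary functoriality of $D^*$-K-theory rather than naturality of the relative index class.
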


\begin{proof}
This follows from the work \cite{Wahl_higher_rho} of Charlotte Wahl  and
 our delocalized APS index theorem.
 Let us first prove that
$\rho (f)\in K_0 (D^* (\widetilde{V})^\Gamma)$ is well 
defined.
We assume as before that  $[M_1 \xrightarrow{f_1} V]=[M_2 \xrightarrow{f_2} V]$
in $\mathcal{S}^h (V)$ and let
$F:X\to V\times [0,1]$ be the h-cobordism realizing this
equality;
we set again $W=X\disjointunion V\times [0,1]$.
Then there is a map $\Phi:W\to V$
given by $F\disjointunion \id_{V\times [0,1]}\colon W\to V\times [0,1]$, followed by the
projection $\pi_1$ onto the the first factor of $V\times [0,1]$; i.e.~$\Phi=
\pi_1 \circ (F\disjointunion \id_{V\times [0,1]})$. 
Let $\phi_1= f_1\disjointunion (-\Id_V): M_1 \disjointunion (-V)\to V$ and
similarly for $\phi_2$.
Set as before $Z_1:= M_1 \cup (-V)$ and $Z_2:=M_2 \cup (-V)$. We have observed that 
these manifolds come with classifying maps
$u\circ \phi_j$
into $B\Gamma$ and  we have denote by $\widetilde{Z}_j$ the corresponding $\Gamma$-covers.
As before, we consider the $\Gamma$-equivariant lifts $\tilde{\phi}_j\colon
\widetilde{Z}_j\to \widetilde{V}$.
Recall now that $\rho (f_1)\in K_0 (D^* (\widetilde{V})^\Gamma)$ is obtained 
by pushing forward through $(\tilde{\phi}_1)_* $ the rho-class of $\widetilde{Z}_1$,
denoted  $\rho (D_{\widetilde{Z}_1} + C_{f_1})\in  K_0 (D^* (\widetilde{Z}_1)^\Gamma)$. 
 Similarly $\rho (f_2)= (\tilde{\phi}_2)_* \rho (D_{\widetilde{Z}_2} + C_{f_2})$.
Clearly $\Phi$ restricted to $M_1 \disjointunion (-V)=:Z_1$ is $\phi_1$ and
similarly
for $\Phi$ restricted to $M_1 \disjointunion (-V)=:Z_2$. We can now apply
$\Phi_*$
to the delocalized APS index formula for $W$, which we write with self-explanatory
notation as
\begin{equation}\label{well-def-rho}
(\iota_{\widetilde{W}})_*  \Ind (D_{\widetilde{W}}, C_{1,2})= (j_1)_* \rho
(D_{\widetilde{Z}_1} + C_{f_1}) - (j_2)_* 
\rho ( D_{\widetilde{Z}_2} + C_{f_2} )\,,\end{equation}
with $j_1$ and $j_2$ the obvious inclusions. This equality takes place in 
$K_0 (D^* (\widetilde{W})^\Gamma)$.
{Now, 
Wahl proves in \cite[Theorem
8.4]{Wahl_higher_rho}, extending the work 
of Hilsum-Skandalis to manifolds with cylindrical ends, that the existence
of the map $F$ entering into the h-cobordism, i.e.~of a global homotopy equivalence, 
implies the vanishing of the index class 
 obtained by grafting the two smoothing perturbations $C_{f_j}$ on the cylindrical ends (Wahl works in the Mishchenko-Fomenko framework
but since we know that this is equivalent to the coarse framework we can
directly state her results in the way we just have).
Since the latter is precisely the index class that was denoted $\Ind (D_{\widetilde{W}}, C_{1,2})$
in the left hand side of \eqref{well-def-rho},}
 we obtain at once that $$0=(j_1)_* \rho (D_{\widetilde{Z}_1} + C_{f_1}) -  
(j_2)_* 
\rho ( D_{\widetilde{Z}_2} + C_{f_2} )\;\;\text{in}\;\;K_0 (D^* (\widetilde{W})^\Gamma).$$
We now apply $\Phi_*$ to both sides and we obtain the following equality in
$K_0 (D^* (\widetilde{V})^\Gamma)$:
$$0= \Phi_* ((j_1)_* \rho (Z_1)) - \Phi_* ((j_2)_* \rho (Z_2))$$
However, by functoriality, the right hand side is exactly
$\rho(f_1)-\rho(f_2)$. Thus $0=\rho(f_1)-\rho(f_2)$ and we are done.

This argument also  proves that $\rho_\Gamma$
 is well defined; indeed it suffices to apply $\tilde{u}_* : K_0 (D^* (\widetilde{V})^\Gamma)
 \to K_0 (D^*_\Gamma)$ to the equality $0=\rho(f_1)-\rho(f_2)$ and recall that
 $\tilde{u}_* (\rho(f_j))=\rho_\Gamma (f_j)$. See the remark after \eqref{def-of-rho-Gamma(f)}.
 \end{proof}

\subsection{The set $\protect\mathcal{N} (V)$ and the map $\beta_\Gamma : \protect\mathcal{N} (V) \to K_n (B\protect \Gamma)$}\label{sub:beta}


Let $V$ be an $n$-dimensional 
  smooth closed oriented manifold with fundamental group $\Gamma$. We assume
 that $n>4$ and that it is odd.
Let $u\colon V\to B\Gamma$ be
the classifying map for the universal covering of $V$ and let $\widetilde{V}:= u^* E\Gamma$.

\begin{definition}
 $\mathcal{N}(V)$ is the set of normal maps. Its
  elements are pairs $[M\xrightarrow{f} V]$ where $M$ is again a closed smooth
  oriented manifold and  $f$ is a degree $1$ normal map.
Two such pairs are equivalent if there is a normal cobordism between them
  (for all this compare for example \cite[Section 5]{higson-roeIII} and the references 
  therein). 
  \end{definition}
  
  The map $\beta_\Gamma$ is the composition of 
  $\beta\colon \mathcal{N}(V)\to K_*(V)$ and the group homomorphism $u_* \colon K_*(V)\to K_* (B\Gamma)$ induced by the classifying map of $V$.
Here  $\beta$  maps 
  $[M\xrightarrow{f} V]$ in $\mathcal{N}(V)$ to 
  $f_*[D_M]-[D_V] \in K_*(V)$,
  where $[D_M]$, $[D_V]$ are the K-homology classes of the signature
  operators. 
  The map $\beta\colon \mathcal{N}(V)\to  K_*(V)$ already appears in the work of Higson and Roe where it is proved to
  be well defined.
  
 \subsection{Mapping the surgery  sequence to the Higson-Roe
   sequence}
 
 \begin{theorem}\label{theo:main}
 Let $V$ be an $n$-dimensional 
  smooth closed oriented manifold with fundamental group $\Gamma$. We assume that
  $n>4$ is odd. Then there  is a  commutative diagram with exact rows
  
   \begin{equation}\label{PS}
  \begin{CD}
   L_{n+1}(\ZZ\Gamma) &\dashrightarrow& \mathcal{S}(V) @>>> \mathcal{N}(V) @>>> L_{n}(\ZZ\Gamma)\\
   @VV{\Ind_\Gamma}V  @VV{\rho}V @VV{\beta}V 
   @VV{\Ind_\Gamma}V \\
   K_{n+1} ( C^*_r\Gamma) @>>>  K_{n+1}(D^* (\widetilde{V})^\Gamma )
   @>>>K_{n} (V) @>>>  K_{n} ( C^*_r\Gamma)\\
    \end{CD}
    \end{equation}
By employing the classifying map $u\colon V\to B\Gamma$ for the universal cover $\tilde{V}$
of $V$ we also get  a commutative diagram mapping into  the
{\em universal} Higson-Roe  surgery sequence:
 \begin{equation}\label{PS-universal}
  \begin{CD}
 L_{n+1}(\ZZ\Gamma) &\dashrightarrow& \mathcal{S}(V) @>>> \mathcal{N}(V) @>>> L_{n}(\ZZ\Gamma)\\
   @VV{\Ind_\Gamma}V  @VV{\rho_\Gamma}V @VV{\beta_\Gamma}V 
   @VV{\Ind_\Gamma}V \\
    K_{n+1} ( C^*_r\Gamma)  @>{i_*}>>  K_{n+1}(D^*_\Gamma ) 
   @>>>K_{n} (B\Gamma) @>>>  K_{n} ( C^*_r\Gamma) \\
    \end{CD}
    \end{equation}

 \end{theorem}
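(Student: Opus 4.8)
The plan is to prove Theorem~\ref{theo:main} by establishing, square by square, the commutativity of the diagram \eqref{PS} (the diagram \eqref{PS-universal} then follows immediately by applying the functorial push-forward $\tilde u_*\colon K_*(D^*(\widetilde V)^\Gamma)\to K_*(D^*_\Gamma)$, $u_*\colon K_*(V)\to K_*(B\Gamma)$, and using the identifications $\rho_\Gamma(f)=\tilde u_*\rho(f)$ and $\beta_\Gamma = u_*\circ\beta$ recorded above). Exactness of the bottom row is the Higson--Roe six-term sequence \eqref{HR} together with the identification $K_*(C^*(\widetilde V)^\Gamma)\cong K_*(C^*_r\Gamma)$ and $K_{*+1}(D^*/C^*)\cong K_*(V)$, while exactness of the top row is the Browder--Novikov--Sullivan--Wall surgery exact sequence \eqref{wall-sequence}; these are quoted, not proved. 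So the whole content is the commutativity of the four squares, and the maps $\Ind_\Gamma$, $\rho$, $\beta$ are already defined (Sections~\ref{sub:L-map}, \ref{sub:coarse-index}, \ref{sub:beta}), with $\Ind_\Gamma$ well-defined by Theorem~\ref{theo:charlotte} (Wahl) and $\rho$ well-defined by Proposition~\ref{prop:well-rho}.

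First I would treat the middle square, $\partial\circ\rho = \beta\circ(\mathcal S(V)\to\mathcal N(V))$. The point is that for a homotopy equivalence $f\colon M\to V$, forgetting the structure gives the normal map $f$, and $\beta[f] = f_*[D_M]-[D_V]\in K_n(V)$; under $K_n(V)\cong K_{n+1}(D^*(\widetilde V)^\Gamma/C^*(\widetilde V)^\Gamma)$ this is exactly the symbol class of the signature operator $D$ on $\widetilde Z = u_Z^*E\Gamma$ with $Z=M\sqcup(-V)$, pushed forward by $\tilde\phi$. On the other hand $\rho(f)=\tilde\phi_*\rho(D+C_f)$ is built from the invertible perturbed operator $D+C_f$ in $D^*(\widetilde Z)^\Gamma$, and the boundary map $\partial$ in the exact sequence sends the $\rho$-class back to the K-homology symbol class, because $C_f\in C^*(\widetilde Z)^\Gamma$ is a perturbation by an element of the ideal and thus does not change the class modulo $C^*$. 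So commutativity here is essentially the compatibility of the $\rho$-construction with the connecting map, combined with functoriality of $\tilde\phi_*$; this is the easiest square.

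Next I would do the square $\mathcal N(V)\to L_n(\ZZ\Gamma)$ versus $\beta\colon K_n(V)\to K_n(C^*_r\Gamma)$: since $K_n(V)\to K_n(C^*_r\Gamma)$ is the assembly map (by \cite{roe-bc_coarse}), and $\Ind_\Gamma\colon L_n(\ZZ\Gamma)\to K_n(C^*_r\Gamma)$ is the higher signature index of the perturbed operator associated to the bordism cycle, commutativity reduces to an index theorem identifying the assembled symbol class $\mu(f_*[D_M]-[D_V])$ with the APS-type index $\Ind_\Gamma$ of a cycle whose boundary carries the Hilsum--Skandalis--Wahl perturbation for the homotopy equivalence of boundaries induced by a normal cobordism. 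The two squares involving $\Ind_\Gamma$ out of $L$-theory --- the leftmost square $L_{n+1}(\ZZ\Gamma)\dashrightarrow\mathcal S(V)$ versus $\Ind_\Gamma$ vs.\ $\rho$, and this rightmost one --- are exactly where the delocalized APS index theorem (Theorem~\ref{theo:k-theory-deloc}) enters: for a cycle $x\in L_{n+1}(\ZZ\Gamma)$ with boundary data $(W,F,X\times[0,1],u)$, one has on one side $\Ind_\Gamma(x)=\coarseind_\Gamma(D,C)$ and on the other $\rho$ of the homotopy equivalence obtained by the geometric action of $L_{n+1}$ on $\mathcal S(V)$; these are linked by \eqref{main-index-eq}, which says precisely $\iota_*\coarseind_\Gamma(D,C) = \rho_\Gamma(D_\partial + C_\partial)$. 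So the leftmost square commutes after pushing into $D^*_\Gamma$ by Corollary~\ref{corollary-main}, and one must then check it already commutes at the level of $K_{n+1}(D^*(\widetilde V)^\Gamma)$ before applying $\tilde u_*$, which is handled as in \cite{PS-Stolz} by working directly on $\widetilde W$.

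The main obstacle will be the square $L_{n+1}(\ZZ\Gamma)\dashrightarrow\mathcal S(V)$ with $\Ind_\Gamma$ and $\rho$, together with the bookkeeping needed to realize the geometric $L$-theory action on $\mathcal S(V)$ by gluing a surgery cobordism to a structure --- one must match the perturbation $C_f$ used to define $\rho$ of the new structure with the boundary perturbation $C_\partial$ used to define $\Ind_\Gamma$ of the $L$-class, and this is exactly where Wahl's version of the Hilsum--Skandalis perturbation and its behaviour under gluing along homotopy equivalences of boundaries (Theorem~8.4 of \cite{Wahl_higher_rho}) is essential, as is the fact that the coarse index class is insensitive, up to the relative class, to changing a trivializing perturbation within a homotopy (Proposition~\ref{prop:stab-index-class}). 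I expect the proof to proceed by: (i) recording exactness of both rows as quoted; (ii) commutativity of the $\beta/\partial$ square by the symbol-class computation; (iii) commutativity of the two $\Ind_\Gamma$-squares by applying Theorem~\ref{theo:k-theory-deloc} to the relevant cobordism with cylindrical ends, using Proposition~\ref{b=coarse} to pass between the Mishchenko--Fomenko and coarse pictures; and (iv) in each case checking the identity at the level of $D^*(\widetilde V)^\Gamma$ (not just $D^*_\Gamma$) via the explicit $\tilde\phi_*$, $\Phi_*$ push-forwards as in the proof of Proposition~\ref{prop:well-rho}. The even-dimensional case is then deferred, as announced, with the factor-of-$2$ subtleties flagged in Remark~\ref{remark:odd-versus-even}.
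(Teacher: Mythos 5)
Your scaffolding matches the paper's: exactness of both rows is quoted, the maps are already defined and shown well defined (Theorem \ref{theo:charlotte}, Proposition \ref{prop:well-rho}), the universal diagram follows by applying $\tilde u_*$, and your treatment of the middle square --- the observation that $C_f$ lies in the ideal $C^*(\widetilde Z)^\Gamma$ so that the homotopy $t\mapsto\chi(D_{\widetilde Z}+tC_f)$ identifies the class of the perturbed involution with the K-homology signature class in $D^*/C^*$, followed by $\tilde\phi_*$ --- is exactly the paper's argument. However, for the first square your plan has a genuine gap. You correctly flag that one must match the perturbation defining $\rho$ of the new structure with the boundary perturbation defining $\Ind_\Gamma(a)$, but the result you invoke (Wahl's Theorem 8.4, the vanishing of the index class in the presence of a global homotopy equivalence) is the ingredient for \emph{well-definedness} of $\rho$, not for this square. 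The action of $a\in L_{n+1}(\ZZ\Gamma)$ on $[M\xrightarrow{f}V]$ produces the \emph{composite} homotopy equivalence $f\circ\pa_1F$, whereas the boundary data of the $L$-theory cycle involve $\pa_0F$ and $\pa_1F$ separately; the delocalized APS theorem only gives
\begin{equation*}
\tilde f_*\bigl(\rho[\pa_1W\xrightarrow{\pa_1F}M]\bigr)-\tilde f_*\bigl(\rho[\pa_0W\xrightarrow{\pa_0F}M]\bigr)=i_*\Ind(a).
\end{equation*}
To convert $\rho(a\cdot[f])-\rho[f]$ into this difference one needs the composition formula \eqref{rho-composition},
\begin{equation*}
\rho[L\xrightarrow{f\circ g}V]+\tilde f_*\rho[M\xrightarrow{\Id_M}M]=\tilde f_*\rho[L\xrightarrow{g}M]+\rho[M\xrightarrow{f}V],
\end{equation*}
which is proved in the paper by a separate argument on the four-component manifold $Y=L\sqcup(-M)\sqcup M\sqcup(-V)$, interpolating between the two perturbations via the continuous family from the proof of Wahl's Proposition 7.1 and invoking Proposition \ref{prop:stab-index-class}. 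This lemma is the most delicate step of the proof and is missing from your outline.

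Your route for the third square is also misdirected: the delocalized APS index theorem plays no role there. The image of $[M\xrightarrow{f}V]\in\mathcal N(V)$ in $L_n(\ZZ\Gamma)$ is the surgery obstruction, represented by the \emph{closed} odd-dimensional manifold $Z=M\sqcup(-V)$ with its classifying map, so $\Ind_\Gamma$ of it is an ordinary (boundaryless, unperturbed) index class. The paper identifies this with the Mishchenko symmetric signature of $u_Z^*E\Gamma\to Z$ via \cite{conspectus} and then reduces the commutativity to \cite[Theorem 5.4]{higson-roeIII}; no APS-type index, boundary perturbation, or cylindrical end enters.
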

 
 \begin{proof}
 Thanks to the work of Wahl (see Theorem \ref{theo:charlotte}) and Proposition \ref{prop:stab-index-class} we know that the index
 homomorphism $\Ind\colon L_* (\ZZ\Gamma)\to K_* (C^*_r \Gamma)$ is well
 defined\footnote{Wahl concentrates on the case $*$ even, but for this particular result her
 arguments apply, with a minimum  amount of work, to
  the case $*$ odd.}. By Proposition \ref{prop:well-rho}, $\rho$  is well defined
 and we know from \cite[Definition 5.2]{higson-roeIII} that $\beta$ is also
 well defined. We now proceed to
 prove the commutativity of the squares. 
 
 Recall first of all
   the meaning of exactness of the surgery sequence at $\mathcal{S}(V)$, see
  for example  \cite{higson-roeIII}. Let
  $[M\xrightarrow{f}V]\in 
\mathcal{S}(V)$ and let  $a\in L_{n+1} (\ZZ\Gamma)$.
  Then 
 $a$ can be  represented by $(W, F,M\times [0,1], u_M\colon M\to B\Gamma)$ with $F\colon W\to M\times [0,1]$ a normal map of degree one between manifolds with boundary and $u_M= u\circ f$.
 Here $W$ is a manifold with boundary with two boundary components
 $\pa_0 W$ and $\pa_1 W$, $\pa_0 F:= F |_{\pa_0 W} \colon \pa_0 W\to \{0\}\times M$ 
 is a diffeomorphism and $\pa_1 F:= F |_{\pa_1 W} \colon \pa_1 W\to \{1\}\times M$ 
is a homotopy equivalence. For later use we set $X:=
W\sqcup M\times [0,1]$ and we remark that  $X$ comes with a  natural map  $u_f: X\to V$
with $u_f:= f\circ (\pi_1( F\circ \id_{M\times [0,1]}))$ and $\pi_1$ the projection onto the first factor. 
The action of $ L_{n+1} (\ZZ\Gamma)$
on $\mathcal{S}(V)$ is as follows: if $a\in L_{n+1} (\ZZ\Gamma)$ and $[M\xrightarrow{f}V]\in 
\mathcal{S}(V)$ then 
$$a\cdot  [M\xrightarrow{f}V] := [\pa_1 W\xrightarrow{f\circ \pa_1 F} V]\,.$$  
Exactness of the surgery sequence at $\mathcal{S}(V)$ means the following:
two elements in the structure set belong to the same orbit under the action of
the group $L_{n+1} (\ZZ\Gamma)$ if and only if their images in 
$\mathcal{N}(V)$ coincide. By definition, in this situation where we deal with
group actions instead of homomorphisms, commutativity of the first square means
  $$\rho (a \cdot [M\xrightarrow{f}V])- \rho [M\xrightarrow{f}V]=
  i_* \Ind (a)\quad\text{in}\quad K_{n+1}(D^* (\widetilde{V})^\Gamma )$$
  i.e. that 
  $$\rho  [\pa_1 W\xrightarrow{f\circ \pa_1 F} V] - 
  \rho [M\xrightarrow{f}V]=  i_* \Ind (a)\,.$$
  The homomorphism $i_*$ appearing on the right hand side is, up to a canonical isomorphism, 
  the map induced by the inclusion $C^* (\widetilde{V})^\Gamma\to D^* (\widetilde{V})^\Gamma$
  whereas 
  the index class $\Ind (a)$ can be taken to be  the push-forward, from 
  $K_{n+1} (C^* (\widetilde{X})^\Gamma)$ to $K_{n+1} (C^* (\widetilde{V})^\Gamma)$,
  of the index class defined by the perturbed signature operator on
  $\widetilde{X}$:
  \begin{equation}\label{ind-a}
  \Ind (a)= (\tilde{u}_f)_* (\Ind (D_{\widetilde{X}} + C_{ \pa_0 F, \pa_1 F}))
  \end{equation}
  with $\tilde{u}_f$ the $\Gamma$-map covering $u_f\colon X\to V$.
    Now, the proof of  \cite[Proposition 7.1]{Wahl_higher_rho}, together with 
  Proposition \ref{prop:stab-index-class},   
 shows that we have the following identity in $K_{*+1}
 (D^*(\widetilde{V})^\Gamma )$: 
  \begin{equation}\label{rho-composition}
  \rho [L\xrightarrow{f\circ g} V] + \tilde{f}_* (\rho [M\xrightarrow{\Id_M} M])=
  \tilde{f}_* (\rho [L\xrightarrow{g} M]) + \rho [M\xrightarrow{f} V]\,.
  \end{equation}
 In order to prove Equation \eqref{rho-composition} we consider the manifold
 $Y:=L\sqcup (-M) \sqcup M \sqcup (-V) $ and the natural inclusions
  $$ Z_1:= L\sqcup (-M) \xrightarrow{j_1} Y,\quad \quad Z_2:= M\sqcup (-V) \xrightarrow{j_2} Y,\quad T_1:= L\sqcup (-M) \xrightarrow{i_1} Y\quad, \text{and}\quad T_2:= M\sqcup (-V) \xrightarrow{i_2} Y.$$
One gets the invertible operators
$D_{\widetilde{Z}_1} + C_g$ and $ D_{\widetilde{Z}_2} + C_f$
and the associated rho classes
$$\rho (D_{\widetilde{Z}_1} + C_g)\in K_{n+1} (D^* (\widetilde{Z}_1)^\Gamma)
\;;\quad \rho (D_{\widetilde{Z}_2} + C_f)\in K_{n+1} (D^* (\widetilde{Z}_2)^\Gamma)\;.$$
Similarly, we obtain the invertible operators
$D_{\widetilde{T}_1} + C_{f\circ g}$ and $D_{\widetilde{T}_2} + C_{\Id}$
and the rho classes
$$\rho (D_{\widetilde{T}_1} + C_{f\circ g})\in K_{n+1} (D^* (\widetilde{T}_1)^\Gamma)
\;;\quad \rho (D_{\widetilde{T}_2} + C_{\id})\in K_{n+1} (D^*
(\widetilde{T}_2)^\Gamma)\;. $$
We claim that
\begin{equation}\label{compo-pre}
(\tilde{j}_1)_* (\rho (D_{\widetilde{Z}_1} + C_g))+ (\tilde{j}_2)_* (\rho (D_{\widetilde{Z}_2} + C_f))
=
(\tilde{i}_1)_* (\rho (D_{\widetilde{T}_1} + C_{f\circ g})) + 
(\tilde{i}_2)_* ( \rho (D_{\widetilde{T}_2} + C_{\Id}))
\end{equation}
in the group  $K_{n+1} (D^* (\widetilde{Y})^\Gamma)$.
Indeed, by a simple functoriality argument,
both sides are defined by rho classes of suitable perturbations
of the signature operator on $\widetilde{Y}$; Wahl shows in the proof of  \cite[Proposition 7.1]{Wahl_higher_rho} 
that there is a continuous family of trivializing perturbations,
 $[0,1]\ni t \to G_t$, interpolating from the perturbation defining the left hand side
 to the perturbation defining the right hand side. Thanks to   
 Proposition \ref{prop:stab-index-class} we then obtain immediately \eqref{compo-pre}.
 We now consider the map $\Psi:Y\to V$ defined as 
 $$\Psi |_{L}:= f\circ g\,,\;\; \Psi |_{M}:= f\,,\;\; \Psi |_{-V}:= \Id_{-V}$$
 and we observe that
 $$\Psi\circ i_1=(f\circ g)\sqcup \Id_{(-V)}\,,\;\;\Psi\circ i_2=f\sqcup (-f)\,,\;\;\Psi\circ j_1= f\circ (g\sqcup \Id_{(-M)})\,.$$
 By applying  $\tilde{\Psi}_*$ to both sides of \eqref{compo-pre}
 we then get, by functoriality, that
 $$\tilde{f}_* (\rho (g)) + \rho (f) =
  \rho (f\circ g) + \tilde{f}_* (\rho (\Id_M))\;\;\text{in}\;\;K_{n+1} (D^* (\widetilde{V})^\Gamma)$$
  which is precisely \eqref{rho-composition}.
  Granted  formula \eqref{rho-composition} we obtain, as in 
  \cite[Theorem 9.1]{Wahl_higher_rho}:
   \begin{equation}\label{rho-composition-quater}
  \rho (a \cdot [M\xrightarrow{f}V])- \rho [M\xrightarrow{f}V]=
  \tilde{f}_* (\rho [{\pa_1 W}\xrightarrow{\pa_1 F} M])- \tilde{f}_* (\rho
  [\pa_0 W\xrightarrow{\pa_0 F} M]).
  \end{equation}
  By applying the delocalized APS index theorem \ref{theo:k-theory-deloc} for
  perturbed operators and functoriality we
  recognize in the difference on the right hand side the element $ i_* \Ind (a)$,
  with $\Ind (a)$ as in \eqref{ind-a}.
  The commutativity of the first square is therefore established.
  
  Next we tackle the commutativity of the second square, where we recall that 
  the horizontal map $\mathcal{S}( V)\to \mathcal{N} (V)$ is the forgetful map. 
  Let $[M\xrightarrow{f}V]\in \mathcal{S}(V)$ and consider $Z=M\sqcup (-V)$
   with the usual maps $\phi\colon Z\to V$, $\phi:= f\sqcup \Id_{(-V)}$, and $u_Z\colon Z\to B\Gamma$, $u_Z:=u\circ \phi$. We let $C_f$ be
   the trivializing perturbation defined by $f$, so that $D_{\widetilde{Z}}+C_f$
   is invertible, with spectrum disjoint from the interval
   $[-2\epsilon,2\epsilon]$.
   Recall that for any closed compact manifold $Z$ of odd dimension,
 endowed with a classifying map $u_Z \colon Z\to B\Gamma$,
  $$K_1 (Z)=K_{0}(D^*Z/C^*Z)= 
  K_{0}(D^* (\widetilde{Z})^\Gamma / C^* (\widetilde{Z})^\Gamma )$$
  with $\widetilde{Z}=u_Z^* E\Gamma$; in this realization of the K-homology groups, 
  the K-homology signature class $[D_Z]\in 
    K_1 (Z)$ 
  is obtained by considering the projection
  $[1/2 (\chi (D_{\widetilde{Z}})+1)]$ in 
    $K_{0}(D^* (\widetilde{Z})^\Gamma / C^* (\widetilde{Z})^\Gamma )$ with $\chi$ a chopping function.
   The following continuous path of operators
$$[0,1]\ni t \rightarrow [\frac{1}{2} ( \chi (D_{\widetilde{Z}}+tC_f) + 1)]\in K_{0}(D^* (\widetilde{Z})^\Gamma / C^* (\widetilde{Z})^\Gamma )$$
shows that for a chopping function $\chi$ equal to $1$ on $[\epsilon,+\infty)$
and $-1$ on $(-\infty,\epsilon]$ we have the equality
\begin{equation}\label{suffices0}
[\frac{1}{2} (\frac{D_{\widetilde{Z}}+C_f}{
|D_{\widetilde{Z}}+C_f |} + 1)]= [1/2 (\chi (D_{\widetilde{Z}})+1)]
\text{ in } K_{0}(D^* (\widetilde{Z})^\Gamma / 
C^* (\widetilde{Z})^\Gamma ) .
\end{equation}
With $\phi= f\sqcup \Id_{(-V)}$ we now apply  $(\tilde{\phi})_*$ to both sides
of this equality. On the left hand side
we obtain the class $\rho [M\xrightarrow{f}V] \in K_{0}(D^*
(\widetilde{V})^\Gamma / 
C^* (\widetilde{V})^\Gamma )=K_1 (V)$ whereas on the right we obtain $\phi_* [D_Z]\in K_1 (V)$ which
is precisely $f_* [D_M]-[D_V]$.
The commutativity of the second square is established.

Finally, we tackle the third square. The image of an element $[M\xrightarrow{f} V]\in \mathcal{N}(V)$
in $L_{n} (\ZZ\Gamma)$ is simply the class associated 
to the  cycle represented by the closed odd dimensional
 manifold $Z=M\cup (-V)$ endowed with 
the classifying map $u_Z$. This element in $L_{n} (\ZZ\Gamma)$ is the {\em surgery obstruction}
associated to $[M\xrightarrow{f} V]\in \mathcal{N}(V)$ and  
one of the fundamental   
results in surgery theory states that  a  normal map $f\colon M\to V$
can be surgered to a homotopy equivalence if and only if its surgery obstruction 
in $L_{n} (\ZZ\Gamma)$ vanishes. This is the meaning of exactness
of the surgery sequence at $\mathcal{N} (V)$.
Going back to the commutativity of the third square
we observe that the map $\Ind$ sends this class, the surgery obstruction
associated to $[M\xrightarrow{f} V]$,
to the associated index class in $K_{n} (C^*_r \Gamma)$. 
By \cite[Theorem 2]{conspectus}, this class is equal to the  
Mishchenko symmetric signature of the covering $u_Z^* E\Gamma \to Z\equiv
M\cup (-V)$. This remark reduces the proof of the  
commutativity of the third square to the proof given by Higson and Roe in
\cite[Theorem 5.4]{higson-roeIII}.
\end{proof}

 \begin{remark}
In \cite{Xie-Yu}, a slightly different approach to the
   delocalized  APS index theorem for spin Dirac operators is worked
   out. It uses the localization algebras invented by Yu in
   \cite{YuLocalization}, which allow to
   formulate and prove product  formulas for products between fundamental
   classes (primary invariants) and rho-classes (secondary invariants). 
 Presumably this approach  can be generalized to the perturbed
   operators we have to use here; however, it is clear that one will
   face technical
   difficulties related to the non-locality of the perturbed operators and the
   very definition of localization algebras (based on locality of the operators
   involved).

 \end{remark}

\begin{remark}
So far we have treated only the case in which the manifold $V$
appearing in the structure set $\mathcal{S}^h (V)$ is odd dimensional.
Similarly, in \cite{PS-Stolz} we have only treated the group $\Pos^{\spin}_n (X)$
for $n$ odd.  
The problem, both here and in \cite{PS-Stolz},
is centered around  the delocalized APS index theorem, a crucial tool which we
prove only for {\it even dimensional} manifolds with boundary.

However, there is a clear strategy to deduce the remaining case from the one we
have established, which is also suggested in \cite{Xie-Yu}. One should
develop a ``suspension homomorphism'', mapping in functorial and compatible
ways (for manifolds $W$ with and without boundary)
$K_*(C^*(W)^\Gamma)$ to $K_{*+1}(C^*(W\times \reals)^{\Gamma\times\integers})$, and
$K_*(D^*(W)^\Gamma)$ to $K_{*+1}(D^*(W\times \reals)^{\Gamma\times\integers})$.

These homomorphisms have to map the rho-classes (in the case of a manifold
without boundary) and index classes (in general) of the Dirac
operators on $W$ to those on $W\times \reals$, and they have to be injective.
It is then an easy exercise to deduce the delocalized APS  index theorem for one
parity from the one for the other parity.

This program has now been implemented by Vito Felice Zenobi in \cite{zenobi}, building 
on previous work of Paul Siegel, see \cite{Siegel-phd} and
\cite{Siegel}. Consequently, the delocalised
APS index theorem presented in this article is now established in every dimension.
Similarly, Zenobi shows that the delocalised APS index theorem proved in
\cite{PS-Stolz} is valid in all dimensions.
The latter is also a consequence of the treatment given by Xie and Yu in
\cite{Xie-Yu}.


Notice that a general treatment of the suspension homomorphism and the relevant
compatibilities and product formulas, using Yu's localization
algebras, is carried out by Rudolf Zeidler in \cite{Zeidler}, making more
explicit some of the constructions in \cite{Xie-Yu}. The methods in
\cite{Zeidler} also cover the case of the signature operator and its rho
invariants and are therefore another route to derive the delocalzed APS index
theorem for odd dimensional manifolds from the one for even dimensional manifolds.
\end{remark}

\section{Proof of the delocalized APS index theorem}\label{sec:proof}
 
 In this section we prove Theorem \ref{theo:k-theory-deloc}, the delocalized APS index theorem for perturbed operators.  Recall from \cite[Section 4]{PS-Stolz} that
  the corresponding result for the Dirac operator of a spin manifold with positive scalar
 curvature on the boundary is proved in two steps: first we show how to reduce
 the validity of the
 theorem on $\widetilde{W}_\infty$ to a version of the theorem on
 $\RR\times \pa \widetilde{W}$ (which we call the ``cylinder delocalized index
 theorem for perturbed operators'');
 next a detailed analysis, with explicit computations, is carried out on the
 cylinder in order to establish the cylinder delocalized index theorem. We will
 follow this strategy also here.
 
 In this subsection we make more precise
 the  notation given in the proof of Proposition \ref{prop:key}; thus we
 denote by $p$ the characteristic function of 
 $[0,\infty)\times \pa \widetilde{W}$ inside $W_\infty$ and we denote by $P$ the corresponding
 multiplication operator.  Finally, as in the proof of Proposition \ref{prop:key},
 we denote by $p_0$ the characteristic function of  
 $[0,\infty)\times \pa \widetilde{W}$ inside
 the full cylinder 
  $\RR\times \pa \widetilde{W}$ and by $P_0$ the corresponding multiplication operator.

  \subsection{Reduction to the cylinder}
Once we have at our disposal Proposition \ref{prop:key}, together with its
proof, it is elementary to check that
the reduction to the  cylinder proceeds exactly as in \cite{PS-Stolz}.
Thus
\begin{enumerate}
\item We establish that $ P_0 \,\chi (\overline{D} + \overline{C})_+ \, P_0$ is an involution in $D^* ([0,\infty)\times \pa \widetilde{W})^\Gamma/ {D^* ( \{0\}\times \pa\widetilde{W}}\subset
[0,\infty)\times \pa \widetilde{W})^\Gamma$, where we recall that on the cylinder the bundle on which the signature operator acts
can be identified with the direct sum of two copies of the same bundle, with the obvious grading. 
We obtain $\pa [P_0  \,
\chi (\overline{D}+ \overline{C})_+ \, P_0]
\in K_0 (D^* ( \pa \widetilde{W}\subset
[0,\infty)\times \pa \widetilde{W})^\Gamma )$.
\item Similarly, we have a class $\pa [P \chi (\overline{D} + \overline{C}) P]\in  
K_0 (D^* (\widetilde{W}\subset \widetilde{W}_\infty)^\Gamma)$,
where it is obvious how to consider 
$P \chi(\overline{D}+\overline{C})P $ {as an element of}
$D^*(\widetilde{W}_\infty)^\Gamma$. 
\item Next we establish that 
\begin{equation}\label{key-for-chopping}
\chi (D+C) - P \chi (\overline{D}+ \overline{C}) P\;\;\in\;\;
D^* (\widetilde{W}\subset \widetilde{W}_\infty)^\Gamma\,. 
\end{equation}
We shall prove \eqref{key-for-chopping} at the end of this subsection, just
below.
\item Using the latter information, we show that
if $\iota_*\colon K_* (C^* (\widetilde{W}\subset \widetilde{W}_\infty)^\Gamma)\to
K_* (D^* (\widetilde{W}\subset \widetilde{W}_\infty)^\Gamma) $ is induced by
the inclusion, then $\iota_* (\Ind^{{\rm rel}} (D + C))=  \pa [P \,\chi
(\overline{D}+ \overline{C})_+ \, P] \in K_0 (D^* (\widetilde{W}\subset
\widetilde{W}_\infty)^\Gamma)$.
\item  We then show that if $j_{+}$ is the homomorphism 
$ K_* (D^* (\pa\widetilde{W}\subset [0,\infty)\times \pa\widetilde{W})^\Gamma)\to
K_0 (D^* (\widetilde{W}\subset \widetilde{W}_\infty)^\Gamma)
$, then $j_+ ( \pa [P_0 \, \chi (\overline{D}+ \overline{C})_+ \, P_0])= \pa [P \,\chi (\overline{D}+ \overline{C})_+ \, P]$ 
in $ 
K_0 (D^* (\widetilde{W}\subset \widetilde{W}_\infty)^\Gamma)$.
\item Next we assume  the
  \begin{theorem}[Cylinder delocalized index theorem for perturbed operators]\label{theo:cyl} If
    $j_{\pa}$ is the isomorphism $K_* 
    (D^* (\pa\widetilde{W})^\Gamma) \to K_* (D^* (\pa\widetilde{W}\subset
    [0,\infty)\times \pa\widetilde{W})^\Gamma)$, then
    \begin{equation}\label{main-cyl}
\pa [P_0 \, \chi
    (\overline{D}+ \overline{C})_+ \, P_0]=j_{\pa} \rho (D_\pa + C_\pa) .
      \end{equation}
  \end{theorem}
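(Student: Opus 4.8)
The plan is to follow the strategy of \cite[Section~4]{PS-Stolz} for the cylinder, replacing the unperturbed boundary Dirac operator used there by the invertible operator $D_\pa+C_\pa$, and coping with the resulting loss of unit propagation speed. Concretely, I would reduce \eqref{main-cyl} to an explicit Wiener--Hopf (Toeplitz) computation on the full cylinder $\RR\times\pa\widetilde{W}$, and then identify the resulting boundary class with the positive spectral projection of $D_\pa+C_\pa$, i.e.\ with $\rho(D_\pa+C_\pa)$ as in \eqref{rho-odd}.

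\textbf{Step 1 (structure on the cylinder).} Using the product structure near $\pa\widetilde{W}$ and the fact that $\overline{C}$ is constant in the cylindrical variable $t$, I would first make precise the identification $\overline{D}+\overline{C}=\sigma(\partial_t+B)$, with $B:=D_\pa+C_\pa$ and $\sigma$ the Clifford symmetry in the $t$-direction, writing the bundle over $\RR\times\pa\widetilde{W}$ as two copies of the bundle over $\pa\widetilde{W}$ with the obvious $\ZZ/2$-grading (conventions of \cite{Wahl_higher_rho}). Since $B$ is $L^2$-invertible with a spectral gap, $(\overline{D}+\overline{C})^2=-\partial_t^2+B^2$ is bounded below by a positive constant, so $\overline{D}+\overline{C}$ is $L^2$-invertible on the cylinder with a spectral gap at $0$; hence a chopping function $\chi$ taking the values $\pm1$ on its spectrum gives $\chi(\overline{D}+\overline{C})=\sgn(\overline{D}+\overline{C})$, a $\Gamma$-equivariant self-adjoint unitary, odd for the grading, and $V:=\chi(\overline{D}+\overline{C})_+$ is a genuine unitary.

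\textbf{Step 2 (explicit sign and its half-line compression).} Next I would compute $V$ explicitly, either via $\sgn(x)=\tfrac{2}{\pi}\int_0^\infty x(x^2+s^2)^{-1}\,ds$ applied to $\overline{D}+\overline{C}$ using $(\overline{D}+\overline{C})^2=-\partial_t^2+B^2$, or by Fourier transforming in $t$, so that $\overline{D}+\overline{C}$ becomes the multiplication family $\xi\mapsto\sigma(i\xi+B)$ and $V$ becomes multiplication by a path of unitaries over $\xi\in\RR$ which equals $\sgn(B)$ at $\xi=0$ and becomes asymptotically constant (equal to $\pm i$, with the usual conventions) as $\xi\to\pm\infty$. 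The operator $P_0\,V\,P_0$ is then the associated Wiener--Hopf operator, and one must check that it is a unitary modulo $D^*(\pa\widetilde{W}\subset[0,\infty)\times\pa\widetilde{W})^\Gamma$; concretely, that $P_0V(1-P_0)V^*P_0$ and $P_0V^*(1-P_0)VP_0$ lie in that relative algebra, i.e.\ that ``leaving $[0,\infty)\times\pa\widetilde{W}$ and returning'' stays near $\{0\}\times\pa\widetilde{W}$. This produces the class $\pa[P_0\,V\,P_0]\in K_0(D^*(\pa\widetilde{W}\subset[0,\infty)\times\pa\widetilde{W})^\Gamma)$.

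\textbf{Step 3 (identification and the main obstacle).} Finally, the boundary map of this Wiener--Hopf extension computes $\pa[P_0\,V\,P_0]$ in terms of the $\xi$-family of Step~2; as in the classical Wiener--Hopf picture, and exactly as in the unperturbed cylinder computation of \cite{PS-Stolz}, one recognizes this as the $K_0$-class of the positive spectral projection $\chi_{(0,\infty)}(B)=\tfrac12(\sgn(B)+1)$ of $B=D_\pa+C_\pa$, transported via $j_\pa$; comparing with \eqref{rho-odd} this is exactly $j_\pa\,\rho(D_\pa+C_\pa)$, once one checks that the grading and orientation conventions match (the potential source of a sign, cf.\ Remark \ref{remark:odd-versus-even}). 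Consistency can also be seen from Proposition \ref{prop:stab-index-class}: both sides depend only on the path component of $D_\pa+C_\pa$ among invertible operators. The main obstacle, and the only genuine difference from \cite{PS-Stolz}, is that $\overline{C}$ is \emph{not} local, so the finite-propagation and wave-operator arguments used throughout the cylinder computation of \cite{PS-Stolz} (in particular to prove the support statements of Step~2 and the convergence needed for the boundary map) are unavailable. The remedy is that $\overline{C}$ is \emph{exactly translation invariant} in $t$, so the Fourier description of Step~2 holds verbatim, and all the controlled comparison estimates needed to place $P_0V(1-P_0)V^*P_0$ and related operators in the correct relative $C^*$-algebra can be derived, exactly as in Lemma \ref{lem:fDC_loc}, from a Neumann-series comparison of the perturbed resolvents $(\overline{D}+\overline{C}+\lambda i)^{-1}$ with the unperturbed ones $(\overline{D}+\lambda i)^{-1}$, for which finite propagation does hold. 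Verifying that this bookkeeping produces no spurious contribution from the cylindrical end is the delicate part.
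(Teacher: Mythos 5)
Your overall strategy is the right one and is close to what the paper does: both arguments are Wiener--Hopf computations on the cylinder, with the loss of finite propagation for $\overline{C}$ handled by comparing the perturbed operator with the unperturbed one through resolvents. The difference is in execution. The paper does not Fourier-transform the symbol of $\chi(\overline{D}+\overline{C})_+$; instead it first replaces $\chi(\overline{D}+\overline{C})_+$ by the rational expression $\frac{|D_\pa+C_\pa|+\pa_t}{(D_\pa+C_\pa)-\pa_t}$ via a deformation (equation \eqref{eq:bd}), and then exhibits an \emph{explicit} one-sided inverse of its half-line compression: with $Q:=P_0\frac{|D_\pa+C_\pa|-\pa_t}{(D_\pa+C_\pa)+\pa_t}P_0$ one has $Q\circ(P_0\frac{|D_\pa+C_\pa|+\pa_t}{(D_\pa+C_\pa)-\pa_t}P_0)=\Id-\Pi$ with $\Pi=V\chi_{[0,\infty)}(D_\pa+C_\pa)V^*$, where $V$ is the Poisson operator $(Vs)(t)=\sqrt{2|D_\pa+C_\pa|}\,e^{-t|D_\pa+C_\pa|}s$ (Proposition \ref{prop:Q}). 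Your Fourier picture would buy a cleaner symbolic description of the Toeplitz operator, but the defect projection is the same in either route, and identifying its class with $j_\pa\rho(D_\pa+C_\pa)$ forces you to produce exactly this $V$ and to prove that it covers the inclusion $\pa\widetilde{W}\hookrightarrow[0,\infty)\times\pa\widetilde{W}$ in the $D^*$-sense.

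That covering property, together with showing $\frac{|D_\pa+C_\pa|+\pa_t}{(D_\pa+C_\pa)-\pa_t}\in D^*(\RR\times\pa\widetilde{W})^\Gamma$, is where essentially all of the work in the paper's proof is concentrated (Propositions \ref{prop:v-covers} and \ref{prop:deformed-in-d}), and it is the one place where your proposal is too optimistic: a Neumann-series comparison of $(\overline{D}+\overline{C}+\lambda i)^{-1}$ with $(\overline{D}+\lambda i)^{-1}$ as in Lemma \ref{lem:fDC_loc} handles the support conditions for $P_0\chi(\overline{D}+\overline{C})P_0$, but it does not by itself control the Poisson operator, which is not a function of $\overline{D}+\overline{C}$ on the cylinder but a $t$-dependent family $\sqrt{2|\lambda|}e^{-t|\lambda|}$ evaluated at $D_\pa+C_\pa$. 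The paper needs a separate approximation scheme for this: an $L^2$-in-$t$ approximation of $\sqrt{|\lambda|}e^{-|\lambda|}$ by polynomials in $(1+\lambda^2)^{-1}$ (Lemma \ref{lem:absapp}), a decomposition $V=P_RV+(1-P_R)(V-V_f)+(1-P_r)(V_f-V_{f,D})+(P_r-P_R)(V_f-V_{f,D})+\dots$ comparing with the unperturbed $V_D$, and the spectral gap of $D_\pa+C_\pa$ to kill $P_RV$. So your plan is sound, but Step~3 should be expanded to include the construction of the covering isometry and these estimates; "recognizing" the class via the classical Wiener--Hopf picture is precisely the step that cannot be taken off the shelf in the $D^*/C^*$ framework.
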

  \item Finally we recall the following commutative diagram from \cite[Proposition 3.1]{PS-Stolz}:
  \begin{equation}\label{eq:defjs}
  \begin{CD}
    K_*(D^*(\boundary \widetilde{W})^\Gamma) @>{j_*}>> K_*(D^*(\widetilde{W})^\Gamma)\\
    @V{\iso}V{j_\boundary}V @V{\iso}V{c}V\\
    K_*(\relD{[0,\infty)\times\boundary \widetilde{W}}{\boundary \widetilde{W}}^\Gamma) @>{j_+}>>
    K_*(\relD{\widetilde{W}_\infty}{\widetilde{W}}^\Gamma)
  \end{CD}
\end{equation}

\end{enumerate}
Granted these properties one gets
\begin{equation*}
\iota_* (\Ind^{{\rm rel}} (D+C))=  \pa [P \,\chi (\overline{D}+ \overline{C})_+ \, P] = j_+ ( \pa 
[P_0 \, \chi (\overline{D}+ \overline{C})_+ \, P_0]) = j_+ ( j_{\pa} \rho
(D_\pa + C)). 
\end{equation*}
Once we apply $c^{-1}$ to both sides we obtain precisely the equality in the
theorem, i.e.~$\iota_* (\Ind (D, C)) =  j_* (\rho (D_\pa + C_\pa))$.

\medskip
\noindent
{\it Proof of \eqref{key-for-chopping}.}
We go back to the notation adopted in the proof of Proposition  \ref{prop:key};
thus with a small abuse of notation we don't distinguish between 
$P_0$ and $P$. Recall that we
identify the
image of $P_0$ in $L^2(\reals\times\widetilde{W})$ with the corresponding
subspace of $L^2(\widetilde{W}_\infty)$ (in order to lighten the notation we do not
write the vector bundles in the $L^2$-spaces). This way, we consider
$P_0\chi(\overline{D}+\overline{C})P_0$ as an operator on
$L^2(\widetilde{W}_\infty)$.

First of all we remark that thanks to Proposition \ref{prop:key}
we only need to establish \eqref{key-for-chopping} for one specific
chopping function; we therefore choose $\chi(x)=x/\sqrt{1+x^2}$.
We know, see \cite[Lemma 4.12]{PS-Stolz}, that 
$\chi (D) - P_0 \chi (\overline{D}) P_0 \in
D^* (\widetilde{W}\subset \widetilde{W}_\infty)^\Gamma $. We also know from
\cite[Lemma 5.8]{higson-roeI} that $$\chi (\overline{D}) - \chi (\overline{D}
+ \overline{C}) \in C^* (\RR\times\pa\widetilde{W})^\Gamma \;;\quad 
\chi (D) - \chi (D+C) 
\in C^* (\widetilde{W}_\infty)^\Gamma\,.$$
Then we write
\begin{equation*}\chi (D+C) - P_0 \chi (\overline{D}+ \overline{C}) P_0=
(  \chi (D+C) - \chi (D)) + (\chi (D) - P_0 \chi (\overline{D}) P_0)
+ (P_0 \chi (\overline{D}) P_0 - P_0 \chi (\overline{D}+ \overline{C}) P_0).
\end{equation*}
As already remarked,  the second summand on the right hand side is
an element in $D^* (\widetilde{W}\subset \widetilde{W}_\infty)^\Gamma$. We will 
prove that the sum of the first and third term
on the right-hand side is in  $C^* (\widetilde{W}\subset
\widetilde{W}_\infty)^\Gamma$.
To this end we write an explicit expression for this sum, using that $\chi(x)=x/\sqrt{1+x^2}$. Indeed, {using once again} \cite[Lemma 5.8]{higson-roeI}, we can write
this sum as operator norm convergent integral
\begin{multline*}
\frac{1}{\pi}\int_1^\infty \frac{t}{\sqrt{t^2-1}} \left( 
(D+it)^{-1} C (D+C+it)^{-1} -
P_0 (\overline{D} + it)^{-1} \overline{C} (\overline{D}+ \overline{C}  + it)^{-1} P_0
\right) dt\\
+ \frac{1}{\pi}\int_1^\infty \frac{t}{\sqrt{t^2-1}} \left( 
(D-it)^{-1} C (D+C-it)^{-1} -
P_0 (\overline{D} - it)^{-1} \overline{C} (\overline{D}+ \overline{C}  - it)^{-1} P_0
\right) dt .
\end{multline*}
Therefore  it suffices to show that
the integrands are in $C^* (\widetilde{W}\subset
\widetilde{W}_\infty)^\Gamma$. 
Let us consider the first integral and the t-dependent operator there:
\begin{equation}\label{first-expression}
(D+it)^{-1} C (D+C+it)^{-1} -
P_0 (\overline{D} + it)^{-1} \overline{C} (\overline{D}+ \overline{C}  +
it)^{-1} P_0 .
\end{equation}
Writing $1=P_0 + (1-P_0)$ and reasoning as we did at the end of the proof 
of Proposition \ref{prop:key} (after the proof of Lemma \ref{lem:fDC_loc}), 
we see that \eqref{first-expression}
is equal, up to a term in $C^* (\widetilde{W}\subset
\widetilde{W}_\infty)^\Gamma$,
to 
\begin{multline*}
P_0 (D+it)^{-1} P_0 C P_0 (D+C+it)^{-1} P_0 -
P_0 (\overline{D} + it)^{-1} P_0 \overline{C}  P_0 (\overline{D}+ \overline{C}  + it)^{-1} P_0\\
=P_0 (D+it)^{-1} P_0 C P_0 \left( P_0 (D+C+it)^{-1} P_0 - 
P_0 (\overline{D}+ \overline{C}  + it)^{-1} P_0 \right) +\\
\left( P_0 (D+it)^{-1} P_0 - P_0 (\overline{D} + it)^{-1} P_0 \right)
P_0 C P_0 (\overline{D}+ \overline{C}  + it)^{-1} P_0 
\end{multline*}
since, by definition,
$P_0  \overline{C} P_0 = P_0 C P_0$.
We already know that $\left( P_0 (D+C+it)^{-1} P_0 - 
P_0 (\overline{D}+ \overline{C}  + it)^{-1} P_0 \right)$
and $\left( P_0 (D+it)^{-1} P_0 - P_0 (\overline{D} + it)^{-1} P_0 \right)$
are elements in 
$C^* (\widetilde{W}\subset \widetilde{W}_\infty)^\Gamma$.
Since the other factors are all norm limits of finite propagation operators, thus multipliers
of $C^* (\widetilde{W}\subset \widetilde{W}_\infty)^\Gamma$,
we see that \eqref{first-expression} is in $C^* (\widetilde{W}\subset \widetilde{W}_\infty)^\Gamma$. The proof of \eqref{key-for-chopping} is complete.

\subsection{Proof of the cylinder delocalized index theorem for perturbed operators}

We now show Theorem \ref{theo:cyl}, namely
that  if $j_{\pa}$ is the isomorphism $K_* (D^* (\pa \widetilde{W})^\Gamma)
\to K_* (D^* (\pa \widetilde{W}\subset \RR_{\geq}\times  \pa \widetilde{W})^\Gamma)$
induced by the inclusion, then
$\pa [P_0 \, \chi (\overline{D}+ \overline{C})_+ \, P_0]=j_{\pa} \rho (D_\pa + C_\pa)\quad\text{in}\quad 
K_* (D^* ( \pa\widetilde{W}\subset \RR_{\geq}\times
\pa\widetilde{W})^\Gamma)$, where we abbreviate $\RR_{\geq}:=[0,\infty)$.

\noindent
 We wish to point out that our  arguments, although somewhat lengthy, are
  elementary. It is certainly possible to envisage a proof
based on the Volterra expansion for the wave operator of a perturbed Dirac
operator. However,
doing this properly does require some non-trivial work and, in addition, it
would not 
generalize easily to Lipschitz manifolds. This is why we have followed the
route presented below.


\begin{notation} 
We set $\widetilde{M}:= \pa \widetilde{W}$.
We denote the Dirac operator on $\widetilde{M}$, the cross-section
of the cylinder, by $D$ and
the Dirac operator on $\RR\times \widetilde{M}$ by $\overline{D}$.
We denote a  trivializing perturbation for $D$ as $C$
and we denote the resulting perturbation on  $\RR\times \widetilde{M}$,
the one obtained by extending $C$ to be constant in the $\RR$-direction,
by $\overline{C}$. We use
the symbol $\L^2 (\widetilde{M})$ for the $L^2$-sections of the relevant
Clifford bundle. Similarly we employ $\L^2 (\RR\times \widetilde{M})$ and 
$\L^2 (\RR_{\geq} \times \widetilde{M})$.
\end{notation}

 \begin{proposition}\label{prop:v-covers}
 The bounded  linear operator 
\begin{equation}\label{V}
V\colon \L^2 (\widetilde{M})\to \L^2 ([0,\infty)\times \widetilde{M}),\qquad
s\mapsto Vs,\;(Vs)(t):= \sqrt{2|D+C|}e^{-t|D+C|}(s)\,.
\end{equation}
 covers in the $D^*$-sense the inclusion $i\colon M\hookrightarrow
 \RR_{\geq}\times M$, $m\to (0,m)$.
 \end{proposition}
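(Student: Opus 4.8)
The plan is to unwind the definition of ``$V$ covers $i$ in the $D^*$-sense'' (see \cite[Section 1]{PS-Stolz}): one must check that $V$ is a bounded $\Gamma$-equivariant operator which is pseudolocal relative to $i$ (that is, $\phi V-V(\phi\circ i)$ is compact for every $\phi\in C_0([0,\infty)\times\widetilde M)$, where $\phi\circ i$ is the function $m\mapsto\phi(0,m)$ on $\widetilde M$) and has controlled support near the graph of $i$ (for every $\epsilon>0$ there is $R>0$ with $\|\psi V\phi\|<\epsilon$ whenever $d(\supp\psi,i(\supp\phi))>R$). Boundedness and equivariance are immediate: since $D+C$ is $L^2$-invertible, $\spec(|D+C|)\subseteq[\delta,\infty)$ for some $\delta>0$; writing $g_t(\lambda):=\sqrt{2|\lambda|}\,e^{-t|\lambda|}$, a function in $C_0(\reals)$ for $t>0$, we have $(Vs)(t)=g_t(D+C)s$, and the elementary identity $\int_0^\infty 2\lambda e^{-2t\lambda}\,dt=1$ for $\lambda>0$ together with the spectral theorem gives $\|Vs\|^2=\int_0^\infty\|g_t(D+C)s\|^2\,dt=\langle s,s\rangle$, so $V$ is an isometry; equivariance follows from that of $D$ and $C$.

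For the two remaining properties I would split the $t$-integral as $[0,t_0]\cup[t_0,T]\cup[T,\infty)$. On $[T,\infty)$ the bound $\int_T^\infty\|g_t(D+C)s\|^2\,dt=\langle e^{-2T|D+C|}s,s\rangle\le e^{-2T\delta}\|s\|^2$ makes this contribution uniformly small (and in particular confines the mass of $Vs$ to a bounded collar of $\{0\}\times\widetilde M$, which takes care of the ``$t$-direction'' of the support condition). On $[t_0,T]$, Lemma \ref{lem:ancon} applied on $\widetilde M$ with the perturbation $C$ gives $g_t(D+C)\in C^*(\widetilde M)^\Gamma$, and $t\mapsto g_t(D+C)$ is norm-continuous there, so $\{g_t(D+C):t\in[t_0,T]\}$ is norm-compact in $C^*(\widetilde M)^\Gamma$, hence uniformly approximable by finite-propagation operators; for the pseudolocality part one uses in addition that these operators are locally compact and that $\int_{t_0}^\infty\|K_t\|^2\,dt<\infty$ for $K_t$ the relevant commutators, so that the corresponding piece of $\phi V-V(\phi\circ i)$ is a norm-convergent Bochner integral of compact operators.

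The delicate range is $[0,t_0]$, where $\|g_t(D+C)\|\sim t^{-1/2}\to\infty$. The key point is that, although $g_t$ itself blows up as $t\to 0$, its frequency localization does not: using the spectral gap one may replace $g_t$ by a smooth function agreeing with it on $\spec(D+C)$ whose Sobolev norms $\|g_t\|_{H^k(\reals)}$, hence the weighted norms $\int(1+|s|)^k\,|\widehat{g_t}(s)|\,ds$, stay bounded for $t\in(0,t_0]$. Feeding this into the Duhamel/Dyson expansion of $e^{is(D+C)}$ in powers of the bounded perturbation $C$ (and approximating $C$ in norm by a finite-propagation operator) shows that $\|\psi\,g_t(D+C)\,\phi\|$ is uniformly small on $(0,t_0]$ once $\supp\psi$ and $\supp\phi$ are far enough apart, and that $\|[\phi,g_t(D+C)]\|$ is uniformly bounded on $(0,t_0]$, so that the part of $\phi V-V(\phi\circ i)$ coming from $[0,t_0]$ has operator norm tending to $0$ as $t_0\to 0$. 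Combining the three ranges, and then sending $t_0\to 0$ and $R\to\infty$, yields both required properties. I expect this uniform control of $g_t(D+C)$ near $t=0$ to be the main obstacle: it is exactly where the loss of unit propagation speed for the perturbed operator is felt, and it is overcome by trading finite propagation of $D$ and boundedness of $C$ against the frequency-localization of $g_t$.
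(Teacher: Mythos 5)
Your splitting of the $t$-integral handles the outer ranges correctly: the estimate on $[T,\infty)$ via the spectral gap is exactly the paper's Lemma \ref{lem:awaysmall}, and on $[t_0,T]$ the appeal to Lemma \ref{lem:ancon} (norm continuity plus local compactness of $g_t(D+C)\in C^*(\widetilde M)^\Gamma$) is sound. The gap is in the range $[0,t_0]$, which you rightly identify as the heart of the matter but do not actually close, and the mechanism you propose for it is internally inconsistent. You assert that one can modify $g_t$ off $\spec(D+C)$ so that $\norm{g_t}_{H^k(\reals)}$ and $\int(1+\abs{s})^k\abs{\widehat{g_t}(s)}\,ds$ stay bounded as $t\to 0$; but any bound on $\norm{\widehat{g_t}}_{L^1}$ (or on $\norm{g_t}_{H^1}$, via the embedding $H^1(\reals)\hookrightarrow L^\infty(\reals)$) controls $\sup_{\lambda\in\spec(D+C)}\abs{g_t(\lambda)}=\norm{g_t(D+C)}$, which you yourself note behaves like $t^{-1/2}$. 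So the frequency localization of $g_t$ \emph{does} degenerate as $t\to 0$, and the Duhamel/Dyson expansion of $e^{is(D+C)}$ must be run with Fourier weights that blow up, traded against the $t$-integration; the scaling $g_t(\lambda)=t^{-1/2}g_1(t\lambda)$ makes this plausible, but the needed modification of $g_1$ on the shrinking interval $(-t\delta,t\delta)$ breaks the clean scaling, and none of the required uniform estimates are supplied. Note also that under the definition used in the paper you must exhibit $V$ as a norm limit of operators of genuinely finite propagation which are pseudolocal; the $[0,t_0]$ piece of $V$ has norm close to $1$ (its square on $s$ is $\langle(1-e^{-2t_0\abs{D+C}})s,s\rangle$), so it cannot be discarded as $t_0\to 0$, and quasi-locality ($\norm{\psi V\phi}$ small for distant supports) is a priori weaker than what is required.

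It is worth saying that your route --- attacking the perturbed wave group $e^{is(D+C)}$ by a Volterra/Dyson expansion in the bounded perturbation --- is exactly the alternative the paper explicitly mentions and deliberately avoids, on the grounds that ``doing this properly does require some non-trivial work and, in addition, it would not generalize easily to Lipschitz manifolds.'' The paper instead never touches the perturbed wave operator: it approximates $\sqrt{\abs\lambda}\,e^{-\abs\lambda}$ in the relevant $L^2$-sense by polynomials in $(1+\lambda^2)^{-1}$ (Lemma \ref{lem:absapp}), writes $V=V_f+(V-V_f)$ with $V-V_f$ small by the abstract direct-integral bound of Lemma \ref{lem:opnorm}, and then compares $V_f$ with its unperturbed analogue $V_{f,D}$ purely through resolvent identities (Lemmas \ref{lem:decomp}--\ref{lem:comp_midterm}); the unperturbed $(1-P_R)V_D$ is the only place where unit propagation speed is used, and there it is quoted from \cite{PS-Stolz}. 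If you want to salvage your approach you would have to supply the uniform small-$t$ estimates for the Dyson series; otherwise the resolvent comparison is the shorter and more robust path.
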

 
 \begin{proof}
 We prove this in  Section \ref{sec:V_covers}. \end{proof}

 \begin{proposition}\label{prop:deformed-in-d}
 The operator 
  $\frac{|D+C| + \pa_t}{(D+C)-\pa_t}$ belongs to 
 $ D^* (\RR\times \widetilde{M})^\Gamma$.
 \end{proposition}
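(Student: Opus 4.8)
The plan is to write the operator in the statement as a product of two operators each of which manifestly lies in $D^* (\RR\times \widetilde{M})^\Gamma$ because it is obtained from the functional calculus of the invertible perturbed cylinder operator $\overline D+\overline C$. Set $A:=D+C$, a self-adjoint $L^2$-invertible operator on $\L^2(\widetilde M)$ whose spectrum avoids some interval $(-\epsilon,\epsilon)$, and abbreviate $U:=\bigl(\abs{A}+\partial_t\bigr)\bigl(A-\partial_t\bigr)^{-1}$. Working on $\L^2(\RR\times\widetilde M)\cong L^2(\RR)\otimes\L^2(\widetilde M)$ we regard $A$ and $\partial_t$ as commuting; then $A^2-\partial_t^2\ge\epsilon^2>0$ is boundedly invertible and $A\pm\partial_t$ are invertible (since $(A\pm\partial_t)^*(A\pm\partial_t)=A^2-\partial_t^2$), so $U$ is a well-defined bounded operator. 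The numerator and denominator commute, so the fraction in the statement is unambiguous and equals $U$; by the Fourier transform in $t$ it is multiplication by $(\abs\lambda+i\xi)/(\lambda-i\xi)$ on the joint spectrum, hence unitary, though we shall not need this.

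First I would recall that $\sgn(\overline D+\overline C)\in D^* (\RR\times \widetilde{M})^\Gamma$. This follows exactly as in Lemma~\ref{lem:ancon}, applied on the full cylinder $\RR\times\widetilde M$: Proposition~\ref{prop:hr-controlled-sharp} with $\mathfrak A=D^* (\RR\times \widetilde{M})^\Gamma$, $\mathfrak J=C^* (\RR\times \widetilde{M})^\Gamma$ and $S=\overline C$ (a norm limit of $\Gamma$-equivariant finite propagation operators, hence a multiplier of $\mathfrak J$) gives $\chi(\overline D+\overline C)\in D^*(\RR\times\widetilde M)^\Gamma$ for every $\chi\in C([-\infty,\infty])$, and one may take $\chi$ equal to $\sgn$ on the spectrum because $\overline D+\overline C$ is $L^2$-invertible. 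By the structure of the signature operator on the cylinder, $\overline D+\overline C=\left(\begin{smallmatrix}0&A-\partial_t\\ A+\partial_t&0\end{smallmatrix}\right)$ with respect to the $\ZZ/2$-grading, so $(\overline D+\overline C)^2=(A^2-\partial_t^2)\oplus(A^2-\partial_t^2)$ and one off-diagonal entry of $\sgn(\overline D+\overline C)$ is $\Phi:=(A+\partial_t)(A^2-\partial_t^2)^{-1/2}$, the other being $\Phi^*=(A-\partial_t)(A^2-\partial_t^2)^{-1/2}$. Compressing $\sgn(\overline D+\overline C)$ to its off-diagonal corners by the grading involution (a zero-propagation multiplier) and using the canonical identification of the two copies $\widetilde E^{\pm}$ on the cylinder, I conclude $\Phi,\Phi^*\in D^* (\RR\times \widetilde{M})^\Gamma$.

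The crucial step — and, I expect, the only genuinely delicate one — is to show that $\Theta:=(\abs{A}+\partial_t)(A^2-\partial_t^2)^{-1/2}\in D^* (\RR\times \widetilde{M})^\Gamma$. One cannot argue by a functional calculus here, since $\abs{A}$ is non-local and $\partial_t(A^2-\partial_t^2)^{-1/2}$ does not decay in the $t$-frequency: the point is to stay inside the $C^*$-algebra $D^* (\RR\times \widetilde{M})^\Gamma$. Since $\Phi+\Phi^*=2A(A^2-\partial_t^2)^{-1/2}$ is self-adjoint and lies in $D^*$, so does $\abs{\Phi+\Phi^*}=\bigl((\Phi+\Phi^*)^2\bigr)^{1/2}$ (closedness of the $C^*$-algebra under $x\mapsto x^2$ and $x\mapsto\sqrt x$), and a functional-calculus identity in the commutative algebra generated by $A$ and $\partial_t$ gives $\abs{\Phi+\Phi^*}=2\abs{A}(A^2-\partial_t^2)^{-1/2}$; likewise $\Phi-\Phi^*=2\partial_t(A^2-\partial_t^2)^{-1/2}\in D^*$. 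Adding, $\Theta=\tfrac12\abs{\Phi+\Phi^*}+\tfrac12(\Phi-\Phi^*)\in D^* (\RR\times \widetilde{M})^\Gamma$.

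To finish I would use the identity $U=\Theta\Phi$: all operators involved are Borel functions of the commuting pair $(A,\partial_t)$, and $(A+\partial_t)(A^2-\partial_t^2)^{-1}=(A-\partial_t)^{-1}$, whence
\[
\Theta\Phi=(\abs{A}+\partial_t)(A^2-\partial_t^2)^{-1/2}(A+\partial_t)(A^2-\partial_t^2)^{-1/2}=(\abs{A}+\partial_t)(A+\partial_t)(A^2-\partial_t^2)^{-1}=(\abs{A}+\partial_t)(A-\partial_t)^{-1}=U .
\]
Since $D^* (\RR\times \widetilde{M})^\Gamma$ is an algebra containing both $\Theta$ and $\Phi$, it contains $U$; $\Gamma$-equivariance is automatic, as $A$, $\partial_t$ and the grading are $\Gamma$-equivariant. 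As indicated, the main obstacle is the membership $\Theta\in D^*$, which is handled by the $\abs{\Phi+\Phi^*}$ trick rather than by a (non-existent) Dirac-operator functional calculus for $\abs{A}$.
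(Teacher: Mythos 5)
Your argument is correct, and it takes a genuinely different --- and considerably shorter --- route than the paper. The paper splits $\frac{\abs{D+C}+\pa_t}{(D+C)-\pa_t}$ as $\abs{D+C}\,\bigl((D+C)-\pa_t\bigr)^{-1}+\pa_t\,\bigl((D+C)-\pa_t\bigr)^{-1}$ and analyses each factor through the Sobolev scale $H^1,H^2$ defined by the operator itself: it shows that $\bigl((D+C)-\pa_t\bigr)^{-1}\colon L^2\to H^1$ and $\abs{D+C},\pa_t\colon H^1\to L^2$ are norm limits of $\Gamma$-equivariant finite propagation operators whose commutators with compactly supported functions are compact (Propositions \ref{prop:partial_H1}--\ref{prop:finproful}), using a Rellich-type compactness argument on tori and, for $(1+D^2)^{-1}\colon L^2\to H^2$, a doubling trick with microlocal techniques. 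Your route stays entirely inside the $C^*$-algebra $D^*(\RR\times\widetilde M)^\Gamma$: the only analytic input is that chopping functions of the perturbed cylinder operator lie in $D^*$, which is the two-sided-cylinder instance of Lemma \ref{lem:ancon} (via Proposition \ref{prop:hr-controlled-sharp}, with $\overline C$ a multiplier of $C^*(\RR\times\widetilde M)^\Gamma$ as a norm limit of finite propagation operators) and is already used elsewhere in the paper. From the off-diagonal corner $\Phi$ of $\sgn(\overline D+\overline C)$ you then manufacture $\Theta$ purely by $C^*$-operations (adjoint, square, positive square root) and recover the target operator as $\Theta\Phi$; the identities $\abs{\Phi+\Phi^*}=2\abs{D+C}\bigl((D+C)^2-\pa_t^2\bigr)^{-1/2}$ and $\Theta\Phi=U$ are legitimate statements in the joint Borel functional calculus of the strongly commuting pair $(D+C,\,i\pa_t)$ on $L^2(\RR)\tensor\L^2(\widetilde M)$, and the passage from $\sgn(\overline D+\overline C)\in D^*$ of the graded bundle to $\Phi\in D^*$ of a single copy is the same corner/identification manoeuvre the paper performs for $\chi(\overline D+\overline C)_+$. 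What your approach buys is brevity and complete independence from pseudodifferential and microlocal input (the paper itself flags that Proposition \ref{prop:finpro} needs a separate functional-analytic treatment to cover Lipschitz manifolds); what it gives up are the explicit finite-propagation approximants and the $H^1\to L^2$ mapping properties, which, however, are not needed elsewhere in the paper. One point to make explicit if you write this up: your argument leans on the product structure of the cylinder (so that $D+C$ and $\pa_t$ strongly commute and $\overline D+\overline C$ has the stated off-diagonal matrix form), which is exactly the setting of Proposition \ref{prop:deformed-in-d}, so this is a feature rather than a gap.
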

 \begin{proof}
We prove this in Section \ref{sec:belong_to_D}.
 \end{proof}
  
  Granted these two Propositions, we can proceed as in \cite[Section
  4.4]{PS-Stolz}. We have the following proposition.

  \begin{proposition}\label{prop:Q}
    Set $Q:= P_0 \frac{|D+C| -\pa_t}{(D+C)+\pa_t} P_0 \colon \L^2
    (\RR_{\geq}\times\widetilde{M})\to \L^2 ( 
    \RR_{\geq}\times \widetilde{M})$. Then
 \begin{equation}\label{first} (P_0 \frac{|D+C| +\pa_t}{(D+C)-\pa_t} P_0)\circ
   Q= \Id_{\L^2 ( \RR_{\geq}\times\widetilde{M})};\qquad Q\circ (P_0
   \frac{|D+C| +\pa_t}{(D+C)-\pa_t} 
   P_0 )= \Id_{\L^2 (\RR_{\geq}\times\widetilde{M})}
   - \Pi\end{equation}
 with $\Pi:= V \chi_{[0,\infty)} (D+C) V^*$.
\end{proposition}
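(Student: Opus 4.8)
The plan is to verify the two operator identities in \eqref{first} by a direct computation on the model cylinder $\RR\times\widetilde M$, using the spectral decomposition of the self-adjoint invertible operator $B:=D+C$ on $\L^2(\widetilde M)$. First I would work on the full cylinder, where the operator $B+\pa_t$ acting on $\L^2(\RR\times\widetilde M)\cong \L^2(\RR)\hat\otimes \L^2(\widetilde M)$ is, after Fourier transform in the $t$-variable, multiplication by $B+i\xi$; since $B$ is invertible, so is $B+i\xi$ uniformly in $\xi$, hence $(B+\pa_t)^{-1}=\frac{|B|+\pa_t}{B-\pa_t}\cdot(|B|+\pa_t)^{-1}$ is bounded — this is the content of Proposition \ref{prop:deformed-in-d} — and similarly $(B-\pa_t)$ is invertible. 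Thus on the full cylinder $\frac{|B|+\pa_t}{B-\pa_t}$ and $\frac{|B|-\pa_t}{B+\pa_t}$ are genuine bounded operators that are inverse to each other, because $\frac{|B|+\pa_t}{B-\pa_t}\cdot\frac{|B|-\pa_t}{B+\pa_t}=(|B|+\pa_t)(B-\pa_t)^{-1}(|B|-\pa_t)(B+\pa_t)^{-1}$ and all four factors commute pairwise (they are functions of the two commuting operators $B$ and $\pa_t$), so the product telescopes to $(|B|^2-\pa_t^2)(B^2-\pa_t^2)^{-1}=\Id$ since $|B|^2=B^2$.

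The real content is the compression to the half-cylinder $\RR_{\geq}\times\widetilde M$ via the projection $P_0$. Here I would use the explicit description of the operator $V$ from \eqref{V}: for $s\in\L^2(\widetilde M)$ one has $(Vs)(t)=\sqrt{2|B|}\,e^{-t|B|}s$, and a short computation with the spectral calculus of $B$ shows $V^*V=\Id_{\L^2(\widetilde M)}$, so $\Pi:=V\chi_{[0,\infty)}(B)V^*$ is (on the positive spectral subspace of $B$) the orthogonal projection onto the range of $V$, which is exactly the space of $L^2$-solutions on $\RR_{\geq}$ of the ODE $\pa_t u=-|B|u$ restricted to that subspace. The point of the identity $Q\circ(P_0\frac{|B|+\pa_t}{B-\pa_t}P_0)=\Id-\Pi$ is then the classical fact that on the half-line the operator $B+\pa_t$ with no boundary condition has kernel spanned by these decaying solutions (coming from the positive part of the spectrum of $B$, where $e^{-t|B|}$ decays), while its "other composition order" has no kernel. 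Concretely I would: (i) decompose $\L^2(\RR_{\geq}\times\widetilde M)$ according to the sign of the spectrum of $B$; (ii) on each half-line with a fixed spectral parameter $\lambda\neq 0$ of $B$, solve explicitly — the operators become scalar first-order ODE operators $\lambda\pm\pa_t$ on $L^2[0,\infty)$, and one checks by integration against the explicit Green's kernels $G_\pm(t,t')$ that $P_0(|\lambda|+\pa_t)(\lambda-\pa_t)^{-1}P_0$ and $P_0(|\lambda|-\pa_t)(\lambda+\pa_t)^{-1}P_0$ compose to the identity in one order and to $\Id$ minus the rank-one projection onto $\sqrt{2|\lambda|}e^{-t|\lambda|}$ (present only when $\lambda>0$) in the other; (iii) reassemble over the spectrum to obtain the stated global identities, with $\Pi=V\chi_{[0,\infty)}(B)V^*$ emerging as the direct integral of these rank-one projections. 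This is precisely the structure of the analogous computation in \cite[Section 4.4]{PS-Stolz}, and since $C$ enters only through $B=D+C$ being self-adjoint and invertible, the argument there transfers verbatim; I would phrase the proof as such a reduction.

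The main obstacle is bookkeeping with the boundary terms when compressing by $P_0$: the identity $P_0(A_1A_2)P_0=(P_0A_1P_0)(P_0A_2P_0)+P_0A_1(1-P_0)A_2P_0$ produces a "defect" term, and one must show this defect is accounted for exactly by $\Pi$. Controlling it requires the explicit exponential form of the Green's kernels for $\lambda\pm\pa_t$ on $[0,\infty)$ and the fact — which is where $V$ genuinely enters — that the half-line kernel of $B+\pa_t$ is the range of $V$; the map $V$ is designed so that $VV^*$ restricted appropriately is this defect projection, so the only care needed is matching the normalization $\sqrt{2|B|}$ in \eqref{V} against the normalization of the Green's kernel, after which the identity $Q\circ(P_0\frac{|B|+\pa_t}{B-\pa_t}P_0)=\Id-\Pi$ falls out. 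No subtlety of $C^*$-algebraic nature arises here — Propositions \ref{prop:v-covers} and \ref{prop:deformed-in-d} already deliver membership in $D^*$; the present proposition is a purely Hilbert-space (indeed, after spectral decomposition, one-variable ODE) identity.
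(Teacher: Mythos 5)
Your proposal is correct and matches the paper's own treatment: the paper proves Proposition \ref{prop:Q} simply by observing that the computation of \cite[Section 4.4]{PS-Stolz} is purely functional-analytic and rests only on the Browder--Garding decomposition of the self-adjoint invertible operator $D+C$, which is exactly the spectral-decomposition/half-line ODE reduction you describe. The extra detail you supply (the scalar Wiener--Hopf computation with Green's kernels and the identification of the defect projection with $V\chi_{[0,\infty)}(D+C)V^*$) is precisely what is carried out in the cited section of the companion paper.
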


 This implies that 
 \begin{equation}\label{eq:compind}
 \pa [P_0  \frac{|D+C| +\pa_t}{(D+C)-\pa_t} P_0 ] = [V \chi_{[0,\infty)} (D+C) V^*]\quad\text{in}\quad 
 K_0 (D^* (\widetilde{M}\subset \RR_{\geq}\times \widetilde{M})^\Gamma)\,.
\end{equation}

 The proof of  Proposition \ref{prop:Q} proceeds as in \cite[Section
 4.4]{PS-Stolz}, given that the arguments there are
 purely functional analytic and rest ultimately on the Browder-Garding
 decomposition for the self-adjoint operator $D+C$.
 Now, on the right hand side of Equation \eqref{eq:compind} we have, by
 definition,
  $j_{\pa} \rho (D_\pa + C_\pa)$
 whereas the left hand side is equal to 
 $\pa [P_0 \, \chi (\overline{D}+ \overline{C})_+ \, P_0]$. 
 This assertion, namely that
 \begin{equation}\label{eq:bd}
\pa [P_0  \frac{|D+C| +\pa_t}{(D+C)-\pa_t} P_0]= 
 \pa [P_0 \, \chi (\overline{D}+ \overline{C})_+ \, P_0]
\end{equation}
  is proved as in \cite{PS-Stolz}, using exactly  the deformation argument
  explained there, after the proof of Proposition 4.33.
 Thus, assuming 
 Proposition \ref{prop:v-covers} and 
 Proposition \ref{prop:deformed-in-d}, we have proved the cylinder delocalized
 index theorem \ref{theo:cyl}.

 \subsubsection{Proof of Proposition \ref{prop:v-covers}: the map $V$  covers
   the {inclusion} in the  $D^*$-sense}\label{sec:V_covers}

{We recall first of all that, by definition, 
$V\colon \L^2 (\widetilde{M})\to \L^2 ([0,\infty)\times \widetilde{M})$
 covers in the $D^*$-sense the inclusion $i\colon \widetilde{M}\hookrightarrow
 \RR_{\geq}\times \widetilde{M}$ if 
  $V$ is the norm-limit 
    of bounded $\Gamma$-equivariant maps $U$ satisfying the following two conditions:
\begin{itemize}    
\item there is an $R>0$ such that $\phi U \psi =0$ if 
$d({\rm Supp} (\phi),i({\rm Supp} (\psi)))>R$,
    for $\phi\in C_c ( \RR_{\geq}\times \widetilde{M})$ and $\psi\in C_c (\widetilde{M})$;
    \item  $\phi U \psi$
    is compact for each $\phi\in C_0 ( \RR_{\geq}\times \widetilde{M})$
    and $\psi\in C_c (\widetilde{M})$ such that ${\rm Supp} (\phi) \cap i ({\rm Supp} (\psi))=
    \emptyset$
    \end{itemize}}

\begin{notation}
 {In all this subsection we shall denote by $M$ the total space of a 
Riemannian  $\Gamma$-Galois covering with compact base $M/\Gamma$.
As before, we write $D$ for a Dirac type operator on $M$, acting on sections
of a Clifford module bundle.  We will in the notation ignore this bundle.}
\end{notation}

Let $E\colon
L^2(M)\to L^2(M)$ be a {bounded equivariant self-adjoint operator}, norm limit
of equivariant finite propagation operators and with the property that 
 $D+E$ is invertible. 

Recall that we set
$V\colon L^2(M)\to L^2( [0,\infty)\times M) = L^2([0,\infty), L^2(M)); (Vu)(t)
=\sqrt{2\abs{D+E}} e^{-t\abs{D+E}} u$.

\begin{lemma}\label{lem:opnorm}
  Let $H$ be a Hilbert space and $W\colon H\to L^2([0,\infty),H)$ be an
  operator of the form $W(u)(t) = f_t(A)u$ for a self-adjoint operator $A$ on
  $H$ and a measurable function $ [0,\infty)\to C_b(\reals); t\mapsto f_t$.

  Then $\norm{W}^2 \le \sup_{\lambda\in \spec(A)}
  \int_0^{\infty}\abs{f_t(\lambda)}^2 \;dt$.
\end{lemma}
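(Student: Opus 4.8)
The plan is to reduce the estimate to a single scalar computation with the spectral measure of $A$ and then swap an order of integration via Tonelli. First I would fix $u\in H$, let $\mu_u$ be the associated scalar spectral measure on $\spec(A)$ (normalized so that $\mu_u(\spec(A))=\norm{u}^2$), and recall that for any bounded Borel function $g$ on $\spec(A)$ the spectral theorem gives $\norm{g(A)u}^2=\int_{\spec(A)}\abs{g(\lambda)}^2\,d\mu_u(\lambda)$. Applying this with $g=f_t$ for each fixed $t$ yields $\norm{W(u)(t)}^2=\norm{f_t(A)u}^2=\int_{\spec(A)}\abs{f_t(\lambda)}^2\,d\mu_u(\lambda)$.

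Next I would integrate in $t$ and interchange the integrals:
\begin{equation*}
\norm{Wu}^2=\int_0^\infty\norm{f_t(A)u}^2\,dt=\int_0^\infty\int_{\spec(A)}\abs{f_t(\lambda)}^2\,d\mu_u(\lambda)\,dt=\int_{\spec(A)}\Bigl(\int_0^\infty\abs{f_t(\lambda)}^2\,dt\Bigr)\,d\mu_u(\lambda).
\end{equation*}
The interchange is legitimate by Tonelli's theorem, the integrand being nonnegative, provided $(t,\lambda)\mapsto\abs{f_t(\lambda)}^2$ is jointly measurable on $[0,\infty)\times\spec(A)$; this holds because $t\mapsto f_t\in C_b(\reals)$ is measurable (so $t\mapsto f_t(\lambda)$ is measurable for each $\lambda$) while $\lambda\mapsto f_t(\lambda)$ is continuous for each $t$, i.e.\ $f$ is a Carath\'eodory function. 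Bounding the inner integral by $\sup_{\lambda\in\spec(A)}\int_0^\infty\abs{f_t(\lambda)}^2\,dt$ and using $\mu_u(\spec(A))=\norm{u}^2$ gives $\norm{Wu}^2\le\bigl(\sup_{\lambda\in\spec(A)}\int_0^\infty\abs{f_t(\lambda)}^2\,dt\bigr)\norm{u}^2$; taking the supremum over unit vectors $u$ yields the asserted bound on $\norm{W}^2$ (and if the supremum is infinite the inequality is trivial).

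The only genuine technical point — and the step I would be most careful with — is the measurability bookkeeping: one must check that $t\mapsto f_t(A)u$ is a (Bochner) measurable $H$-valued function so that $\norm{Wu}^2=\int_0^\infty\norm{f_t(A)u}^2\,dt$ is even meaningful, and that $(t,\lambda)\mapsto f_t(\lambda)$ is jointly measurable for Tonelli. Both are immediate once one notes that the bounded continuous functional calculus $g\mapsto g(A)$ is $1$-Lipschitz from $(C_b(\reals),\norm{\cdot}_\infty)$ to $\mathcal B(H)$, hence norm-continuous, so that the hypothesized measurability of $t\mapsto f_t$ transports directly to $t\mapsto f_t(A)$ and then to $t\mapsto f_t(A)u$; beyond this, no analytic input other than the spectral theorem and Tonelli's theorem is needed.
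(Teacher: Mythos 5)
Your proof is correct and follows essentially the same route as the paper's: express $\norm{f_t(A)u}^2$ via the spectral theorem as an integral over $\spec(A)$, interchange the two integrations by Fubini--Tonelli, and bound the inner $t$-integral by its supremum over the spectrum. The only (cosmetic) difference is that you use the scalar spectral measures $\mu_u$ where the paper uses the Browder--G\r{a}rding direct-integral decomposition $H=\int H_\lambda\,d\mu(\lambda)$; your added care about joint measurability and Bochner measurability is a welcome refinement the paper leaves implicit.
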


\begin{proof}
  By definition, for $u\in H$ we have $\abs{Wu}^2 = \int_0^{\infty}
  \abs{f_t(A)u}_H^2 \;dt$. Use the Brower-Garding decomposition as direct
  integral 
  $H=\int H_\lambda \;d\mu(\lambda)$ according to the spectral decomposition
  of the self-adjoint operator $A$. Then $u=\int_{\spec(A)} u_\lambda d\mu(\lambda)$
  with $u_\lambda\in H_\lambda$ and $\abs{u}_H^2=\int
  \abs{u_\lambda}_{H_\lambda}^2\;d\mu(\lambda)$. Moreover, for a function
  $f(\lambda)$ we have $f(A)u =\int_{\spec(A)} f(\lambda)
  u_\lambda\;d\mu(\lambda)$. Then
  \begin{equation*}
    \begin{split}
      \abs{Wu}^2 & =\int_0^{\infty} \abs{f_t(A)u}^2 \;dt       = \int_0^{\infty} \int_{\spec(A)} \abs{f_t(\lambda) u_\lambda}^2_{H_{\lambda}}
       \;d\mu(\lambda)\;dt\\
     & \stackrel{\text{Fubini}}{=} \int_{\spec(A)} \int_0^{\infty}
       \abs{f_t(\lambda)}^2 \;dt
     \abs{u_\lambda}_{H_{\lambda}}^2 d\mu(\lambda) 
    \le \left( \sup_{\lambda\in \spec(A)} \int_0^{\infty} \abs{f_t(\lambda)}^2\;dt
 \right)\cdot \underbrace{\int_{\spec(A)}
   \abs{u_\lambda}^2_{H_\lambda}\;d\mu(\lambda)}_{=\abs{u}^2_H} .
    \end{split}
\end{equation*}
\end{proof}

\begin{lemma}\label{lem:awaysmall}
  The operator $P_R V\colon L^2(M)\to L^2([0,\infty) \times M)$
  converges to 
  $0$ in norm for $R\to \infty$, where $P_R$ is the orthogonal projection onto
  $L^2( [R,\infty)\times M)$ 
\end{lemma}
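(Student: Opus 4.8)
My plan is to deduce this directly from the operator-norm bound of Lemma~\ref{lem:opnorm}. First I would note that, by the very definition of $V$, for $u\in L^2(M)$ we have $(P_R V u)(t)=f_t(D+E)u$, where $A:=D+E$ is self-adjoint on $H:=L^2(M)$ and
$f_t(\lambda):=\chi_{[R,\infty)}(t)\,\sqrt{2\abs{\lambda}}\,e^{-t\abs{\lambda}}$. For each fixed $t>0$ the function $\lambda\mapsto\sqrt{2\abs{\lambda}}\,e^{-t\abs{\lambda}}$ is bounded (its supremum over $\lambda\in\reals$ is $1/\sqrt{et}$), so $t\mapsto f_t$ is a bounded measurable family in $C_b(\reals)$ and Lemma~\ref{lem:opnorm} is applicable. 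It gives
\[
  \norm{P_R V}^2\;\le\;\sup_{\lambda\in\spec(D+E)}\int_0^{\infty}\abs{f_t(\lambda)}^2\,dt
  \;=\;\sup_{\lambda\in\spec(D+E)}\int_R^{\infty}2\abs{\lambda}\,e^{-2t\abs{\lambda}}\,dt .
\]

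The remaining ingredient is the only substantive one, and it is exactly the invertibility of $D+E$, which is part of the hypotheses built into the definition of $V$: it forces $0\notin\spec(D+E)$, and since the spectrum of the self-adjoint operator $D+E$ is closed there is $\delta>0$ with $\abs{\lambda}\ge\delta$ for every $\lambda\in\spec(D+E)$. For such $\lambda\neq0$ the elementary computation $\int_R^{\infty}2\abs{\lambda}\,e^{-2t\abs{\lambda}}\,dt=e^{-2R\abs{\lambda}}$ applies, whence $\norm{P_R V}^2\le e^{-2R\delta}$, which tends to $0$ as $R\to\infty$. This proves the claim. I do not anticipate any genuine obstacle: the Laplace-type integral is immediate, and the spectral gap $\delta$ is nothing but the invertibility assumption on $D+E$; the only thing worth a line of care is the routine verification that $t\mapsto f_t$ is admissible for Lemma~\ref{lem:opnorm}, which we dispatched above.
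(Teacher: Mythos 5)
Your proof is correct and follows essentially the same route as the paper's: apply Lemma~\ref{lem:opnorm} to the family $f_t(\lambda)=\chi_{[R,\infty)}(t)\sqrt{2\abs{\lambda}}e^{-t\abs{\lambda}}$ and use the spectral gap $\abs{\lambda}\ge\delta$ coming from the invertibility of $D+E$. Your explicit evaluation $\int_R^\infty 2\abs{\lambda}e^{-2t\abs{\lambda}}\,dt=e^{-2R\abs{\lambda}}\le e^{-2R\delta}$ just makes quantitative the uniform decay the paper asserts.
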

\begin{proof}
  By assumption, $D+E$ is invertible, i.e.~there is $\epsilon>0$ such that
  $(-\epsilon,\epsilon)\cap \spec(D+E)=\emptyset$. We can write $P_R\circ V
  (u)(t) =f^R_t(D+E) u$ with $f^R_t(\lambda)=
  \begin{cases}
    0; & t>R\\  \sqrt{2\abs{\lambda}} \exp(-t\abs{\lambda}); & t\le R
  \end{cases}$. For $\abs{\lambda}\ge \epsilon$, the
  $L^2$-norm of $t\mapsto f^R_t(\lambda)$ uniformly tends to $0$ as $R\to
  \infty$. By Lemma \ref{lem:opnorm}, $\norm{P_RV}\xrightarrow{R\to\infty} 0$.  
\end{proof}

We now treat $(1-P_R)V$. Define $V_D\colon L^2(M)\to
L^2([0,\infty)\times M)$ by $V_Du(t) = \sqrt{2\abs{D}}
e^{-t\abs{D}}u$ i.e.~exactly as $V$, but with $D+E$ replaced by $D$. We want
to prove the properties for $V$ by a comparison with $V_D$, where corresponding
properties have been established in \cite{PS-Stolz}. For the comparison
we use a resolvent trick, which requires some preparation.

  \begin{lemma}\label{lem:absapp}
    The function $\sqrt{\abs{\lambda}} e^{-\abs{\lambda}}$ can on $\reals$
    be approximated in supremum norm by $f(\lambda)$ which is a polynomial in
    $1/(\lambda^2+1)$ (without constant term, i.e.~vanishing at infinity) in
    such a way that even $e^{-\abs{\lambda}}$ is 
    approximated in $L^2(\reals)$ by $f(\lambda)/\sqrt{\abs{\lambda}}$.
  \end{lemma}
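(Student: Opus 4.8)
The plan is to reduce everything to a single scalar approximation problem: find, for each $\epsilon>0$, a polynomial $p$ with $p(0)=0$ so that $g(\lambda):=p\!\left(\tfrac{1}{\lambda^2+1}\right)$ is uniformly close to $\sqrt{\abs\lambda}\,e^{-\abs\lambda}$ and, \emph{simultaneously}, $g(\lambda)/\sqrt{\abs\lambda}$ is close in $L^2(\reals)$ to $e^{-\abs\lambda}$. The natural change of variable is $s=1/(\lambda^2+1)\in(0,1]$, under which $\lambda^2 = (1-s)/s$ and $\abs\lambda=\sqrt{(1-s)/s}$. Both target functions then become honest functions of $s$ on $(0,1]$: set $h(s):=\bigl((1-s)/s\bigr)^{1/4} e^{-\sqrt{(1-s)/s}}$, and note $h$ extends continuously to $[0,1]$ with $h(0)=0$ (since $\abs\lambda\to\infty$ forces exponential decay to win over the quarter-power growth) and $h(1)=0$. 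So $h\in C([0,1])$ with $h(0)=0$.

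First I would apply the Weierstrass approximation theorem on $[0,1]$ to $h$: there is a polynomial $q$ with $\norm{h-q}_{\infty,[0,1]}<\epsilon/2$. To arrange $q(0)=0$ as the lemma demands (``without constant term''), replace $q$ by $q(s)-q(0)$; since $\abs{q(0)}=\abs{q(0)-h(0)}<\epsilon/2$, the new polynomial still approximates $h$ to within $\epsilon$, and it has no constant term, hence $p(s):=q(s)-q(0)$ is a polynomial in $s$ vanishing at $s=0$. Translating back, $g(\lambda):=p\bigl(1/(\lambda^2+1)\bigr)$ is a polynomial in $1/(\lambda^2+1)$ with no constant term (equivalently, $g$ vanishes at infinity in $\lambda$), and $\norm{g-\sqrt{\abs\lambda}\,e^{-\abs\lambda}}_{\infty,\reals}<\epsilon$ because the map $\lambda\mapsto s$ is a bijection of $\reals$ onto $(0,1]$. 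This settles the uniform statement.

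The slightly more delicate point is the \emph{second} claim, that $g(\lambda)/\sqrt{\abs\lambda}$ approximates $e^{-\abs\lambda}$ in $L^2(\reals)$; here the hard part is controlling the integral near $\lambda=0$, where dividing by $\sqrt{\abs\lambda}$ is singular. The idea is to impose, in addition, that $p$ have no constant \emph{and} no linear term, i.e.\ $p(s)=O(s^2)$ as $s\to0$, which is legitimate because $h(s)=O(s^{?})$ vanishes to sufficiently high order at $s=0$: indeed $h(s)\sim s^{-1/4}e^{-s^{-1/2}}$, which vanishes faster than any power of $s$, so we may as well approximate $h(s)/s$ (again continuous on $[0,1]$, vanishing at $0$) by a polynomial and multiply by $s$, guaranteeing $p(s)=s\cdot(\text{polynomial})$. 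Then near $\lambda=0$ one has $1/(\lambda^2+1)=1-\lambda^2+O(\lambda^4)$, so $p(1/(\lambda^2+1)) = p(1) + p'(1)(-\lambda^2)+O(\lambda^4)$, and since we want smallness of $g(\lambda)/\sqrt{\abs\lambda}-e^{-\abs\lambda}$ we instead argue directly: write
\begin{equation*}
\int_\reals \left\lvert \frac{g(\lambda)}{\sqrt{\abs\lambda}} - e^{-\abs\lambda}\right\rvert^2 d\lambda
= 2\int_0^\infty \frac{1}{\lambda}\,\bigl\lvert g(\lambda) - \sqrt{\lambda}\,e^{-\lambda}\bigr\rvert^2 d\lambda,
\end{equation*}
and split the integral at $\lambda=1$. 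On $[1,\infty)$ the weight $1/\lambda$ is bounded, so uniform closeness of $g$ to $\sqrt\lambda e^{-\lambda}$ plus the fact that both functions decay (polynomial times exponential versus $O(1/(\lambda^2+1))=O(\lambda^{-2})$) makes the tail integrable and small — one should actually fix the tail first by choosing the polynomial degree large, then approximate on the compact part. On $(0,1]$, change variables to $s$: $\lambda=\sqrt{(1-s)/s}$, $d\lambda = -\tfrac12 s^{-3/2}(1-s)^{-1/2}\,ds$, and $1/\lambda = \sqrt{s/(1-s)}$, so the integrand becomes a bounded-times-$\abs{h(s)-p(s)}^2$ expression against the measure $s^{-1}(1-s)^{-1}\,ds$ near $s=1$ (harmless, $\lambda\to0$ corresponds to $s\to1$ where all our functions are smooth) — the genuine singularity at $s=0$ has been removed by forcing $p(s)=O(s)$, matching $h(s)=O(s^\infty)$. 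Thus choosing the polynomial to approximate $h(s)/s$ uniformly on $[0,1]$ makes this piece small as well. Assembling the two estimates gives the $L^2$ bound, completing the proof.
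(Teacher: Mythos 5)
Your uniform-norm argument is fine and is essentially a repackaging of what the paper does (Stone--Weierstrass on the algebra generated by $1/(1+\lambda^2)$); the genuine gap is in the $L^2$ part, where you have mislocated the singularity. In your variable $s=1/(\lambda^2+1)$ the point $\lambda=0$ corresponds to $s=1$, not $s=0$, and the weight transforms as $d\lambda/\lambda = ds/\bigl(2s(1-s)\bigr)$, which is \emph{non-integrable at both endpoints}. Your fix (forcing $p(s)=O(s)$ by approximating $h(s)/s$) correctly tames the endpoint $s=0$, i.e.\ $\lambda\to\infty$; but it does nothing at $s=1$, i.e.\ $\lambda\to 0$, which is exactly where dividing by $\sqrt{\abs{\lambda}}$ hurts. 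Your claim that this region is ``harmless\dots where all our functions are smooth'' is false on two counts: the target $h(s)=\sqrt{\abs{\lambda}}e^{-\abs{\lambda}}\sim (1-s)^{1/4}$ is only H\"older-$\tfrac14$ at $s=1$, and, more importantly, a merely uniform bound $\abs{h(s)-p(s)}\le\epsilon$ there gives the divergent integral $\int^1 \epsilon^2(1-s)^{-1}\,ds$. Attempts to patch this by splitting off a small interval $[1-\delta,1]$ run into a circularity: the admissible $\delta$ depends on the polynomial, while the error on the complementary region then depends on $\delta$.

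What is actually needed is a \emph{pointwise} improvement of the approximation near $\lambda=0$, of the form $\abs{\sqrt{\abs{\lambda}}e^{-\abs{\lambda}}-f(\lambda)}\lesssim \epsilon\,\abs{\lambda}/(1+\lambda^2)$, so that after dividing by $\sqrt{\abs{\lambda}}$ one gets the fixed square-integrable majorant $\epsilon\sqrt{\abs{\lambda}}/(1+\lambda^2)$, handling $\lambda\to0$ and $\lambda\to\infty$ simultaneously. The paper obtains this by first replacing $\sqrt{\abs{t}}$ near $0$ by a $C^1$ regularization $u(t)$ (at cost $O(\sqrt{\epsilon})$ in sup norm and $O(\epsilon)$ in $L^2$), then approximating the \emph{derivative} of $h(t)=u(t)e^{-\abs{t}}(1+t^2)$ uniformly by an odd element $Q_\epsilon$ of the algebra, and integrating: since $Q_\epsilon$ integrates back into the algebra and $\abs{h(t)-\int_0^tQ_\epsilon}\le\epsilon\abs{t}$, the required extra factor of $\abs{t}$ appears for free. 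Some device of this kind (gaining a power of $\abs{\lambda}$ in the error near $0$) is indispensable; without it the second assertion of the lemma does not follow.
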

  \begin{proof}
Consider the subalgebra
   of $C_0 (\RR)$ generated by 
    $w(t):=1/(1+t^2)$ and
   $v(t):=t/(1+t^2)$ which is a  subalgebra that separates
   points. Notice the relation  
   \begin{equation}\label{relation}
   v^2 =w - w^2.
 \end{equation}
 Observe that the image of this subalgebra under the operator of
 differentiation contains for  $n\ge 1$ the function $w^n v$; indeed 
 $w^\prime= -2wv$, so that $(w^n)^\prime= -2n w^n v $ for all $n\geq 1$.
 
 Consider now the subalgebra of $C_0 (\RR)$ generated by $w$ and
 $wv$. This
 subalgebra also separates points; thus, by Stone-Weiestrass, every function
 in $C_0 (\RR)$ can be approximated by 
 a polynomial $P(w,wv)$ with $P(0,0)=0$ (elements in $C_0 (\RR)$ vanish at infinity).
 Because of \eqref{relation} we see that any continuous even function in $C_0 (\RR)$
 can be approximated by a polynomial in $w$ without constant term, whereas any
 continuous 
 odd function in $C_0 (\RR)$ can be approximated by a polynomial of the form
 $P(w)v$, with $P(0)=0$. 
   Notice, finally, that because of the remark following \eqref{relation}
   the antiderivative of a function of the form $P(w)v$, $P(0)=0$ belongs to the algebra generated by $w$ and $v$.   
\noindent
   Given $\epsilon>0$, we apply all this to $h'(t)$, with
   $h(t):=u(t)e^{-\abs{t}}(1+t^2)$ 
   and $u(t):=
   \begin{cases}
     \sqrt{\abs{t}}; & \abs{t}\ge \epsilon \\ \frac{7}{4}\epsilon^{-3/2} t^2-
       \frac{3}{4}\epsilon^{-7/2} t^4; & \abs{t}\le \epsilon
   \end{cases}$. 

\noindent
Note that $\abs{u(t)e^{-\abs{t}} -\sqrt{\abs{t}}e^{-\abs{t}}} \le
    4\sqrt{\epsilon}$. 

As $h$ is even and
   continuously differentiable,  $h'$ is odd and continuous. Because
   of the exponential decay of $e^{-\abs{t}}$, $h'(t)$ vanishes at $\infty$.
  Therefore, we can find an odd function $Q_\epsilon(t)=R(w(t))v(t)$, with $R$
  a polynomial without constant term, such that
  $\abs{h'(t)-Q_\epsilon(t)}<\epsilon$ for all $t\in \reals$.

  Then the even function $P_\epsilon(t):=\int_0^t Q_\epsilon(\tau)\;d\tau$ is a polynomial
without constant term in $w(t)=1/(1+t^2)$ such that $\abs{h(t)-P_\epsilon(t)}\le \epsilon \abs{t}$ for
  all $t\in\reals$, and therefore we have
  
  \begin{equation*}
    \abs{u(t)e^{-\abs{t}} - \frac{P_\epsilon(t)}{ (1+t^2)}} \le
    \epsilon \frac{\abs{t}}{1+t^2}\quad\implies \quad \abs{u(t)e^{-\abs{t}}
      -  \frac{P_\epsilon(t)}{1+t^2}} \le \epsilon\cdot C.
  \end{equation*}
  This shows that $(1+t^2)^{-1}P_\epsilon(t)$ approximates $\sqrt{\abs{t}}
  e^{-\abs{t}}$ in supremum norm.

  Finally,
  \begin{align*}
   \abs{\frac{u(t)}{\sqrt{\abs{t}}}e^{-\abs{t}} -
     \frac{P_\epsilon(t)}{\sqrt{\abs{t}}(1+t^2)} }_{L^2(\reals)} &\le \epsilon
   \left(\int_{-\infty}^\infty \frac{{\abs{t}}}{(1+t^2)^2} \,dt\right)^{1/2} \le
   \epsilon C,\\
\abs{\frac{\abs{u(t)e^{-\abs{t}}}}{\sqrt{\abs{t}}}- e^{-\abs{t}}}_{L^2(\reals)} &\le 
  \abs{\frac{7}{4}\epsilon^{-3/2} t^2 - \frac{3}{4}\epsilon^{-7/4}t^4
    -1}_{L^2([-\epsilon,\epsilon])}\\
 &\le \left( \int_{-\epsilon}^\epsilon
    \abs{\frac{7}{4}\frac{1}{\epsilon^{3/2}}t^2}^2 \,dt\right)^{1/2} + \left(\int_{-\epsilon}^\epsilon
    \abs{\frac{3}{4}\frac{1}{\epsilon^{7/2}}t^4}^2 \,dt\right)^{1/2}   +\left(\int_{-\epsilon}^{\epsilon} 1\,dt\right)^{1/2}
\le C\epsilon
 \end{align*}
with a constant $C$ independent of $\epsilon$. 
  \end{proof}
 
  \begin{definition}
    For $f\colon \reals\to\reals$ continuous and sufficiently decaying at
    $\infty$, define
    \begin{equation*}
V_f\colon L^2(M)\to
    L^2([0,\infty)\times M) \quad\text{ by }\quad V_fu(t):=
    \frac{1}{\sqrt{\abs{t}}}     f(t\abs{D+E}) 
      u,
    \end{equation*}
and correspondingly $V_{f,D}$ with $D+E$ replaced by $D$. 
  \end{definition}

  \begin{lemma}
    In the situation above, $\max\{\norm{V_D-V_{f,D}}^2,\norm{V-V_f}^2\} \le
    \int_0^{\infty} 
    \abs{\frac{f(t)}{\sqrt{t}}-e^{-t}}^2\,dt$
    .
  \end{lemma}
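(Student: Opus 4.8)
The plan is to observe that $V-V_f$ and $V_D-V_{f,D}$ both have exactly the form to which Lemma~\ref{lem:opnorm} applies, and then to exploit the fact that the integral produced by that lemma is invariant under rescaling of the spectral parameter. Concretely, for the first difference one writes $(V-V_f)u(t)=g_t(D+E)\,u$ with
\begin{equation*}
  g_t(\lambda):=\sqrt{2\abs{\lambda}}\,e^{-t\abs{\lambda}}-\frac{1}{\sqrt{\abs{t}}}\,f(t\abs{\lambda}),
\end{equation*}
and for the second difference the same formula with $E$ replaced by $0$. First I would check that $t\mapsto g_t$ is a measurable map $[0,\infty)\to C_b(\reals)$: the only delicate point is the behaviour near $t=0$, where $\abs{t}^{-1/2}f(t\abs{\lambda})$ could a priori blow up, but the construction in Lemma~\ref{lem:absapp} produces an $f$ which vanishes to second order at $0$ (indeed the $L^2$-statement of that lemma would fail otherwise), so $\abs{t}^{-1/2}f(t\abs{\lambda})$ stays bounded uniformly in $\lambda$ as $t\to 0$.

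Next I would apply Lemma~\ref{lem:opnorm} with $A=D+E$ (respectively $A=D$), obtaining $\norm{V-V_f}^2\le\sup_{\lambda\in\spec(D+E)}\int_0^\infty\abs{g_t(\lambda)}^2\,dt$ and the analogous bound for $V_D-V_{f,D}$. The key step is then the change of variables $s=t\abs{\lambda}$ in this integral for $\lambda\neq 0$: since $\sqrt{\abs{t}}=\sqrt{s}/\sqrt{\abs{\lambda}}$ and $dt=ds/\abs{\lambda}$, the factor $\abs{\lambda}$ cancels completely and $\int_0^\infty\abs{g_t(\lambda)}^2\,dt$ becomes the fixed integral $\int_0^\infty\abs{f(t)/\sqrt{t}-e^{-t}}^2\,dt$ of the statement, independent of $\lambda$ (the $\sqrt{2}$-normalisation carried by $V$ being precisely the one built into the approximants $f$ of Lemma~\ref{lem:absapp}). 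The spectral value $\lambda=0$ causes no harm: for $A=D+E$ it is excluded outright since $D+E$ is invertible, while for $A=D$ one uses that $\sqrt{2\abs{D}}e^{-t\abs{D}}$ and, because $f(0)=0$, also $\abs{t}^{-1/2}f(t\abs{D})$ annihilate $\ker D$, so $\lambda=0$ contributes nothing to the supremum. Taking the supremum over the spectrum therefore reproduces exactly the stated bound in both cases.

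I expect no serious obstacle here: the content is a single functional-calculus identification followed by the scaling substitution, and everything else is bookkeeping — verifying the hypotheses of Lemma~\ref{lem:opnorm} near $t=0$ (controlled by the finiteness of $\int_0^\infty\abs{f(t)/\sqrt{t}}^2\,dt$ from Lemma~\ref{lem:absapp}) and disposing of the spectral value $0$ in the unperturbed case. In particular, in contrast with Lemma~\ref{lem:awaysmall}, invertibility of $D+E$ is not needed to obtain this comparison estimate; it is only used to identify the relevant part of the spectrum. The estimate will then be combined with the already-established properties of $V_D$ from \cite{PS-Stolz} and with Lemma~\ref{lem:absapp} (choosing $f$ so that the right-hand integral is arbitrarily small) to complete the proof of Proposition~\ref{prop:v-covers}.
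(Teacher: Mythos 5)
Your proof is correct and follows essentially the same route as the paper: represent the difference as $g_t(A)$ via functional calculus, invoke Lemma \ref{lem:opnorm}, and perform the substitution $s=t\abs{\lambda}$ to make the integral independent of $\lambda$. Your extra care about the spectral value $\lambda=0$ for the (possibly non-invertible) operator $D$, using $f(0)=0$ from the construction in Lemma \ref{lem:absapp}, is a point the paper's proof silently glosses over, and it is handled correctly.
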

  \begin{proof}
    By definition, $(V-V_f)u (t) = g_t(D+E) u$ with $g_t(\lambda) =
    \frac{\sqrt{2\abs{t\lambda}}e^{-t\abs{\lambda}}-f(t\abs{\lambda})}{\sqrt{{t}}}$. Then
    \begin{equation*}
      \int_0^{\infty} \abs{g_t(\lambda)}^2\;dt = \int_0^\infty
      \abs{\frac{1}{\sqrt{t}} 
    \left( \sqrt{2\abs{t\lambda}}         e^{-t\abs{\lambda}} - f(t\abs{\lambda})\right)}^2\,dt \stackrel{s=t\abs{\lambda}}{=}
  \int_0^{\infty}  \frac{1}{s}\abs{
    \sqrt{s} e^{-s} - f(s)}^2\,ds .
    \end{equation*}
  By Lemma \ref{lem:opnorm}, $\max\{\norm{V_D-V_{f,D}}^2,\norm{V-V_f}^2\} \le       \int_0^{\infty}
  \abs{g_t(\lambda)}^2\, dt   = \int_0^{\infty}  \abs{
     e^{-t} - \frac{f(t)}{\sqrt{t}}}^2\,dt$, as claimed.
 \end{proof}

Because of this, in light of Lemma \ref{lem:opnorm} we can find a polynomial
$f(t)$ in $(1+t^2)^{-1}$, vanishing at $\infty$ such that $V-V_f$ and
$V_D-V_{f,D}$ are arbitrarily small in norm. The same applies then of course
also to $(1-P_R) (V-V_f)$ and $(1-P_R)(V_D-V_{f,D})$.

\begin{lemma}\label{lem:decomp}
  For a fixed polynomial $f(t)$ in $(1+t^2)^{-1}$, $V_f-V_{f,D}$ is a linear
  combination of operators of the form
  \begin{equation}\label{eq:specform}
    \begin{split}
      & u\mapsto \left(t\mapsto \frac{1}{\sqrt{\abs{t}}} \frac{t(D+E)}{(1+t^2(D+E)^2)^l} tE
      \frac{1}{(1+t^2D^2)^k}u \right)\\
     & u\mapsto \left( t\mapsto \frac{1}{\sqrt{\abs{t}}} \frac{1}{(1+t^2(D+E)^2)^l} tE
      \frac{tD}{(1+t^2D^2)^k} u\right)
    \end{split}
  \end{equation}
  with $k, l\ge 1$.
\end{lemma}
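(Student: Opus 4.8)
The plan is to reduce the assertion to an elementary algebraic manipulation of resolvents. Since by hypothesis $f$ is a polynomial in $(1+t^2)^{-1}$ with vanishing constant term, write $f(s)=\sum_{k=1}^{N}a_k(1+s^2)^{-k}$. Because $|D+E|^2=(D+E)^2$ and $|D|^2=D^2$, the functional calculus gives $f(t|D+E|)=\sum_k a_k\,(1+t^2(D+E)^2)^{-k}$ and $f(t|D|)=\sum_k a_k\,(1+t^2D^2)^{-k}$, so that
\[
(V_f-V_{f,D})u(t)=\frac{1}{\sqrt{\abs{t}}}\sum_{k=1}^{N}a_k\bigl[(1+t^2(D+E)^2)^{-k}-(1+t^2D^2)^{-k}\bigr]u .
\]
By linearity it is then enough to treat a single difference $A^{-k}-B^{-k}$, with $A:=1+t^2(D+E)^2$ and $B:=1+t^2D^2$.

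First I would telescope $A^{-k}-B^{-k}=\sum_{j=1}^{k}A^{-(j-1)}(A^{-1}-B^{-1})B^{-(k-j)}$, reducing to the single-step difference, and then use $A^{-1}-B^{-1}=A^{-1}(B-A)B^{-1}$ together with $B-A=t^2D^2-t^2(D+E)^2=-t^2(DE+ED+E^2)$. The one genuinely chosen step is the regrouping
\[
DE+ED+E^2=(D+E)E+ED ,
\]
made precisely so that in each of the two resulting products exactly one factor $E$ stays isolated while the neighbouring factor is of Dirac type and can be absorbed into the adjacent resolvent; note that the alternative grouping $DE+E(D+E)$ would force the rewriting $A^{-1}D=(D+E)A^{-1}-A^{-1}E$ and thereby create an unwanted $E^2$-term. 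Using that $A^{-1}$ commutes with $D+E$ and $B^{-1}$ commutes with $D$, one then obtains
\[
-t^2A^{-1}(D+E)EB^{-1}=-\frac{t(D+E)}{1+t^2(D+E)^2}\,(tE)\,\frac{1}{1+t^2D^2},
\]
\[
-t^2A^{-1}EDB^{-1}=-\frac{1}{1+t^2(D+E)^2}\,(tE)\,\frac{tD}{1+t^2D^2},
\]
so that $\tfrac{1}{\sqrt{\abs{t}}}(A^{-1}-B^{-1})$ is already a linear combination of the two model operators of \eqref{eq:specform} with $l=k=1$.

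Finally I would push the outer factors $A^{-(j-1)}$ and $B^{-(k-j)}$ back into these expressions: since $A^{-(j-1)}$ commutes with $D+E$ it merely raises the exponent of $(1+t^2(D+E)^2)^{-1}$ from $1$ to $j$, and since $B^{-(k-j)}$ commutes with $D$ it raises the exponent of $(1+t^2D^2)^{-1}$ to $k-j+1$, the isolated middle factor $tE$ being untouched; all exponents remain $\ge 1$ because $1\le j\le k$. Summing over $j$, then over $k$ with the coefficients $a_k$, exhibits $V_f-V_{f,D}$ as a linear combination of operators of the form \eqref{eq:specform}. I do not expect a real obstacle here: the only point requiring care is the bookkeeping of the powers of $t$ — one must check that each occurrence of $E$ is flanked by exactly one factor $t$ on either side, producing the $tE$ of the model operators, while the remaining powers of $t$ are exactly those combining with $D$ or $D+E$ into $tD$ and $t(D+E)$ — together with the observation above that the regrouping must be performed so as never to generate an $E^2$-factor.
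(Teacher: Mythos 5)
Your proposal is correct and follows essentially the same route as the paper's proof: telescoping $a^k-b^k=\sum_j a^{j-1}(a-b)b^{k-j}$ for the resolvent powers, the identity $a-b=a(b^{-1}-a^{-1})b$, and the same regrouping $1+t^2D^2-1-t^2(D+E)^2=-t(D+E)\,tE-tE\,tD$, after which the outer resolvent factors are absorbed by commutation. Your write-up merely makes explicit the bookkeeping that the paper compresses into ``application of the formulas just derived immediately gives the result.''
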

\begin{proof}
  First, expand $a^n-b^n = \sum_{k=1}^n a^{k-1}(a-b) b^{n-k}$. Apply this to
  $a=(1+t^2(D+E)^2)^{-1}$ and $b=(1+t^2D^2)^{-1}$. Then
  \begin{equation*}
    a-b = \frac{1}{1+t^2(D+E)^2}\left(1+t^2D^2- 1-t^2(D+E)^2 \right)  \frac{1}{1+t^2D^2},
  \end{equation*}
  and $1+t^2D^2-1-t^2(D+E)^2 = -t(D+E) tE -tE tD $.

  Recall that for $g(t)=\frac{1}{(1+t^2)^n}$ we have
 $\left((V_g-V_{g,D})u\right)(t) = 
  \frac{1}{\sqrt{\abs{t}}} \left(
    \left(\frac{1}{1+t^2(D+E)^2} \right)^n   - \left(\frac{1}{1+D^2}\right)^n  \right)u$. Application of the
  formulas just derived immediately gives the result.
\end{proof}

\begin{lemma}\label{lem:near_midterm}
  If $f(t)$ is a polynomial in $(1+t^2)^{-1}$ then
  \begin{equation*}
    \lim_{r\to 0} \norm{(1-P_r) (V_f-V_{f,D})} = 0.
  \end{equation*}
\end{lemma}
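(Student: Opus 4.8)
The plan is to reduce, via Lemma \ref{lem:decomp}, to the two model operators displayed in \eqref{eq:specform}, and to bound $(1-P_r)$ applied to each of them by a quantity which is $O(r)$ as $r\to 0$. Since $V_f - V_{f,D}$ is a finite linear combination of such model operators, and $1-P_r$ is linear, it suffices to treat one model operator $W$ at a time.

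First I would record the elementary operator-norm bound appropriate here, which is the variant of Lemma \ref{lem:opnorm} for families that are not functions of a single self-adjoint operator: if $W\colon \L^2(M)\to \L^2([0,\infty)\times M)$ has the form $(Wu)(t) = h_t u$ for a measurable family $t\mapsto h_t$ of bounded operators on $\L^2(M)$, then
\begin{equation*}
\norm{Wu}^2 = \int_0^\infty \norm{h_t u}^2\,dt \le \left(\int_0^\infty \norm{h_t}^2\,dt\right)\norm{u}^2,
\end{equation*}
so $\norm{W}^2 \le \int_0^\infty \norm{h_t}^2\,dt$. Since $1-P_r$ is the orthogonal projection onto $\L^2([0,r)\times M)$, composing with it just truncates the $t$-integral, so $\norm{(1-P_r)W}^2 \le \int_0^r \norm{h_t}^2\,dt$. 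Thus everything comes down to showing that the integrand $\norm{h_t}^2$ is integrable near $0$ with the right decay.

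Next I would estimate $\norm{h_t}$ for the two model operators. By the functional calculus for the self-adjoint operators $D$ and $D+E$, each of the factors $\frac{t(D+E)}{(1+t^2(D+E)^2)^l}$, $\frac{1}{(1+t^2(D+E)^2)^l}$, $\frac{tD}{(1+t^2D^2)^k}$ and $\frac{1}{(1+t^2D^2)^k}$ has operator norm bounded by a constant $C_0$ \emph{independent of} $t$, because after the substitution $s=t\lambda$ one is taking $\sup_{s\in\reals}\abs{s/(1+s^2)^m}$ or $\sup_{s\in\reals}\abs{1/(1+s^2)^m}$, both finite for every $m\ge 1$. The only factor producing $t$-growth is $tE$, with $\norm{tE}\le \abs{t}\,\norm{E}$. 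Hence for either model operator
\begin{equation*}
\norm{h_t} \le \frac{1}{\sqrt{\abs{t}}}\cdot C_0 \cdot \abs{t}\norm{E}\cdot C_0 = C\sqrt{\abs{t}},
\end{equation*}
and therefore $\norm{(1-P_r)W}^2 \le \int_0^r C^2 t\,dt = C^2 r^2/2 \to 0$ as $r\to 0$. Summing over the finitely many terms from Lemma \ref{lem:decomp} completes the proof.

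I do not expect a serious obstacle here: the purpose of the algebraic rearrangement in Lemma \ref{lem:decomp} was precisely to isolate a factor of $E$, which brings in a factor of $t$ and so turns the dangerous $t^{-1/2}$ prefactor into a harmless $t^{1/2}$, making the near-zero estimate immediate. The only point requiring a little care is using the correct (not the single-self-adjoint-operator) form of the operator-norm bound, since the model operators mix $D$ and $D+E$, which do not commute, so Lemma \ref{lem:opnorm} does not apply verbatim and one uses the cruder pointwise-in-$t$ bound above.
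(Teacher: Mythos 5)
Your proof is correct and follows the same route as the paper: reduce via Lemma \ref{lem:decomp} to the operators in \eqref{eq:specform}, observe that the factor $t$ accompanying $E$ turns the $t^{-1/2}$ prefactor into $\sqrt{t}$ times a uniformly bounded family $A(t)$, and conclude from $\int_0^r t\,\norm{A(t)}^2\,dt \le C r^2/2 \to 0$. Your explicit remark that one uses the crude bound $\norm{(1-P_r)W}^2\le\int_0^r\norm{h_t}^2\,dt$ rather than Lemma \ref{lem:opnorm} (since $D$ and $D+E$ need not commute) is exactly the point the paper leaves implicit.
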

\begin{proof}
  Because of Lemma \ref{lem:decomp} it suffices to consider the operators
  given in \eqref{eq:specform}. Because of the factor $t$ in front of $tE$
  which is always present, they are of the form
  \begin{equation*}
    u\mapsto \left(t\mapsto \sqrt{t} A(t)u\right)
  \end{equation*}
  where $A(t)$ is a uniformly norm bounded family of operators: it is a composition
  of $E$ and of uniformly in $t$ operator norm bounded functions of $(D+E)$
  and of
  $D$. The statement follows immediately from the definition of the norm on
  $L^2([0,\infty),L^2(M))$. 
\end{proof}

\begin{lemma}\label{lem:prop_midterm}
  If $f(t)$ is a polynomial in $(1+t^2)^{-1}$ and  $0<r<R<\infty$ then
  $(P_r-P_R) (V_f-V_{f,D})$ is a norm limit of operators of finite
  propagation. 
\end{lemma}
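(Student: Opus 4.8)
The plan is to reduce to the summands produced by Lemma \ref{lem:decomp} and to approximate each of their factors by finite propagation operators, uniformly for $t$ in the compact interval $[r,R]$; the restriction of $t$ to this interval is precisely what the projection $P_r-P_R$ provides, and it is what makes all the estimates below uniform.

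First I would recall the elementary building block. Since $D$ is a Dirac type operator with product structure near the boundary, its wave operator $e^{i\xi D}$ has unit propagation speed (this is already used in the proof of Lemma \ref{lem:fD_loc}), so for $g\in C_0(\reals)$ with $\widehat g\in L^1(\reals)$ one has $g(D)=\tfrac{1}{2\pi}\int\widehat g(\xi)e^{i\xi D}\,d\xi$, and truncating the integral to $|\xi|\le L$ gives an operator of propagation $\le L$ converging to $g(D)$ in norm, with error $\tfrac{1}{2\pi}\int_{|\xi|>L}|\widehat g(\xi)|\,d\xi$. Applied to the factors $g_t(x)=(1+t^2x^2)^{-k}$ and $g_t(x)=tx(1+t^2x^2)^{-k}$ ($k\ge 1$) of \eqref{eq:specform}, the scaling $g_t(x)=t^{-1}\psi(tx)$ for a fixed $\psi$ with $\widehat\psi\in L^1$ gives $\widehat{g_t}(\xi)=t^{-2}\widehat\psi(\xi/t)$, hence $\int_{|\xi|>L}|\widehat{g_t}|\,d\xi\le r^{-1}\int_{|\eta|>L/R}|\widehat\psi|\,d\eta\to 0$ uniformly in $t\in[r,R]$ as $L\to\infty$. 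So each factor $g_t(D)$, and also the operator $E$ (a norm limit of finite propagation operators by hypothesis), is a uniform-in-$t$ norm limit of finite propagation operators, while $tE$ is uniformly bounded on $[r,R]$.

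The only non-immediate factors are $h_t(D+E)$ with $h_t(x)=(1+t^2x^2)^{-l}$ or $tx(1+t^2x^2)^{-l}$, since $e^{i\xi(D+E)}$ need not have finite propagation. I would again write $h_t(D+E)=\tfrac{1}{2\pi}\int\widehat{h_t}(\xi)e^{i\xi(D+E)}\,d\xi$ (the $\widehat{h_t}$ being $L^1$ with the same uniform tail bound on $[r,R]$), truncate to $|\xi|\le L$, and expand each exponential by the Duhamel/Volterra series
\begin{equation*}
e^{i\xi(D+E)}=\sum_{n\ge 0} i^{n}\int_{0\le\sigma_1\le\dots\le\sigma_n\le\xi}e^{i(\xi-\sigma_n)D}\,E\,e^{i(\sigma_n-\sigma_{n-1})D}\cdots E\,e^{i\sigma_1 D}\;d\sigma ,
\end{equation*}
whose $n$-th term has norm $\le|\xi|^{n}\|E\|^{n}/n!$, so the series converges in norm and its tail past order $N$ is $\le\sum_{n>N}L^{n}\|E\|^{n}/n!$, uniformly for $|\xi|\le L$ and $t\in[r,R]$. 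Truncating the series at order $N$ and replacing every $E$ by a finite propagation operator $E_0$ (the error in the truncated expression being bounded by a constant depending only on $L,N,\|E\|$ times $\|E-E_0\|$) produces an operator of propagation $\le L+N\rho_0$, where $\rho_0$ is the propagation of $E_0$, because the wave-operator exponents sum to $|\xi|\le L$ and there are at most $N$ insertions of $E_0$. Letting $\|E-E_0\|\to 0$, then $N\to\infty$, then $L\to\infty$, exhibits $h_t(D+E)$ as a uniform-in-$t\in[r,R]$ norm limit of finite propagation operators.

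Finally I would assemble: each summand of $(P_r-P_R)(V_f-V_{f,D})$ is the scalar multiplier $\mathbf 1_{[r,R]}(t)/\sqrt{|t|}$ composed with a fibrewise (i.e.\ $t$-by-$t$) product of three factors, each a uniform-in-$t\in[r,R]$ norm limit of finite propagation operators on $\L^2(\widetilde M)$; since a product of finite propagation operators has finite propagation and the output is supported in $[r,R]\times\widetilde M$, a diagonal choice of approximants gives a norm limit of finite propagation operators in the sense relevant to the $D^*$-picture (with the product metric on $\RR_{\geq}\times\widetilde M$). The step I expect to be the main obstacle is the treatment of $e^{i\xi(D+E)}$ through the Duhamel expansion together with the bookkeeping of the three nested limits so that all estimates stay uniform over $t\in[r,R]$; the rest is the standard Fourier/unit-propagation-speed argument.
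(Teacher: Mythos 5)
Your proof is correct, but it takes a genuinely different route from the one in the paper. The paper's proof is a short compactness argument: each summand from Lemma \ref{lem:decomp} composed with $P_r-P_R$ has the form $u\mapsto(t\mapsto A(t)u)$ with $A(t)=\phi_t(D+E)\,E\,\psi_t(D)$ for $t\in[r,R]$ and $0$ otherwise; since $\phi_t,\psi_t\in C_0(\reals)$, each factor is \emph{already known} to be a norm limit of equivariant finite propagation operators (this is Lemma \ref{lem:ancon}, i.e.\ the Higson--Roe resolvent argument of Proposition \ref{prop:hr-controlled-sharp} applied to $D+E$), and $t\mapsto A(t)$ is norm continuous; compactness of $[r,R]$ then allows a piecewise constant approximation $B(t)$ with a single finite propagation bound $S$, giving total propagation $\le\max\{R,S\}$. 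You instead reprove the key input --- that $h_t(D+E)$ is a norm limit of finite propagation operators, uniformly in $t\in[r,R]$ --- from scratch, via the Fourier integral over the perturbed wave operator and its Duhamel/Volterra expansion, getting uniformity in $t$ from explicit scaling of $\widehat{h_t}$ rather than from continuity plus compactness. Your nested truncations (Fourier cutoff at $L$, series order $N$, finite propagation approximant $E_0$) are organized correctly and the propagation bound $L+N\rho_0$ is right; the only slips are cosmetic (the normalization $g_t(x)=\psi(tx)$ carries no factor $t^{-1}$, and unit propagation speed here needs only completeness of $\widetilde M$, not a product structure near a boundary). It is worth noting that the authors explicitly state, at the start of the proof of the cylinder theorem, that they avoid the Volterra expansion precisely because ``doing this properly does require some non-trivial work and, in addition, it would not generalize easily to Lipschitz manifolds''; your argument is that avoided route, carried out properly. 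What you gain is a self-contained, quantitative construction of the approximants; what the paper's route buys is brevity, reuse of Proposition \ref{prop:hr-controlled-sharp}, and robustness beyond the smooth category.
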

\begin{proof}
  By Lemma \ref{lem:decomp}, it suffices {to compose}
  the operators in \eqref{eq:specform}
  with $(P_r-P_R)$ and prove the statement for these compositions.
  Now observe that these operators have the form $u\mapsto \left(t\mapsto
    A(t)u \right)$
  where $A(t)=
  \begin{cases}
    0; &  t>R,t<r\\
    \phi_t(D+E) E \psi_t(D); &r\le t\le R
  \end{cases}$.
Then $A(t) $ is a norm continuous function with values in operators which
  are norm limits of finite propagation operators: indeed, $\phi_t(\lambda),
  \psi_t(\lambda)$ tend to $0$ for $\lambda\to\pm\infty$ and depend continuously
  in supremum norm on $t$, so that $\phi_t(D+E)$, $\psi_t(D)$ are really
  limits of   finite propagation operators and depend norm continuously on
  $t$. Therefore we can ---up to an
  arbitrarily small error in norm--- replace the function $A(t)$ by a
  (say~piecewise constant) function $B(t)$ of operators with fixed finite
  propagation $S$. It follows that $u\mapsto \left(t\mapsto
    B(t)u\right)$ has finite  propagation at most $\max\{R,S\}$. 
  
\end{proof}

\begin{lemma}\label{lem:comp_midterm}
  If $f(t)$ is a polynomial in $(1+t^2)^{-1}$ and  $0<r<R<\infty$ then
  $(P_r-P_R) (V_f-V_{f,D})= B (i+rD)^{-1}$ for a bounded operator $B$
\end{lemma}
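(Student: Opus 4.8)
The plan is to reduce, via Lemma~\ref{lem:decomp}, to an elementary functional-calculus bound for the self-adjoint operators $D$ and $D+E$. Since $D$ is self-adjoint, $i+rD$ is invertible with $\norm{(i+rD)^{-1}}\le 1$, so it suffices to produce a \emph{bounded} operator $B$ with $(P_r-P_R)(V_f-V_{f,D})=B(i+rD)^{-1}$, and the natural candidate is $B:=(P_r-P_R)(V_f-V_{f,D})(i+rD)$. By Lemma~\ref{lem:decomp}, $V_f-V_{f,D}$ is a finite linear combination of operators of the two shapes in \eqref{eq:specform}; composing with $P_r-P_R$ merely restricts the outer variable $t$ to the compact interval $[r,R]$, and it is each such restricted summand, composed on the right with $i+rD$, that I would bound.

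First I would collect the pointwise (in $t\ge r$) estimates from the spectral theorem: $\norm{(1+t^2D^2)^{-1}}\le 1$, $\norm{t^2D^2(1+t^2D^2)^{-1}}\le 1$, $\norm{t(D+E)(1+t^2(D+E)^2)^{-1}}\le\frac12$, $\norm{(1+t^2(D+E)^2)^{-1}}\le 1$, and the key inequality $\norm{(i+rD)(1+t^2D^2)^{-1}}\le\frac32$ for $t\ge r$ (using $\abs{r\lambda}\le\abs{t\lambda}$, hence $\abs{r\lambda}/(1+t^2\lambda^2)\le\frac12$). Pushing the factor $i+rD$ to the right through the trailing power $(1+t^2D^2)^{-k}$, $k\ge1$, one writes $(1+t^2D^2)^{-k}(i+rD)=(1+t^2D^2)^{-(k-1)}\cdot(i+rD)(1+t^2D^2)^{-1}$, of norm $\le\frac32$ uniformly in $t\ge r$; through $tD(1+t^2D^2)^{-k}$ one splits off $i\,tD(1+t^2D^2)^{-k}$ (norm $\le\frac12$) and $\tfrac rt\,t^2D^2(1+t^2D^2)^{-k}$ (norm $\le1$, since $\tfrac rt\le1$ for $t\ge r$), again uniformly bounded.

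Consequently each summand of $(P_r-P_R)(V_f-V_{f,D})(i+rD)$ has the form $u\mapsto\bigl(t\mapsto\chi_{[r,R]}(t)\,\abs t^{-1/2}A(t)u\bigr)$, where $A(t)$ is a product of a uniformly (in $t\ge r$) norm-bounded function of $D+E$, the operator $tE$ (of norm $\le R\norm E$ on $[r,R]$), and a uniformly norm-bounded function of $D$; hence $\sup_{t\in[r,R]}\norm{A(t)}\le C<\infty$ with $C$ depending only on $f,r,R,\norm E$. A direct estimate then gives, for $u\in\L^2(\widetilde M)$, $\norm{Bu}^2=\int_r^R\norm{\abs t^{-1/2}A(t)u}^2\,dt\le\bigl(\int_r^R t^{-1}\,dt\bigr)\,C^2\norm u^2<\infty$, so $B$ is bounded and $(P_r-P_R)(V_f-V_{f,D})=B(i+rD)^{-1}$ as claimed. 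The only slightly delicate point — the main, and minor, obstacle — is the bookkeeping when commuting $i+rD$ past the powers $(1+t^2D^2)^{-k}$ and verifying that every scalar symbol that appears remains bounded uniformly for $t\in[r,R]$; using $\abs{r\lambda}\le\abs{t\lambda}$ for $t\ge r$ systematically, all of this collapses to the elementary bounds $s/(1+s)\le1$ and $1/(1+s)\le1$ for $s\ge0$.
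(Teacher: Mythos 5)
Your argument is correct and is essentially the paper's own proof: both reduce via Lemma~\ref{lem:decomp} to the operators in \eqref{eq:specform} composed on the right with $i+rD$, and both rest on the uniform (in $t\ge r$) boundedness of the symbols $\lambda\mapsto r\lambda/(1+t^2\lambda^2)$ and $\lambda\mapsto t\lambda\, r\lambda/(1+t^2\lambda^2)$ together with the compactness of the interval $[r,R]$. You merely make the constants and the commutation of $i+rD$ with the functional calculus of $D$ explicit, which the paper leaves implicit.
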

\begin{proof}
  Because of Lemma \ref{lem:decomp}, we have to show that
  \begin{equation*}
    u\mapsto \left(t\mapsto
      \begin{cases}
        0; & t>R \text{ or }t<r\\
        \frac{(t(D+E))^\eta}{(1+t^2(D+E)^2)^k} \sqrt{\abs{t}} E
        \frac{(tD)^{1-\eta} (i+rD)}{(1+t^2D^2)^l} u; & r\le t\le R
      \end{cases}\right)
  \end{equation*}
  with $k,l\ge 1$ and $\eta\in \{0,1\}$ is bounded. Using that $\eta=0$ or
  $\eta=1$ and $l\ge 1$ this follows from the fact that $\lambda\mapsto
  \frac{r\lambda}{1+t^2\lambda^2}$ and $\lambda\mapsto \frac{t\lambda
    r\lambda}{1+t^2\lambda^2}$ are uniformly (in $t\ge r$) bounded functions of
  $\lambda$ (note that we substitute $D$ into these functions to obtain one
  factor making up the operator we have to consider, the remaining factors
  being controlled by the previous considerations). 
\end{proof}

\begin{theorem}
  $V\colon L^2(M)\to L^2([0,\infty)\times M)$ covers the inclusion
  $\{0\}\times M\to
  [0,\infty)\times M$ in the $D^*$ sense.
\end{theorem}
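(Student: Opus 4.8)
The plan is to obtain the theorem by assembling the estimates of Lemmas \ref{lem:awaysmall}, \ref{lem:absapp}, \ref{lem:near_midterm}, \ref{lem:prop_midterm} and \ref{lem:comp_midterm}, organizing the reduction so that a ``vanishing at infinity'' estimate is invoked only where one is genuinely available. I will use that the bounded $\Gamma$-equivariant operators $L^2(M)\to L^2([0,\infty)\times M)$ covering the inclusion $\{0\}\times M\hookrightarrow[0,\infty)\times M$ in the $D^*$-sense form a norm-closed linear subspace which is stable under left multiplication by multipliers of $C^*([0,\infty)\times M)^\Gamma$; since $1-P_R$ is a local operator of propagation zero it is such a multiplier. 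By Lemma \ref{lem:awaysmall} one has $V=\lim_{R\to\infty}(1-P_R)V$ in norm, and this is the one place where the invertibility of $D+E$ will be used; it therefore suffices to show that $(1-P_R)V$ covers the inclusion for each fixed $R>0$. Since $V_D$ covers the inclusion by the argument of \cite{PS-Stolz} (carried out there for an invertible operator, but applicable verbatim), so does $(1-P_R)V_D$, and it is then enough to prove that $(1-P_R)(V-V_D)$ covers the inclusion for each $R>0$.

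Next I will reduce to the comparison terms $V_f-V_{f,D}$. By Lemma \ref{lem:absapp} — which in particular forces $f(0)=0$, so that $V_f$ and $V_{f,D}$ are bounded — together with the norm comparison lemma preceding Lemma \ref{lem:decomp}, for every $\epsilon>0$ there is a polynomial $f$ in $(1+t^2)^{-1}$, vanishing at infinity, with $\norm{V-V_f}<\epsilon$ and $\norm{V_D-V_{f,D}}<\epsilon$ \emph{simultaneously}. Hence $(1-P_R)(V-V_D)$ is a norm limit of operators $(1-P_R)(V_f-V_{f,D})$, and it is enough to treat $(1-P_R)(V_f-V_{f,D})$ for a fixed such $f$ and a fixed $R>0$. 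For $0<r<R$ I write
\begin{equation*}
(1-P_R)(V_f-V_{f,D}) = (1-P_r)(V_f-V_{f,D}) + (P_r-P_R)(V_f-V_{f,D}).
\end{equation*}
By Lemma \ref{lem:near_midterm} the first summand tends to $0$ in norm as $r\to 0^+$, so $(1-P_R)(V_f-V_{f,D})$ is a norm limit of the operators $(P_r-P_R)(V_f-V_{f,D})$, and it remains to prove that each of these covers the inclusion.

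Finally, fix $0<r<R$. By Lemma \ref{lem:comp_midterm} one may write $(P_r-P_R)(V_f-V_{f,D})=B(i+rD)^{-1}$ with $B$ bounded; since $D$ is elliptic on the cocompact covering $M$, the operator $(i+rD)^{-1}\psi$ is compact for every $\psi\in C_c(M)$, whence $\phi\,(P_r-P_R)(V_f-V_{f,D})\,\psi=\phi B\,(i+rD)^{-1}\psi$ is compact for all $\phi\in C_0([0,\infty)\times M)$ and $\psi\in C_c(M)$. Combined with Lemma \ref{lem:prop_midterm}, which exhibits $(P_r-P_R)(V_f-V_{f,D})$ as a norm limit of $\Gamma$-equivariant operators of finite propagation — which, from their explicit form, may be taken locally compact — this shows that $(P_r-P_R)(V_f-V_{f,D})$ covers the inclusion in the $D^*$-sense, and tracing the chain of reductions back up gives the statement. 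The point that needs care, and the reason to keep the cut-off $1-P_R$ on \emph{every} difference term instead of working with the global difference $V-V_D$, is that, unlike in the positive scalar curvature setting of \cite{PS-Stolz}, the operator $D$ need not be invertible, so that $V_D$ and $V_{f,D}$ degenerate on the $L^2$-kernel of $D$ and the naive ``far end is small'' estimate genuinely fails for the differences; invertibility is available only for $D+E$, which is precisely what legitimizes the single application of Lemma \ref{lem:awaysmall}.
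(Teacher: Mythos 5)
Your argument follows the same route as the paper's proof: the same splitting of $V$ into $P_RV$, $(1-P_R)(V-V_f)$, $(1-P_r)(V_f-V_{f,D})$, $(P_r-P_R)(V_f-V_{f,D})$, $(1-P_R)(V_{f,D}-V_D)$ and $(1-P_R)V_D$, each handled by exactly the lemma you cite. Organizing this as a chain of nested norm limits rather than as a single six-term decomposition with small error terms is purely cosmetic, and your treatment of the middle term (finite propagation from Lemma \ref{lem:prop_midterm}, the compactness condition from Lemma \ref{lem:comp_midterm} via the factorization $B(i+rD)^{-1}$) is the paper's.

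The one claim you should not make is that ``$V_D$ covers the inclusion by the argument of \cite{PS-Stolz} \dots applicable verbatim.'' Here $D$ need not be invertible, and if $0$ is an accumulation point of $\spec(D)$ then $V_D$ does \emph{not} cover the inclusion: an operator of propagation $S$ relative to the inclusion satisfies $P_TG=0$ for $T>S$, whereas the spectral computation underlying Lemma \ref{lem:opnorm} gives $\norm{P_TV_D}\geq e^{-T\abs{\lambda}}$ for every $\lambda\in\spec(D)$, hence $\norm{P_TV_D}=1$ for all $T$; so $V_D$ stays at distance $1$ from every finite propagation operator. (Your own closing remark --- that the ``far end is small'' estimate genuinely fails without invertibility --- is exactly this point, so the two passages of your write-up contradict one another.) What is true, and all you actually use, is the weaker statement that $(1-P_R)V_D$ is a norm limit of finite propagation pseudolocal operators; this is precisely what the paper extracts from \cite{PS-Stolz}, observing that \emph{that} part of the argument does not use invertibility of $D$. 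With your step for $V_D$ rephrased in this weaker form, the proof is complete and agrees with the paper's.
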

\begin{proof}
  Choose a polynomial $f(t)$
  in $(1+t^2)^{-1}$ as above such that $(1-P_R)(V-V_f)$ and
  $(1-P_R)(V_{f,D}-V_D)$ have small norm for all $R>0$.
  We write
  \begin{equation*}
    V= P_R V + (1-P_R)(V - V_f) + (1-P_r) (V_f-V_{f,D}) +
    (P_r-P_R)(V_f-V_{f,D}) + (1-P_R) (V_{f,D}-V_D) + (1-P_R) V_D.
  \end{equation*}
  for a suitable choice of $R$ such that, using Lemma \ref{lem:awaysmall},
  $P_RV$ has small norm, and of $r$ such that, using Lemma
  \ref{lem:near_midterm}, $(1-P_r)(V_f-V_{f,D})$ has small norm.

 By Lemma \ref{lem:prop_midterm}, $(P_r-P_R)(V_f-V_{f,D})$ is a norm limit of
 finite propagation operators. Let $\phi$ be a compactly supported continuous
 function on 
 $M$ and $\psi$ a compactly supported continuous function on $
 [0,\infty)\times M$ with $\supp(\psi)\cap \{0\}\times\supp(\phi)=\emptyset$. Then 
 $(i+rD)^{-1}\phi\colon L^2(M)\to L^2(M)$ is compact and therefore, 
 by Lemma \ref{lem:comp_midterm} also $\psi(P_r-P_R)(V_f-V_{f,D})\phi$ is
 compact (pseudolocal condition). 
 {Finally, in \cite{PS-Stolz} we have shown, using unit propagation speed of
 the wave operator of $D$ on $M$ that $(1-P_R)V_D$ is a norm limit of finite
 propagation operators that satisfy, in addition, the pseudolocal condition}. Note that
 these derivations did not use invertibility of $D$ and therefore are valid in
 the present context.
{Summarizing, we have shown that $V$ is a norm limit of finite propagation operators $F_\epsilon$
with the additional property that   $\psi F_\epsilon  \phi$
is compact for 
any $\phi\in C_0 (M)$ and  $\psi\in C_0([0,\infty)\times M)$ with $\supp(\psi)\cap \{0\}\times\supp(\phi)=\emptyset$. This proves that $V$ covers
the inclusion in the $D^*$-sense. }
\end{proof}
The proof of Propositions \ref{prop:v-covers} is now complete.

\subsubsection{Proof of Propositions  \ref{prop:deformed-in-d}: the operator $\frac{\abs{D+C}+\partial_t}{D+C-\partial_t}$
  belongs to $D^* (\widetilde{M}\times\RR)^\Gamma$} 
\label{sec:belong_to_D}

\begin{notation}
 In all this subsection we shall denote by $M$ the total space of a 
Riemannian  $\Gamma$-Galois covering with compact base $M/\Gamma$.
We also consider a Riemannian manifold $N$.
(In the application we have in mind 
$N=\reals$.)

 We  consider a $\Gamma$-equivariant Dirac type operator $D$ on
$M$ acting on the sections of a Clifford module bundle; as before, we
will in the notation ignore this bundle. Similarly, we consider
a Dirac type operator $\partial$ on $N$ (and in the applications we have in
mind we shall in fact take 
 $\partial=i\partial_t$ on $N=\RR$). We recall that $D$ and $\partial$ are
 essentially 
 self-adjoint; we shall not distinguish notationally between $D$, $\partial$
 and their unique self-adjoint extensions.
\end{notation}

We wish to prove that the operator
$\frac{\abs{D+C}+\partial_t}{D+C-\partial_t}$ 
  belongs to $D^* (M\times\RR)^\Gamma$.
In the course of the argument, it turns out that it is useful to work not only
with $L^2$, but also with the Sobolev spaces $H^1, H^2$ on our complete 
manifold $M$, or $M\times N$. We have to understand mapping properties for
 perturbed Dirac operators and functions of those, acting
on these Sobolev spaces. Because we don't want to assume that our perturbation
 is a pseudodifferential operator, we can't use standard mapping properties
here; instead we will rely on abstract functional analysis of unbounded
operators on Hilbert spaces.


\begin{definition}
For a manifold like $M$,  $H^1 (M)$ is defined as the domain of the (unique
self-adjoint extension of the) Dirac type operator $D$ endowed with the graph
norm $\left(\abs{Ds}^2+\abs{s}^2)\right)^{1/2}$.  Similarly, $H^2 (M)$ is the
domain of $D^2$, endowed  with the corresponding graph
norm.
 \end{definition}

\begin{proposition}
  Let $E$ be a self-adjoint bounded equivariant operator.
  Then $H^1 (M)$ coincides  with the domain of $D+E$ and its norm is
  equivalent to the graph norm 
  $\left(\abs{(D+E)s}^2+\abs{s}^2\right)^{1/2}$. 
  If $D+E$ is invertible, this norm is also
  equivalent to $\abs{(D+E)s}$.
In this case, $(D+E)^{-1}\colon L^2\to H^1$ is bounded, even an isometry.

Similarly, $H^2 (M)$ is equal to the domain of $D^2+1$ endowed with the graph norm; in
  particular $(D^2+1)\colon L^2\to H^2$ is  an isometry for the appropriate
  choice of norm on $H^2$.

Any suitably equivariant differential operator of order $1$ is a bounded operator $H^1\to L^2$.
In particular, on $M\times N$ both $D$ and $\partial$ are bounded operators
$H^1(M\times N)\to L^2(M\times N)$.

Finally,  $\abs{D}$ and $\abs{D+E}$ are bounded maps $H^1\to L^2$.
\end{proposition}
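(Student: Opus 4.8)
The plan is to derive every assertion from elementary Hilbert-space operator theory, the only substantive input being a G\r{a}rding inequality on the compact quotient. Throughout I write $H^1=\operatorname{dom}(D)$ with its graph norm $\norm{s}_{H^1}=(\norm{Ds}^2+\norm{s}^2)^{1/2}$. First I would dispose of $D+E$: since $D$ is self-adjoint and $E$ is bounded, everywhere defined and self-adjoint, $D+E$ with domain $\operatorname{dom}(D)$ is again self-adjoint (a bounded self-adjoint perturbation of a self-adjoint operator). The triangle inequality gives $\norm{(D+E)s}\le\norm{Ds}+\norm{E}\norm{s}$ and, symmetrically, $\norm{Ds}\le\norm{(D+E)s}+\norm{E}\norm{s}$, so $\operatorname{dom}(D+E)=H^1$ and the $(D+E)$-graph norm is equivalent to $\norm{\cdot}_{H^1}$. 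If in addition $D+E$ is invertible then $\norm{s}\le\norm{(D+E)^{-1}}\norm{(D+E)s}$, whence
\begin{equation*}
\norm{(D+E)s}^2\le\norm{(D+E)s}^2+\norm{s}^2\le\bigl(1+\norm{(D+E)^{-1}}^2\bigr)\norm{(D+E)s}^2,
\end{equation*}
so $\norm{(D+E)\cdot}$ is a third equivalent norm on $H^1$; equipping $H^1$ with exactly this norm, the bounded bijection $(D+E)^{-1}\colon L^2\to H^1$ is an isometry, since $\norm{(D+E)^{-1}t}_{H^1}=\norm{(D+E)(D+E)^{-1}t}=\norm{t}$.

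The $H^2$ statement I would prove by the same pattern with $D^2$ in place of $D+E$. The operator $D^2$ is self-adjoint with $\operatorname{dom}(D^2)=:H^2$; its spectrum lies in $[0,\infty)$, so $D^2+1$ has spectrum in $[1,\infty)$, is boundedly invertible, has the same domain $H^2$, and the two triangle inequalities $\norm{(D^2+1)s}\le\norm{D^2s}+\norm{s}$ and $\norm{D^2s}\le\norm{(D^2+1)s}+\norm{s}$ show its graph norm is equivalent to that of $D^2$. Choosing $\norm{(D^2+1)\cdot}$ as the norm on $H^2$ then makes $(D^2+1)^{-1}\colon L^2\to H^2$, equivalently $(D^2+1)\colon H^2\to L^2$, an isometry.

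The only non-formal step is the assertion about first-order operators. For a $\Gamma$-equivariant first-order differential operator $P$ on $M$ with bounded coefficients---this is what ``suitably equivariant'' should mean, the coefficients then descending to the compact quotient $M/\Gamma$---the G\r{a}rding inequality for the elliptic operator $D$ on the closed manifold $M/\Gamma$ gives $\norm{Ps}_{L^2}\le C(\norm{Ds}_{L^2}+\norm{s}_{L^2})$ for compactly supported smooth $s$; being local and $\Gamma$-invariant this estimate passes to $M$, and then extends to all of $H^1$ by density, so $P\colon H^1\to L^2$ is bounded. (Equivalently one may note $P^{\ast}P\le C(D^2+1)$ as quadratic forms, the symbolic bound $\abs{\sigma_P(\xi)}^2\le C\abs{\xi}^2$ being promoted to the operator inequality by the sharp G\r{a}rding inequality.) For the product $M\times N$ one uses instead that the relevant Dirac type operator $\mathcal D$ satisfies $\mathcal D^2=D^2+\partial^2$ on a common core (the odd--odd cross terms cancel), so by closedness $\norm{Ds}^2+\norm{\partial s}^2=\norm{\mathcal Ds}^2\le\norm{s}_{H^1(M\times N)}^2$ for all $s\in H^1(M\times N)$, giving that $D$ and $\partial$ are bounded $H^1(M\times N)\to L^2(M\times N)$.

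Finally, for the two last operators: $\operatorname{dom}(\abs D)=\operatorname{dom}(D)=H^1$ and $\operatorname{dom}(\abs{D+E})=\operatorname{dom}(D+E)=H^1$ (domains of a self-adjoint operator and its absolute value coincide), and by the spectral theorem $\norm{\abs D s}=\norm{Ds}\le\norm{s}_{H^1}$ and $\norm{\abs{D+E}s}=\norm{(D+E)s}\le(1+\norm E)\norm{s}_{H^1}$, so $\abs D$ and $\abs{D+E}$ are bounded $H^1\to L^2$. I expect the main obstacle to be purely expository---deciding precisely which form of the elliptic estimate to cite and checking it lifts from the compact quotient (and spelling out the closedness argument behind $\mathcal D^2=D^2+\partial^2$ on $H^1(M\times N)$); everything else is bookkeeping with equivalent graph norms.
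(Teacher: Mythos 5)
Your proof is correct and follows essentially the same route as the paper, which likewise treats everything except the last claim as standard perturbation theory and elliptic estimates. The only (cosmetic) difference is the final step: the paper writes $\abs{D+E}=\frac{\abs{D+E}}{D+E+i}\,(D+E+i)$, i.e.\ a bounded function of $D+E$ composed with a bounded map $H^1\to L^2$, whereas you invoke the spectral-theorem identity $\norm{\abs{D+E}s}=\norm{(D+E)s}$ --- both are one-line arguments and equally valid.
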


\begin{proof}
  Only the statements about $\abs{D},\abs{D+E}$ are not easy or 
  standard. For those, we can write $\abs{D+E} = \frac{\abs{D+E}}{D+E+i}
  (D+E+i)$,
  where the bounded function $\frac{\abs{D+E}}{D+E+i}$ of $D+E$ is a bounded
  operator on $L^2$ and $D+E+i$ is bounded from $H^1\to L^2$.
\end{proof}

\begin{proposition}\label{prop:partial_H1}
  Given the Riemannian product $M\times N$, a Dirac type operator
  $D$ on $M$ with bounded equivariant self-adjoint perturbation $E\colon
  L^2(M)\to L^2(M)$, 
  a compactly supported function $\phi$ on $N$ (acting by pointwise
  multiplication) and a compact 
  operator $K\colon L^2(M)\to L^2(M)$, the composition 
  \begin{equation*}
    H^1(M\times N)\xrightarrow{D+E} L^2(M\times N) =L^2(M)\tensor
    L^2(N)\xrightarrow{K\tensor \phi} L^2(M\times N)
  \end{equation*}
  is compact.
\end{proposition}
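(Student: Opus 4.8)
The plan is to reduce to a rank one operator $K$ and then to exploit the product structure of $M\times N$ directly. By the preceding Proposition, $D$, and hence $D+E$, is bounded from $H^1(M\times N)$ to $L^2(M\times N)$, so the assignment $K\mapsto (K\otimes\phi)(D+E)$ is norm continuous from $\mathcal B(L^2(M))$ into the bounded operators $H^1(M\times N)\to L^2(M\times N)$, with operator norm at most $\norm{K}\,\norm{\phi}_\infty\,\norm{D+E}_{H^1\to L^2}$. Since every compact operator on $L^2(M)$ is a norm limit of finite rank ones, and --- via a singular value decomposition, truncation, and the density of $H^1(M)$ in $L^2(M)$ --- even of finite rank operators of the form $s\mapsto\langle s,\xi\rangle\eta$ with $\xi$ in the domain $H^1(M)$ of $D+E$, it will be enough to prove compactness of $(K\otimes\phi)(D+E)$ when $K=\langle\,\cdot\,,\xi\rangle\eta$ with $\xi\in H^1(M)$ and $\eta\in L^2(M)$.

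For such a $K$ I would first rewrite the composition explicitly. Identifying $L^2(M\times N)=L^2(N;L^2(M))$ and using that $H^1(M\times N)\subseteq H^1(M)\otimes L^2(N)$ (the $M$-derivative is bounded on $H^1(M\times N)$), every $u\in H^1(M\times N)$ satisfies $u(t)\in H^1(M)$ for almost every $t\in N$ and $\bigl((D+E)u\bigr)(t)=(D+E)\bigl(u(t)\bigr)$. Moving $D+E$ onto $\xi$ by self-adjointness then gives
\begin{equation*}
(K\otimes\phi)(D+E)u=\eta\otimes(\phi\,c_u),\qquad c_u(t):=\bigl\langle u(t),(D+E)\xi\bigr\rangle_{L^2(M)},
\end{equation*}
with $c_u\in L^2(N)$ and $\norm{c_u}_{L^2(N)}\le\norm{(D+E)\xi}\,\norm{u}_{L^2(M\times N)}$.

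The key observation is then that $c_u$ is in fact $H^1$ in the $N$-variable. Indeed $H^1(M\times N)\subseteq L^2(M)\otimes H^1(N)$ as well (the $N$-derivative $\partial$ is bounded on $H^1(M\times N)$), so the $L^2(M)$-valued map $t\mapsto u(t)$ is $H^1$ in $t$; pairing it with the fixed vector $(D+E)\xi$ and integrating by parts against test functions on $N$ shows $c_u\in H^1(N)$ with weak derivative $t\mapsto\bigl\langle(\partial u)(t),(D+E)\xi\bigr\rangle$, whence $\norm{c_u}_{H^1(N)}\le C\,\norm{u}_{H^1(M\times N)}$. Finally, multiplication by the compactly supported function $\phi$ is a compact operator $H^1(N)\to L^2(N)$: restrict to a compact submanifold with boundary $N'\supseteq\supp\phi$, apply the Rellich--Kondrachov embedding $H^1(N')\hookrightarrow L^2(N')$, and multiply by $\phi$. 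Consequently $u\mapsto\phi\,c_u$ is compact $H^1(M\times N)\to L^2(N)$, so $u\mapsto\eta\otimes(\phi\,c_u)=(K\otimes\phi)(D+E)u$ is compact; the general case follows from the norm approximation above.

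The one delicate point --- and the step I expect to require some care --- is the functional-analytic bookkeeping for $H^1(M\times N)$: that its graph norm controls the $L^2$-norm and the two directional ($M$- and $N$-) derivatives separately (this is precisely the content of the preceding Proposition), that $u(t)$ lies in the domain of $D+E$ for almost every $t$, and that $c_u$ can be differentiated under the $L^2(M)$-pairing using \emph{only} the $L^2(M)$-valued $H^1(N)$-regularity of $u$ and not any joint regularity --- this last fact is what makes the argument go through. The remaining ingredients (Rellich--Kondrachov, density of $H^1(M)$ in $L^2(M)$, and the singular value truncation) are routine.
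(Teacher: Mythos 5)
Your proof is correct, but it follows a genuinely different route from the paper's. The paper keeps $K$ general, first localizes $\phi$ into coordinate charts so as to plant $N$ into a torus $T^l$ (reducing to $\phi=1$), and then runs a Rellich-type argument by hand: it approximates $K$ by a finite-rank operator $K_\Lambda$ adapted to the Browder--G\aa{}rding spectral decomposition of $D+E$ on $L^2(M)$, splits $H^1(T^l)$ into low and high Fourier modes $V_R\oplus V_R^\perp$, and shows the resulting operator is finite rank up to an error of norm $O(R^{-1})$. You instead reduce $K$ to a rank-one operator $\langle\cdot,\xi\rangle\eta$ with $\xi\in H^1(M)$ and use self-adjointness to move $D+E$ onto $\xi$; after that, all the remaining analysis happens in the $N$-variable alone, where compactness is exactly Rellich--Kondrachov for the compactly supported multiplier $\phi$ acting on $H^1(N)$. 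This duality trick is the extra idea that makes the rank-one reduction effective, and it buys you a cleaner argument that avoids the chart/torus reduction and the explicit Fourier analysis, while using the compact support of $\phi$ directly rather than via the torus trick. Both proofs rest on the same input from the preceding proposition, namely that the graph norm on $H^1(M\times N)$ controls the $M$- and $N$-direction derivatives separately; the remaining bookkeeping you flag (that $u(t)\in H^1(M)$ for a.e.\ $t$, that $(D+E)$ acts fiberwise, and that $\partial$ commutes with the $L^2(M)$-pairing against the fixed vector $(D+E)\xi$) is standard for closures of operators of the form $A\otimes 1$ and $1\otimes\partial$ and does not hide a gap.
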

\begin{proof}
  Using the usual reduction techniques (write $\phi$ as a finite sum of
  functions with support in a coordinate neighborhood, use charts to plant
  these coordinate neighborhoods 
  into $T^l$) one reduces to the
  case where $N=T^l$. Then, the operator with general $\phi$ is the
  composition of the bounded operator on $L^2(M\times T^l)$ given by
  multiplication with $\phi$ with the special operator where $\phi=1$. It
  therefore suffices to show that the latter one is compact, and we set $\phi=1$.

  We now apply the strategy of the proof of the Rellich lemma. We have to
  understand a bit 
  better the domain $H^1(M\times T^l)$ which, as the domain of the (perturbed)
  Dirac 
  operator on $M\times T^l$ is a subspace of $L^2(M\times T^l)$ (with its own
  norm). We write $L^2(M\times T^l) = L^2(M)\tensor L^2(T^l)$. Using Fourier
  transform, we unitarily identify $L^2(T^l)$ with $l^2(\integers^l)$. Using
  the Browder-Garding spectral decomposition for the self-adjoint unbounded
  operator $D+E$, we write as a 
  direct integral $L^2(M)=\int d\mu(\lambda)\; H_\lambda$. By the definition of
  compact operators as norm limits of finite rank operators, we can replace
  $K\colon L^2(M)\to L^2(M)$
  up to an error of arbitrarily small norm by a finite rank operator
  $K_\Lambda$ such that
  $K_\Lambda$ maps $\int_{-\Lambda}^\Lambda H_\lambda $ to itself and is zero
  on the complement. By definition of the spectral decomposition, $D+E$ acts on
  the direct integral by multiplication with the spectral parameter
  $\lambda$. In particular, $\int_{-\Lambda}^\Lambda H_\lambda$ is entirely
  contained in the domain of $D+E$ (i.e.~in $H^1(M)$) and restricted to this
  subspace the norm of $D+E$ 
   is certainly
  bounded by the  norm of $D+E$ as a bounded operator from $H^1$ 
  to $L^2$. We denote this norm $C$.

  Thus the Hilbert space $H^1(M\times T^l)$  has the direct summand
  $\int_{-\Lambda}^\Lambda H_\lambda \tensor H^1(T^l)$. Here,
   after Fourier transform $L^2(T^l)\iso l^2(\integers^l)$, we identify
   $H^1(T^l)$ 
   with the domain of the operator
   \begin{equation*}
l^2(\integers^l)\to
  l^2(\integers^l);\;(\lambda_n)_{n\in\integers^k}\mapsto  
  (\abs{n}\lambda_n)_{n\in\integers},\qquad\text{with }\abs{n_1,\dots,n_l}=\abs{n_1}+\dots+\abs{n_l}
\end{equation*}
endowed with the
  graph norm. For $R>0$, split
  \begin{equation*}
  H^1(T^l):=V_R\oplus V_R^\perp\quad\text{ where}\qquad V_R=\{ (\lambda_n)_{n\in\integers^l}
  \mid \lambda_n=0\text{ if }\abs{n}>R\} \text{ is finite dimensional}.
\end{equation*}
 Note that  the
  inclusion $V_R^\perp\into l^2(\integers^l)$ has norm $<R^{-1}$.
 We now conclude the following:
 \begin{enumerate}
 \item The operator $(K\tensor \id_{L^2(T^l)})\circ (D+E)$$\colon H^1(M\times T^k)\to
   L^2(M)\tensor L^2(T^k)$ is norm close to $(K_\Lambda\tensor
   \id_{L^2(T^l)})\circ (D+E)$.
 \item restricted to the direct summand $\int_{-\Lambda}^\Lambda H_\lambda
   \tensor V_R$ of $H^1(M\times T^l)$, $(K_\Lambda\tensor \id) \circ (D+E)$ has
   finite rank with image $\im(K_\Lambda)\tensor V_R$.
 \item restricted to the direct summand $\int_{-\Lambda}^\Lambda H_\lambda
   \tensor V_R^\perp$, the operator $D+E\colon H^1(M\times T^k)\to L^2(M\times
   T^k)$ has norm 
   $\le C \cdot R^{-1}$, where 
   $R^{-1}$ 
   comes from
   the ratio of the $H^1$-norm and the $L^2$-norm on $V_R^\perp$.

   Finally, on the orthogonal complement of $\int_{-\Lambda}^{\Lambda}
   H_\lambda\tensor H^1(T^k)$ in $H^1(M\times T^k)$, by the choice of
   $K_\Lambda$, the operator $(K_\Lambda\tensor \id)\circ (D+E)$ vanishes.
 \item It follows that $(K_\Lambda\tensor\id)\circ (D+E)$ is up to an error of
   norm $C/R$ a 
   finite rank operator.
 \item All together, $(K\tensor \id) \circ (D+E)\colon H^1(M\times T^l)\to
   L^2(M\times T^l)$ is a norm limit of finite rank operators, i.e.~is compact.
 \end{enumerate}

\end{proof}

\begin{proposition}\label{prop:D}
 If $\phi$ stands for the multiplication operator with the compactly
  supported $C^1$-function $\phi$ and $P$ for any first order equivariant
  differential operator, the commutator $[\phi,P]$ is compact as operator
  from $H^1(M)\to L^2(M)$.

The same applies if $P$ is replaced by $E$ or $P+E$ for any equivariant
self-adjoint bounded operator $E$ which is a norm limit of operators with
finite propagation.
\end{proposition}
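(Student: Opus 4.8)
The plan is to reduce the whole statement to the single fact that \emph{multiplication by a compactly supported continuous bundle endomorphism $\Psi$ is a compact operator $H^1(M)\to L^2(M)$} --- a disguised form of the Rellich lemma. Granting this, the differential operator case is immediate: writing $P$ locally as $\sum_j A_j\partial_j+B$ with smooth matrix-valued coefficients, one computes $[\phi,P]=-\sum_j A_j(\partial_j\phi)=-\sigma_1(P)(d\phi)$, i.e.\ $[\phi,P]$ is exactly multiplication by a bundle endomorphism which is continuous (because $\phi\in C^1$, so $d\phi$ is continuous) and compactly supported (because $\supp d\phi\subseteq\supp\phi$); hence it is compact $H^1(M)\to L^2(M)$.

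To prove the compactness claim, choose $\chi\in C_c^\infty(M)$ with $\chi\equiv 1$ on $\supp\Psi$ and set $K:=\supp\chi$. Multiplication by $\chi$ is bounded $H^1(M)\to H^1(M)$, since $\norm{D(\chi s)}\le\norm{\chi}_\infty\norm{Ds}+\norm{\sigma_1(D)(d\chi)}_\infty\norm{s}$, and its image lies in the closed subspace $H^1_K\subset H^1(M)$ of sections in the domain of $D$ with support in $K$. By elliptic regularity for the first order elliptic operator $D$, on $H^1_K$ the $D$-graph norm dominates the standard first Sobolev norm on the compact set $K$, so $H^1_K\hookrightarrow L^2(M)$ is compact by Rellich--Kondrachov. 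Composing the bounded maps $H^1(M)\xrightarrow{\chi}H^1_K\hookrightarrow L^2(M)\xrightarrow{\Psi}L^2(M)$ (the middle one compact) shows that multiplication by $\Psi$ is compact. This Rellich step is the only non-formal ingredient; everything else is bookkeeping.

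For the operator $E$ we argue separately on the two summands of $[\phi,E]=\phi E-E\phi$. The composite $E\phi\colon H^1(M)\xrightarrow{\phi}L^2(M)\xrightarrow{E}L^2(M)$ is compact because multiplication by the $C_c^1$ function $\phi$ is compact $H^1(M)\to L^2(M)$ by the claim above and $E$ is bounded. For $\phi E$ we use finite propagation: if $E$ has propagation $R$, pick $\psi\in C_c^\infty(M)$ with $\psi\equiv 1$ on the $R$-neighborhood of $\supp\phi$, so that $\phi E=\phi E\psi$; this factors as $H^1(M)\xrightarrow{\psi}L^2(M)\xrightarrow{E}L^2(M)\xrightarrow{\phi}L^2(M)$ with the first map compact, hence $\phi E$ is compact. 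For a general $E$ which is a norm limit of finite propagation operators $E_n$ we have $\norm{\phi E-\phi E_n}_{H^1\to L^2}\le\norm{\phi}_\infty\norm{E-E_n}_{L^2\to L^2}\norm{\iota_{H^1\hookrightarrow L^2}}\to 0$, so $\phi E$ is a norm limit of compact operators and therefore compact. Hence $[\phi,E]$ is compact, and finally $[\phi,P+E]=[\phi,P]+[\phi,E]$ is compact as a sum of compact operators. I do not anticipate any real obstacle; the only point requiring care is the identification, via interior elliptic estimates, of the $D$-graph norm on compactly supported sections with an honest Sobolev norm, so that Rellich applies.
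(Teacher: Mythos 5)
Your proof is correct and follows essentially the same route as the paper: $[\phi,P]$ is multiplication by the compactly supported continuous endomorphism $-\sigma_1(P)(d\phi)$, hence compact $H^1\to L^2$ by Rellich, and $[\phi,E]$ is handled by writing $E\phi$ as (bounded)$\circ$(compact) and $\phi E\approx\phi E\psi$ via finite propagation plus a norm-limit approximation. The only difference is that you spell out the Rellich step (cutoff, elliptic estimate on compactly supported sections, Rellich--Kondrachov), which the paper simply invokes.
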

\begin{proof}
  The commutator $[\phi,P]$ is a multiplication operator
  with a derivative of $\phi$, a compactly supported function. The latter ones
  are compact as maps from $H^1$ to $L^2$.

  For the perturbation, we have to consider also the commutator $\phi E-E\phi$. By
  finite propagation, up to a norm-small error we can write $ \phi E = \phi E
  \psi$ with a compactly supported $\psi$. Then we only need to use that
  $H^1\to L^2\xrightarrow{\phi} L^2$ and $H^1\to L^2\xrightarrow{\psi} L^2$ are
  compact by the Rellich lemma and that $E\colon L^2\to L^2$ is bounded.
\end{proof}

\begin{proposition}\label{prop:DE}
  Let $D$ be an equivariant Dirac type operator on $M$ and $E\colon L^2(M)\to
  L^2(M)$ 
  an $L^2$-bounded self-adjoint perturbation which is a norm limit
  of equivariant finite propagation operators such that $D+E$ becomes 
  invertible. Assume that
  $\phi$ is a compactly supported $C^1$-function on
  $M\times N$. {Then $\abs{D+E}$, as map from
  $H^1(M\times N)\to L^2(M\times N)$, is a limit of equivariant bounded
  operators $F_\epsilon\colon
  H^1(M\times N)\to L^2(M\times N)$ that are of finite propagation and such that $[F_\epsilon,\phi]
  \colon
  H^1(M\times N)\to L^2(M\times N)$
   is compact .}
\end{proposition}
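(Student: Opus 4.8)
The plan is to deduce the statement from the Rellich-type compactness results already available — the Rellich lemma, Proposition~\ref{prop:D}, and Proposition~\ref{prop:partial_H1} — by combining the factorisation used in the previous proposition with a functional-calculus approximation of $|\cdot|$ of the kind carried out in Lemma~\ref{lem:absapp}.

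First I would factor $|D+E| = (D+E+i)\,g(D+E)$, where $g(\lambda):=|\lambda|(\lambda+i)^{-1}$ is bounded and Borel, and continuous on $\spec(D+E)$ since, by invertibility, the latter is bounded away from $0$. Here $g(D+E)$ (acting, as everywhere in this subsection, as $g(D+E)\otimes\id_N$ on $L^2(M\times N)$) is bounded on $L^2(M\times N)$ and, commuting with $D+E$, restricts to a bounded operator on $H^1(M\times N)$ of the same norm, while $D+E+i$ is bounded $H^1(M\times N)\to L^2(M\times N)$ and is a norm limit of equivariant finite-propagation operators (it is $D$, of propagation $0$, plus $E$, plus $i\cdot\id$). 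So $|D+E|$ is already a norm limit of equivariant finite-propagation operators $H^1(M\times N)\to L^2(M\times N)$; the content of the proposition is to produce such an approximating sequence along which, in addition, the commutator with $\phi$ is compact. To that end I would approximate $g$ uniformly on $\spec(D+E)$ by functions $g_\epsilon$ in the $*$-subalgebra of $C([-\infty,\infty])$ generated by $(\lambda\pm i)^{-1}$, $1$ and $\psi_1(\lambda):=\lambda(1+\lambda^2)^{-1/2}$ — possible by the Stone--Weierstrass argument of Lemma~\ref{lem:absapp}, because after altering $g$ near $0$ the difference $g-\psi_1$ lies in $C_0(\reals)$ (both $g$ and $\psi_1$ tend to $\sgn(\lambda)$ at $\pm\infty$) — and set $F_\epsilon:=(D+E+i)\,g_\epsilon(D+E)$. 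Then $F_\epsilon\to|D+E|$ in operator norm $H^1(M\times N)\to L^2(M\times N)$, and $F_\epsilon$, being a $*$-polynomial in $D+E+i$, the resolvents $(D+E\pm i)^{-1}$, $\psi_1(D+E)$ and (after collapsing $(D+E+i)\psi_1(D+E)=\langle D+E\rangle-\langle D+E\rangle^{-1}+i\psi_1(D+E)$ with $\langle A\rangle:=(1+A^2)^{1/2}$) $\langle D+E\rangle$, is a norm limit of equivariant finite-propagation operators.

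It then remains to see that $[\phi,F_\epsilon]\colon H^1(M\times N)\to L^2(M\times N)$ is compact. By the Leibniz rule this reduces to the compactness, as maps $H^1(M\times N)\to L^2(M\times N)$, of the commutators of $\phi$ with the building blocks, flanked by bounded functions of $D+E$: for $[\phi,D+E+i]=c(d_M\phi)+[\phi,E]$ one uses the Rellich lemma (multiplication by the compactly supported continuous section $c(d_M\phi)$) together with the argument of Proposition~\ref{prop:D} for $[\phi,E]$ (localise $\phi E$ by finite propagation, then Rellich); for a resolvent, $[\phi,(D+E\pm i)^{-1}]=-(D+E\pm i)^{-1}[\phi,D+E](D+E\pm i)^{-1}$ is compact because the resolvents are bounded between the relevant Sobolev spaces and the middle factor is compact by the previous point; and for $\psi_1(D+E)$ and $\langle D+E\rangle=|D+E|+(\text{a }C_0\text{-function of }D+E)$ the formulas $\psi_1(A)=\tfrac2\pi\int_0^\infty A(A^2+1+s^2)^{-1}\,ds$ and $|A|=A\,\sgn(A)$ (with $\sgn(A)=\psi_1(A)+(\text{a }C_0\text{-function})$) again reduce everything, via norm-convergent resolvent integrals, to $[\phi,D+E]$ sandwiched between functions of $D+E$; the only routine care here is to keep track of the Sobolev-space operator norms of those functions so that the integrals converge in norm.

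I expect the genuine obstacle to lie not in this Sobolev bookkeeping but in the fact that on the product $M\times N$ the operators involved have the form $T\otimes\id_N$ with $T$ a (bounded, pseudolocal) function of the single-factor operator on $M$, and such operators are \emph{not} pseudolocal on $M\times N$ — a nonzero compact operator on $L^2(M)$ tensored with $\id_{L^2(N)}$ is not compact on $L^2(M\times N)$ — so compactness of the commutators cannot be read off as on a single factor. The resolution, which is exactly the mechanism of Proposition~\ref{prop:partial_H1}, is that we only ever need the commutators compact as maps \emph{out of} $H^1(M\times N)$: the $N$-direction regularity carried by $H^1(M\times N)$ lets the Rellich lemma in the $N$-variable supply the missing compactness. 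Concretely, each commutator above is, fibrewise over $N$, a norm-continuous and $N$-compactly supported family of compact operators $H^1(M)\to L^2(M)$; approximating such a family uniformly in $N$ by finite-rank operators and factoring the resulting operator through the Rellich-compact multiplication $H^1(N)\to L^2(N)$ by a cut-off supported near the $N$-support of the family exhibits it as a norm limit of compact operators $H^1(M\times N)\to L^2(M\times N)$. Since, after the reductions, every summand of $[\phi,F_\epsilon]$ has this fibrewise shape, this single observation closes the argument.
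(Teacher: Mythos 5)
Your overall strategy---factor $\abs{D+E}$ as a first--order factor times a bounded Borel function of $D+E$, approximate the bounded factor, and reduce the commutator with $\phi$ via the Leibniz rule---is the right one, and it is in essence the paper's: there one writes $\abs{D+E}=\bigl(\tfrac{\abs{D+E}}{D+E}\tensor\id\bigr)\circ(D+E)$, approximates the sign function by the finite--propagation \emph{pseudolocal} operators $\kappa_\epsilon$ on $L^2(M)$ supplied by Lemma \ref{lem:ancon}, reduces to $\phi=\alpha\tensor\beta$, and then needs exactly two compactness inputs: Proposition \ref{prop:D} for $\kappa_\epsilon[\alpha,D+E]\tensor\beta$ and Proposition \ref{prop:partial_H1} for $[\alpha,\kappa_\epsilon](D+E)\tensor\beta$. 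You correctly identify both the central difficulty (operators $T\tensor\id_N$ are not pseudolocal on the product) and the mechanism that resolves it (the $N$--regularity retained by $H^1(M\times N)$, i.e.\ Proposition \ref{prop:partial_H1}). Your treatment of the resolvent pieces of $g_\epsilon$ is also fine: there the adjacent resolvent always absorbs the unbounded factor $D+E+i$, and everything reduces to $[\phi,D+E]\colon H^1(M\times N)\to L^2(M\times N)$ compact, which is product Rellich plus the localization trick.

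The gap is in the step where you claim that the $\psi_1$--part (equivalently $\langle D+E\rangle$, equivalently $\abs{D+E}$ itself) is handled ``via norm--convergent resolvent integrals'' reducing to $[\phi,D+E]$ sandwiched between bounded functions of $D+E$. It is not. After collapsing $(D+E+i)\psi_1(D+E)$ into $\langle D+E\rangle-\langle D+E\rangle^{-1}+i\psi_1(D+E)$ you must control $[\phi,\langle D+E\rangle]$, and every resolvent expansion of this (or of $(D+E+i)\int_0^\infty[\phi,(D+E)(1+(D+E)^2+s^2)^{-1}]\,ds$) produces terms of the form $(D+E)\,[\phi,D+E]\,(\cdots)$ in which the unbounded factor acts on the \emph{output} of $[\phi,D+E]$, which lies only in $L^2(M\times N)$; the remaining well--defined terms have integrands whose $H^1\to L^2$ norms decay no faster than $(1+s^2)^{-1/2}$, so the integrals do not converge in operator norm. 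This is not routine Sobolev bookkeeping---$[\phi,\langle D+E\rangle]$ compact out of $H^1$ is essentially the statement being proved, reappearing. The repair is to keep the bounded factor on the left, $F_\epsilon=g_\epsilon(D+E)\circ(D+E+i)$, so that the Leibniz rule yields $g_\epsilon(D+E)[\phi,D+E]$ (compact as above) plus $[\phi,g_\epsilon(D+E)]\circ(D+E+i)$; for the latter one must \emph{not} expand the commutator, but instead use that $g_\epsilon(D+E)\in D^*(\widetilde M)^\Gamma$ by Proposition \ref{prop:hr-controlled-sharp}, hence $[\alpha,g_\epsilon(D+E)]$ is compact on $L^2(M)$, and then invoke Proposition \ref{prop:partial_H1} for the composition $\bigl([\alpha,g_\epsilon(D+E)]\tensor\beta\bigr)\circ(D+E)$. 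Once you do this the Stone--Weierstrass layer is superfluous and you have recovered the paper's two--term argument with $g_\epsilon$ in place of $\kappa_\epsilon$. (Two minor points: your $F_\epsilon$, like the paper's, are only norm limits of finite--propagation operators, since $E$ itself is---this is harmless for the application; and your final ``fibrewise finite--rank'' sketch cannot replace Proposition \ref{prop:partial_H1} as stated, because the finite--rank approximants of a compact operator $H^1(M)\to L^2(M)$ involve $H^1(M)$--continuous functionals, whose $t$--derivatives require $\partial_t u(\cdot,t)\in H^1(M)$, which $u\in H^1(M\times N)$ does not provide.)
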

\begin{proof}
  {We can reduce to $\phi=\alpha(x)\beta(y)$ with $\alpha\colon M\to\complexs$,
  $\beta\colon N\to \complexs$ compactly supported $C^1$-functions. 
  We write $\abs{D+E}\colon H^1(M\times N)\to L^2(M\times N)$
  as 
  $$H^1(M\times N)\xrightarrow{D+E} L^2(M\times N)=L^2(M)\tensor
  L^2(N)\xrightarrow{(\abs{D+E}/(D+E))\tensor \id} L^2(M)\tensor L^2(N)$$
  {Using  Lemma \ref{lem:ancon}}, write 
  $\abs{D+E}/(D+E)$ on $M$ as the limit of equivariant finite propagation
  pseudolocal operators $\kappa_\epsilon$ and 
  consider $F_\epsilon$ given by the composition
   $$F_\epsilon\colon H^1(M\times N)\xrightarrow{D+E} L^2(M\times N)=L^2(M)\tensor
  L^2(N)\xrightarrow{\kappa_\epsilon\tensor \id} L^2(M)\tensor L^2(N)$$
  Observe that  $F_\epsilon$ is of finite propagation as a
  bounded operator $H^1(M\times N)\to L^2(M\times N)$. We now show that it is also pseudolocal
  as a bounded  operator $H^1(M\times N)\to L^2(M\times N)$. We have
  \begin{equation*}
    [\phi,F_\epsilon] = \left([\alpha, \kappa_\epsilon]\right) (D+E) \,\tensor \,\beta \;+ \; \kappa_\epsilon
    \left(
    [\alpha, (D+E)]\right)\,\tensor \,\beta.
  \end{equation*}
Use here that $\beta$
 commutes with all the other operators, which allows to split off the tensor
 factor $\beta$ in $\phi=\beta\alpha$ throughout.
 The second summand is compact by Proposition~\ref{prop:D}. The first summand is compact by
 Proposition~\ref{prop:partial_H1}.}
 \end{proof}

\begin{proposition}\label{prop:finpro}
  Given a Dirac type operator $D$ as above, the operator $(1+D^2)^{-1}\colon
  L^2\to H^2$ is {a norm limit of equivariant
  finite propagation operators $G_\epsilon\colon L^2\to H^2$ with the property that
  $[\phi, G_\epsilon]\colon L^2\to H^2$ is compact for any compactly supported
  smooth function on $M$.}
\end{proposition}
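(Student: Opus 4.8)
\textbf{Plan for the proof of Proposition \ref{prop:finpro}.}
The strategy is to reduce the claim to the mapping properties established in the previous propositions, by expressing $(1+D^2)^{-1}$ through the first-order resolvents of $D$ whose relevant properties are already understood. First I would note the algebraic identity $(1+D^2)^{-1} = (D+i)^{-1}(D-i)^{-1}$; since $D$ is essentially self-adjoint, both factors are bounded on $L^2$ and, by (an easy variant of) Lemma \ref{lem:ancon} applied with $E=0$, each of $(D\pm i)^{-1}$ is a norm limit of equivariant finite propagation operators which are moreover pseudolocal, i.e.~of the form $H_\epsilon^{\pm}$ with $[\phi, H_\epsilon^{\pm}]$ compact as an operator $L^2\to L^2$. (Concretely, $(D\pm i)^{-1} = g_\pm(D)$ for $g_\pm\in C_0(\reals)$, so this is exactly the situation of the Higson--Roe functional calculus.)

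Next I would upgrade the target from $L^2$ to the Sobolev space $H^2$. The point is that $(1+D^2)^{-1}\colon L^2\to H^2$ is an isometry (for the graph norm on $H^2=\dom(D^2)$), so it suffices to produce, for each $\epsilon$, an equivariant finite propagation operator $G_\epsilon\colon L^2\to L^2$ whose composition with the isometry $1+D^2\colon H^2\to L^2$ lands close to $(1+D^2)^{-1}$ and is pseudolocal in the $H^2$-sense. I would set $G_\epsilon := (1+D^2)^{-1}\bigl((1+D^2) H_\epsilon^{+}H_\epsilon^{-}\bigr)$ after first checking that $H_\epsilon^{+}H_\epsilon^{-}$ can be chosen so that $(1+D^2)H_\epsilon^{+}H_\epsilon^{-}$ makes sense and is bounded on $L^2$; alternatively, and more cleanly, approximate the function $(1+\lambda^2)^{-1}$ itself by finite-propagation functional calculus: write $(1+D^2)^{-1}$ as a norm limit in $\mathcal{B}(L^2,H^2)$ of $g_k(D)$ where $g_k\in C_c^\infty(\reals)$, $g_k\to (1+\lambda^2)^{-1}$ in the $C^2$-topology weighted so that $(1+\lambda^2)g_k(\lambda)\to 1$ uniformly; then each $g_k(D)$ has finite propagation (bounded by $k$, say, from the support of $\widehat{g_k}$) and is automatically bounded $L^2\to H^2$ since $(1+D^2)g_k(D)=((1+\lambda^2)g_k)(D)$ is bounded on $L^2$.

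For the commutator estimate, I would compute $[\phi, g_k(D)]$ using the standard integral (Fourier-inversion) representation of $g_k(D)$ via the wave operators, exactly as in the proof of Lemma \ref{lem:fD_loc}: the commutator is, up to norm-small error, built from $[\phi, e^{isD}]$ for $|s|\le k$, and $[\phi,e^{isD}] = \int_0^s e^{itD}\,[\,iD,\phi\,]\,e^{i(s-t)D}\,dt$ by Duhamel, where $[iD,\phi]$ is multiplication by a compactly supported function, hence compact as a map $H^1\to L^2$ by Rellich (Proposition \ref{prop:D}). Feeding this through and using that $D\colon H^2\to H^1$ is bounded, one gets that $[\phi,g_k(D)]\colon L^2\to H^2$ is compact, and passing to the norm limit gives the claim for $(1+D^2)^{-1}$ itself.

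The main obstacle I anticipate is bookkeeping of the two topologies: one must simultaneously control finite propagation (a coarse-geometric condition, robust under norm limits in $\mathcal{B}(L^2)$) and the requirement that the operators live in $\mathcal{B}(L^2,H^2)$ and are pseudolocal there (which is not automatically preserved under $L^2$-norm limits). The resolution is to do the approximation at the level of the \emph{weighted} function $(1+\lambda^2)g_k(\lambda)$, so that $\|(1+D^2)(g_k(D)-g_\ell(D))\|_{\mathcal{B}(L^2)}$ is what is being made small, which is precisely the $\mathcal{B}(L^2,H^2)$-norm of $g_k(D)-g_\ell(D)$; once this is arranged, both the finite-propagation and the pseudolocality statements pass to the limit without difficulty, and the compactness of the commutators follows from Proposition \ref{prop:D} as indicated. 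No new analytic input beyond Rellich, Duhamel, and unit propagation speed for the wave operator of $D$ on $M$ is needed.
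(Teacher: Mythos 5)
Your second (``cleaner'') route is, in substance, the paper's proof: one splits $(1+x^2)^{-1}=g_\epsilon(x)+h_\epsilon(x)$ with $\widehat{g_\epsilon}$ compactly supported, so that $G_\epsilon:=g_\epsilon(D)$ has finite propagation by unit propagation speed of the wave operator, and one estimates $\norm{(1+D^2)^{-1}-G_\epsilon}_{L^2\to H^2}=\norm{(1+D^2)h_\epsilon(D)}_{L^2\to L^2}\le \sup_x\abs{(1+x^2)h_\epsilon(x)}$, which the paper bounds by $\norm{\widehat{h_\epsilon}}_{1}+\norm{\widehat{h_\epsilon}''}_{1}$, using that $\hat f(\xi)=\pi e^{-\abs{\xi}}$, though non-smooth at $0$, is smooth away from $0$ with rapidly decreasing derivatives. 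Two slips in your write-up: you want $\widehat{g_k}$, not $g_k$, to be compactly supported (a nonzero function cannot have both), and your first route via $G_\epsilon=(1+D^2)^{-1}\bigl((1+D^2)H_\epsilon^+H_\epsilon^-\bigr)$ is circular and should simply be dropped, as you in effect do.

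The genuine gap is in the commutator step. Your Duhamel identity places $[iD,\phi]$ in the middle of a composition whose right-hand factor $e^{i(s-t)D}$ maps $L^2$ only to $L^2$; Rellich makes $[iD,\phi]$ compact only \emph{from} $H^1$, so what your argument yields is compactness of $[\phi,g_k(D)]$ as a map $H^1\to L^2$. The proposition requires compactness as a map $L^2\to H^2$ --- a weaker source topology \emph{and} a stronger target topology --- and ``feeding this through'' does not bridge either direction. A correct argument must exploit the smoothing of $g_k(D)$ itself, for instance via
\begin{equation*}
(1+D^2)\,[\phi,g_k(D)] \;=\; [\phi,\bigl((1+\lambda^2)g_k\bigr)(D)] \;+\; [D^2,\phi]\,g_k(D),
\end{equation*}
treating the second term by $g_k(D)\colon L^2\to H^2$ bounded together with $[D^2,\phi]=D[D,\phi]+[D,\phi]D\colon H^2\to L^2$ compact (Rellich), and the first by pseudolocality of $\bigl((1+\lambda^2)g_k\bigr)(D)$ on $L^2$. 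To be fair, the paper is itself terse at exactly this point, deferring to ``a doubling trick and standard microlocal techniques'' from \cite{PS-Stolz}; but the specific argument you sketch does not establish the stated $L^2\to H^2$ mapping property.
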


\begin{proof}
 {We follow the idea of the proof of Proposition 4.19 in \cite{PS-Stolz}. Consider $f(x):=(1+x^2)^{-1}$
 and arrange that $f(x)= g_\epsilon (x)+h_\epsilon (x)$ where $g_\epsilon (x)$ has
  compactly supported Fourier transform and the Fourier transform of
  $h_\epsilon(x)$ together with its second derivative {have small $L^1$-norm; use here that $\hat f(\xi)$,
  albeit non-smooth at $0$, is away from $0$ smooth such that all derivatives
  are rapidly decreasing. Set $G_\epsilon:= g_\epsilon (D)$.
 Then $G_\epsilon$ has, by unit propagation of the wave operator, finite
  propagation, and 
    \begin{multline*}
      \norm{(1+D^2)^{-1} - G_\epsilon}_{L^2\to
        H^2}=\norm{h_\epsilon(D)}_{L^2\to H^2} = \norm{(1+D^2) h_\epsilon
        (D)}_{L^2\to L^2} \le \abs x\mapsto (1+x^2) h_\epsilon
          (x)_\infty\\ \le \abs{\hat h}_1+ \abs{\hat h''}_1 << 1.
    \end{multline*}
  The fact that $G_\epsilon$ is pseudolocal as a map $L^2\to H^2$ is proved by employing
  a doubling trick and 
  (standard) microlocal techniques, as in 
  \cite{PS-Stolz}.}}
\end{proof}
A purely functional analytic argument, which applies to Lipschitz manifolds
without a pseudodifferential calculus is given in \cite{zenobi}.

\begin{proposition}\label{prop:finproful}
  Let $D$ be a Dirac type operator on $M$ with equivariant self-adjoint
  bounded perturbation $E\colon L^2(M)\to
  L^2(M)$ as above such that $D+E$ is invertible and such that $E$ is the norm
  limit of equivariant finite propagation operators. {Then
  $(D+E)^{-1}\colon L^2\to H^1$ is a norm limit of equivariant finite propagation
  operators $A_\epsilon\colon L^2\to H^1$ with the property that for
  any compactly supported smooth function $\phi$ the operator $[\phi,A_\epsilon]\colon L^2\to
  H^1$ is compact.}
\end{proposition}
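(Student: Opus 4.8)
The plan is to combine the three structural inputs already assembled in this subsection: the invertibility of $D+E$ (so that $(D+E)^{-1}$ makes sense as a bounded map $L^2\to H^1$, indeed an isometry for the right choice of norm, by the proposition following Definition~on $H^1$), the finite-propagation approximation of $(1+D^2)^{-1}$ from Proposition~\ref{prop:finpro}, and the finite-propagation approximation of $\abs{D+E}$ (equivalently of the bounded function $\abs{D+E}/(D+E)$ of $D+E$) from Lemma~\ref{lem:ancon} together with Proposition~\ref{prop:DE}. The point is to write $(D+E)^{-1}$ as a product of operators each of which we already know to be a norm limit of equivariant finite propagation operators that are, in the appropriate sense, pseudolocal (have compact commutators with compactly supported functions), and then to check that products and norm limits of such operators inherit the property.

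First I would record the algebraic identity that expresses $(D+E)^{-1}$ in terms of operators we control. Since $D+E$ is self-adjoint and invertible, $\abs{D+E}$ is invertible and
\begin{equation*}
  (D+E)^{-1} = \frac{D+E}{\abs{D+E}}\cdot \abs{D+E}^{-1} = \frac{D+E}{\abs{D+E}}\cdot \abs{D+E}\cdot (D+E)^{-2}.
\end{equation*}
Now $(D+E)^{-2}$ is, by the Neumann series argument used repeatedly above (e.g.\ in the proof of Lemma~\ref{lem:fDC_loc} and in Proposition~\ref{prop:finproful}'s companions), a norm-convergent series in $(1+D^2)^{-1}$ and the bounded operator $E$: write $(D+E)^2 = D^2 + (DE+ED+E^2)$, hence $(D+E)^{-2} = (1+D^2)^{-1}\sum_{k\ge 0}\bigl(-(DE+ED+E^2-1)(1+D^2)^{-1}\bigr)^k$ once one checks the series converges in operator norm $L^2\to L^2$ (it does, because $D+E$ is invertible so $(D+E)^{-2}$ is bounded and the remainder terms can be controlled; alternatively one uses $(1+D^2)^{-1}$ composed with the bounded operator $(1+D^2)(D+E)^{-2}$). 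The factors $(1+D^2)^{-1}\colon L^2\to H^2$ are handled by Proposition~\ref{prop:finpro}; $DE$, $ED$, $E^2$ are bounded and $D\colon H^1\to L^2$ is bounded, so composing keeps us in the finite-propagation-pseudolocal world; and $\abs{D+E}\colon H^1\to L^2$ together with $\frac{D+E}{\abs{D+E}}$ on $L^2$ is exactly what Proposition~\ref{prop:DE} and Lemma~\ref{lem:ancon} provide.

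The routine but slightly fiddly part is the bookkeeping lemma: if $X_\epsilon\to X$ in norm and $Y_\epsilon\to Y$ in norm, with each $X_\epsilon,Y_\epsilon$ equivariant of finite propagation and with $[\phi,X_\epsilon]$, $[\phi,Y_\epsilon]$ compact for all compactly supported $\phi$, then $X_\epsilon Y_\epsilon\to XY$ with the same properties, because $[\phi,X_\epsilon Y_\epsilon] = [\phi,X_\epsilon]Y_\epsilon + X_\epsilon[\phi,Y_\epsilon]$; and a norm limit of such operators again has compact commutators with compactly supported functions. Here one must be careful about the mapping spaces: the commutators in Proposition~\ref{prop:finpro} are compact as maps $L^2\to H^2$, those in Proposition~\ref{prop:DE} as maps $H^1\to L^2$, and the various intermediate operators ($D$, $E$, bounded functions of $D$) must be checked to be bounded on the relevant Sobolev spaces so that the compositions make sense and the Leibniz rule applies --- but all of this is exactly the content of the three propositions preceding Proposition~\ref{prop:D}, so no new analysis is needed. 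The main obstacle, such as it is, is precisely this careful tracking of which operator acts on $L^2$, $H^1$, or $H^2$, and arranging the factorization so that each commutator lands in a space where compactness has already been established; once the factorization $(D+E)^{-1} = \frac{D+E}{\abs{D+E}}\cdot\abs{D+E}\cdot(D+E)^{-2}$ is pushed through $H^1 \leftarrow L^2 \leftarrow H^2 \leftarrow L^2$ (reading right to left), the statement follows by the Leibniz-rule bookkeeping and passage to norm limits.
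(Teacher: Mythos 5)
Your overall strategy --- factor $(D+E)^{-1}$ into pieces already known to be norm limits of equivariant finite propagation pseudolocal operators, then use the Leibniz rule $[\phi,XY]=[\phi,X]Y+X[\phi,Y]$ and pass to norm limits --- is exactly the paper's strategy, and your bookkeeping lemma about products and limits is fine. The gap is in the specific factorization $(D+E)^{-1}=\frac{D+E}{\abs{D+E}}\cdot\abs{D+E}\cdot(D+E)^{-2}$, which does not respect the Sobolev mapping properties actually available. First, $(D+E)^{-2}$ does \emph{not} map $L^2$ into $H^2$: the domain of $(D+E)^2$ is $\{u\in H^1:\ Du+Eu\in H^1\}$, and since $E$ is merely $L^2$-bounded, $Eu$ need not lie in $H^1$; hence this domain is not $H^2$ and your claimed bounded operator $(1+D^2)(D+E)^{-2}$ does not exist. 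For the same reason your Neumann series contains the terms $DE$ and $ED$, which are not well defined (let alone bounded) on $L^2$; and even after rearranging, there is no smallness parameter forcing convergence --- the Neumann series used elsewhere in the paper (proof of Lemma \ref{lem:fDC_loc}) relies on a large spectral shift $\lambda i$, which is absent here. Second, in your chain $H^1\leftarrow L^2\leftarrow H^2\leftarrow L^2$ the leftmost factor $\frac{D+E}{\abs{D+E}}$ must act as a bounded (finite-propagation-approximable, pseudolocal) operator $L^2\to H^1$; but it is an order-zero operator and gains no regularity, and nothing in the paper controls it in the $H^1\to H^1$ norm either. So your composite never acquires, in a controlled way, the one degree of regularity that the statement requires.

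The paper sidesteps both problems with the identity
\begin{equation*}
(D+E)^{-1}=(D+i)(1+D^2)^{-1}\left((D+E-i)(D+E)^{-1}-E(D+E)^{-1}\right),
\end{equation*}
which confines the entire regularity gain to \emph{unperturbed} factors: $(1+D^2)^{-1}\colon L^2\to H^2$ (Proposition \ref{prop:finpro}) followed by the differential operator $D+i\colon H^2\to H^1$ (propagation zero, pseudolocal by Rellich), while all dependence on $E$ sits in order-zero factors $(D+E-i)(D+E)^{-1}\in D^*$ and $E(D+E)^{-1}\in C^*$ that Lemma \ref{lem:ancon} handles purely in the $L^2\to L^2$ norm. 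If you want to keep your approach you must find a factorization with this feature; as written, the middle of your chain collapses.
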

\begin{proof}
 {Write
  \begin{equation*}
(D+E)^{-1} = 
  (D+i)(1+D^2)^{-1}\left((D+E-i)(D+E)^{-1}-E(D+E)^{-1}\right).
\end{equation*}
Here, $(D+E-i)(D+E)^{-1}\colon L^2\to L^2$ is, by Lemma \ref{lem:ancon}, an element in $D^*$, thus 
a norm limit of equivariant finite propagation pseudolocal operators
$h_\epsilon$. Similarly $(D+E)^{-1}\colon L^2\to L^2$ and therefore
$E(D+E)^{-1}\colon L^2\to L^2$ are elements in $C^*$,
and so the latter is a norm limit of 
equivariant finite propagation locally compact operators
$\lambda_\epsilon$. Next,
$D+i\colon H^2\to H^1$ is bounded with propagation $0$ and has the property that
$[\phi,D+i]\colon H^2\to H^1$ is compact by the Rellich lemma. Finally, by
Proposition  \ref{prop:finpro} above,
$(1+D^2)^{-1}\colon L^2\to H^2$ is the norm limit of equivariant finite
propagation pseudolocal
operators $G_\epsilon \colon L^2\to H^2$. Using the derivation property of $[\phi,\cdot]$ 
and approximating $(D+E)^{-1} $ by $A_\epsilon:=(D+i) G_\epsilon \left( h_\epsilon - \lambda_\epsilon \right)$
we easily conclude the proof.}
\end{proof}

\begin{corollary}
  If $E\colon L^2(M)\to L^2(M)$ is a bounded self-adjoint equivariant
  operator and a
  norm limit of equivariant finite propagation
  operators and $D$ is a Dirac type operator on $M$ as above such that $D+E$
  is invertible, 
  then the operator $\frac{\abs{D+E}+\pa_t}{D+E-\pa_t}\colon L^2\to L^2$
  belongs to $D^* (M\times \RR)^\Gamma$.
\end{corollary}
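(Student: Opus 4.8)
The natural route is to write
\begin{equation*}
  \frac{\abs{D+E}+\pa_t}{D+E-\pa_t}=\bigl(\abs{D+E}+\pa_t\bigr)\circ(D+E-\pa_t)^{-1},
\end{equation*}
where $(D+E-\pa_t)^{-1}$ is bounded from $L^2(M\times\RR)$ to $H^1(M\times\RR)$ and $\abs{D+E}+\pa_t$ is bounded from $H^1(M\times\RR)$ to $L^2(M\times\RR)$, and then to exhibit each of the two factors as a norm limit of $\Gamma$-equivariant finite propagation operators whose commutators with compactly supported functions are compact --- in the appropriate $H^k\to H^l$ sense for the factor in question. Since for $F\colon H^1\to L^2$ and $A\colon L^2\to H^1$ one has $[\phi,FA]=[\phi,F]A+F[\phi,A]$, and a compact operator composed with a bounded one is compact, this property is stable under the composition: the composite is then a norm limit of equivariant finite propagation operators which is pseudolocal as a map $L^2(M\times\RR)\to L^2(M\times\RR)$, that is, an element of $D^*(M\times\RR)^\Gamma$. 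Propositions \ref{prop:D}, \ref{prop:partial_H1}, \ref{prop:DE}, \ref{prop:finpro} and \ref{prop:finproful} were designed precisely to supply the two factors in this perturbed situation, where $E$ is merely a bounded $L^2$-operator.

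The second factor $\abs{D+E}+\pa_t\colon H^1(M\times\RR)\to L^2(M\times\RR)$ is the easy one: $\pa_t$ is a first order operator, hence bounded $H^1\to L^2$ and of propagation $0$, while $[\phi,\pa_t]$ is multiplication by a compactly supported function and so is compact $H^1\to L^2$ by the Rellich lemma (cf.\ Proposition \ref{prop:D}); and $\abs{D+E}$, regarded as a map $H^1(M\times\RR)\to L^2(M\times\RR)$, is by Proposition \ref{prop:DE} a norm limit of equivariant finite propagation operators $F_\epsilon$ with $[F_\epsilon,\phi]\colon H^1\to L^2$ compact.

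For the first factor I would self-adjointise: on $M\times\RR$, in the $\ZZ/2$-grading of the product bundle (two copies of the Clifford bundle over $M$), consider
\begin{equation*}
  \overline D+\mathcal E=\begin{pmatrix} 0 & (D+E)-\pa_t\\ (D+E)+\pa_t & 0\end{pmatrix},
\end{equation*}
where $\overline D$ is the genuine product Dirac-type operator and $\mathcal E$ the constant-in-$t$ bounded $\Gamma$-equivariant extension of $E$. Then $\overline D+\mathcal E$ is self-adjoint, $(\overline D+\mathcal E)^2=\bigl((D+E)^2-\pa_t^2\bigr)\cdot\id$, and since $\spec(D+E)\cap(-\epsilon,\epsilon)=\emptyset$ for some $\epsilon>0$ this square is $\ge\epsilon^2$; hence $\overline D+\mathcal E$ is invertible and $(D+E-\pa_t)^{-1}$ is exactly the off-diagonal entry $p_-(\overline D+\mathcal E)^{-1}p_+$ of its inverse, where $p_\pm$ are the grading projections. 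Proposition \ref{prop:finproful}, applied to the invertible Dirac-type-plus-bounded-perturbation $\overline D+\mathcal E$ on the product $M\times\RR$ --- its proof uses only Propositions \ref{prop:finpro}, \ref{prop:D} and the Rellich lemma, all available on $M\times\RR$ --- presents $(\overline D+\mathcal E)^{-1}\colon L^2\to H^1(M\times\RR)$ as a norm limit of equivariant finite propagation operators $A_\epsilon$ with $[\phi,A_\epsilon]\colon L^2\to H^1$ compact; compressing by $p_\pm$, which have propagation $0$ and commute with every $\phi$, yields the same statement for $(D+E-\pa_t)^{-1}$. Composing with the second factor completes the argument.

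The main obstacle is precisely this control of $(D+E-\pa_t)^{-1}$ --- equivalently of $(\overline D+\mathcal E)^{-1}$ --- on the product: because the perturbation is not pseudodifferential one cannot invoke elliptic regularity, so both the finite propagation approximations and, above all, the compactness of the commutators with compactly supported functions must be produced, and then carried across the various Sobolev spaces appearing in the compositions, purely by the abstract functional calculus of unbounded operators together with the Rellich lemma. This is exactly what Propositions \ref{prop:finpro}, \ref{prop:finproful} and \ref{prop:DE} were built to deliver; once they are in hand the only remaining points --- that compression to a block and composition along the scale $L^2\to H^1\to L^2$ preserve the property ``norm limit of equivariant finite propagation operators with compact $\phi$-commutators'', and hence membership in $D^*(M\times\RR)^\Gamma$ --- are careful but routine bookkeeping.
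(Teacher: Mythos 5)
Your proposal is correct and follows essentially the same route as the paper: both factor the operator through $H^1(M\times\RR)$, realise $(D+E-\pa_t)^{-1}$ as a block of the inverse of the self-adjoint matrix operator on the product so that Proposition \ref{prop:finproful} (applied with $M\times\RR$ in place of $M$) gives the finite-propagation, pseudolocal approximation of the resolvent, and then handle $\abs{D+E}$ via Proposition \ref{prop:DE} and $\pa_t$ via the Rellich lemma. The only cosmetic difference is that the paper writes the result as the sum $\frac{\abs{D+E}}{D+E-\pa_t}+\frac{\pa_t}{D+E-\pa_t}$ rather than as a single composition, which amounts to the same bookkeeping.
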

\begin{proof}
{We must prove that $\frac{\abs{D+E}+\pa_t}{D+E-\pa_t}$ is the norm limit
of a sequence of equivariant bounded operators $H_\epsilon:L^2 \to L^2$ that
are of finite propagation and pseudolocal. To check pseudolocality, by a
density argument it suffices to consider the commutator with compactly
supported smooth functions.}
We choose $N=\RR$ in the previous propositions. We observe that
  $(D+E-\pa_t)$ is a summand of the perturbed Dirac type operator $\left(
    \begin{smallmatrix}
      0 & D+E-\pa_t \\ D+E+\pa_t & 0
    \end{smallmatrix}\right)$ 
on $M\times \reals$;
  thus $(D+E-\pa_t)^{-1}\colon L^2\to H^1$ is a {norm limit of
    equivariant operators  
  $A_\epsilon\colon L^2\to H^1$ that are of
  finite propagation 
  operators and
  commute up to compact operators with multiplication by compactly supported
smooth  functions. Here we have used Proposition \ref{prop:finproful} with
$M\times \RR$   instead of $M$.
By Proposition \ref{prop:DE}, 
$\abs{D+E}$, as map from
  $H^1(M\times \reals)\to L^2(M\times \reals)$, is a limit of bounded
  equivariant  operators $F_\epsilon\colon 
  H^1(M\times \reals)\to L^2(M\times \reals)$ that are of finite propagation and such that $[F_\epsilon,\phi]
  \colon
  H^1(M\times \reals)\to L^2(M\times \reals)$
   is compact. Finally, $\pa_t: H^1\to L^2$ has propagation zero and $[\phi,\pa_t]$  is clearly
   compact as a map $H^1\to L^2$ (Rellich Lemma). Summarizing, by writing
   $$\frac{\abs{D+E}+\pa_t}{D+E-\pa_t} = \frac{\abs{D+E}}{D+E-\pa_t} + \frac{\pa_t}{D+E-\pa_t}$$
   we see that $\frac{\abs{D+E}+\pa_t}{D+E-\pa_t}$ is a sum of two elements in  $D^* (M\times \RR)^\Gamma$
   and it is therefore 
   in  $D^* (M\times \RR)^\Gamma$, as required.}
\end{proof}

\bibliography{surgery}
\bibliographystyle{plain}

\end{document}